%%%%%%%%%%%%%%%%%%%%%%%%%%%%%%%%%%%%%%%%%%%%%%%%%%%%%%%%%%%%%%%%%%%%%%%
%%
%% This file contains the preprint by Meinolf Geck (jan 2016)
%% ``On irreducible characaters and character sheaves for small primes"
%%
%\documentclass{amsart}
\documentclass[12pt]{amsart}
\marginparwidth 0pt \oddsidemargin 0pt \evensidemargin 0pt \marginparsep 0pt
\topmargin 0pt \textwidth 5.8in \textheight 9.3 in

\usepackage{amsmath,amssymb}
\usepackage{calrsfs}
\usepackage{url}
\overfullrule 5pt

\newcommand{\A}{{\mathbb{A}}}
\newcommand{\Z}{{\mathbb{Z}}}
\newcommand{\G}{{\mathbb{G}}}
\newcommand{\D}{{\mathbb{D}}}
\newcommand{\Q}{{\mathbb{Q}}}

\newcommand{\C}{{\mathbb{C}}}
\newcommand{\F}{{\mathbb{F}}}
\newcommand{\T}{{\mathbb{T}}}

\newcommand{\fX}{{\mathfrak{X}}}
\newcommand{\fU}{{\mathfrak{U}}}

\newcommand{\bq}{{\mathbf{q}}}
\newcommand{\bA}{{\mathbf{A}}}
\newcommand{\bB}{{\mathbf{B}}}
\newcommand{\cE}{{\mathcal{E}}}
\newcommand{\cC}{{\mathcal{C}}}
\newcommand{\cG}{{\mathcal{G}}}
\newcommand{\cH}{{\mathcal{H}}}
\newcommand{\cO}{{\mathcal{O}}}
\newcommand{\cP}{{\mathcal{P}}}
\newcommand{\cS}{{\mathcal{S}}}
\newcommand{\cT}{{\mathcal{T}}}
\newcommand{\cW}{{\mathcal{W}}}
\newcommand{\tB}{{\tilde{B}}}
\newcommand{\tG}{{\tilde{G}}}
\newcommand{\tT}{{\tilde{T}}}
\newcommand{\tF}{{\tilde{F}}}
\newcommand{\tZ}{{\tilde{Z}}}
\newcommand{\trho}{{\tilde{\rho}}}
\newcommand{\tta}{{\tilde{\eta}}}
\newcommand{\ttheta}{{\tilde{\theta}}}

\newcommand{\Aut}{{\operatorname{Aut}}}
\newcommand{\Ind}{{\operatorname{Ind}}}
\newcommand{\AV}{{\operatorname{AV}}}
\newcommand{\Irr}{{\operatorname{Irr}}}
\newcommand{\IBr}{{\operatorname{IBr}}}
\newcommand{\GL}{{\operatorname{GL}}}
\newcommand{\SL}{{\operatorname{SL}}}
\newcommand{\id}{{\operatorname{id}}}
\newcommand{\Gad}{G_{\operatorname{ad}}}
\newcommand{\Gder}{G_{\operatorname{der}}}
\newcommand\Rst{{^*\!R}}
\renewcommand{\leq}{\leqslant}
\renewcommand{\geq}{\geqslant}

\newtheorem{thm}{Theorem}[section]
\newtheorem{cor}[thm]{Corollary}
\newtheorem{lem}[thm]{Lemma}
\newtheorem{prop}[thm]{Proposition}

\theoremstyle{definition}
\newtheorem{defn}[thm]{Definition}
\newtheorem{exmp}[thm]{Example}
\newtheorem{rema}[thm]{}
\theoremstyle{remark}
\newtheorem{rem}[thm]{Remark}
\numberwithin{equation}{section}

\begin{document}

%\title[Irreducible characters and character sheaves]{On the irreducible 
%characters and character sheaves for reductive groups over finite fields}
\title[A first guide to the character theory of finite groups of Lie type]{A 
first guide to the character theory of finite groups of Lie type}
\author{Meinolf Geck}
\address{IAZ - Lehrstuhl f\"ur Algebra\\Universit\"at Stuttgart\\ 
Pfaffenwaldring 57\\D--70569 Stuttgart\\ Germany}
\email{meinolf.geck@mathematik.uni-stuttgart.de}

\subjclass{Primary 20C33; Secondary 20G40}
\keywords{Finite groups of Lie type, unipotent characters, character 
sheaves} 
\date{May 14, 2017}
%\dedicatory{$\mbox{}$} 

\begin{abstract} This survey article is an introduction to some of Lusztig's 
work on the character theory of a finite group of Lie type $G(\F_q)$, where
$q$ is a power of a prime~$p$. It is partly based on two series of lectures 
given at the Centre Bernoulli (EPFL) in July 2016 and at a summer school in 
Les Diablerets in August 2015. Our focus here is on questions related to 
the parametrization of the irreducible characters and on results which 
hold without any assumption on~$p$ or~$q$.
\end{abstract}

\maketitle

\begin{center}
\begin{tabular}{lp{270pt}r} \multicolumn{3}{c}{\bf Contents} \\
1 & Introduction \dotfill & \pageref{sec0}\\
2 & The virtual characters of Deligne and Lusztig\dotfill & \pageref{secdl}\\
3 & Order and degree polynomials \dotfill & \pageref{secdeg}\\
4 & Parametrization of unipotent characters \dotfill & \pageref{secuni}\\
5 & Jordan decomposition (connected centre) \dotfill & \pageref{secj}\\
6 & Regular embeddings \dotfill & \pageref{secj1}\\
7 & Character sheaves \dotfill & \pageref{seccs}\\
8 & Appendix: On uniform functions \dotfill & \pageref{secapp}
\end{tabular}
\end{center}

%%%%%%%%%%%%%%%%%%%%%%%%%%%%%%%%%%%%%%%%%%%%%%%%%%%%%%%%%%%%%%%%%%%%%%%
\section{Introduction} \label{sec0}

According to Aschbacher \cite{As}, when faced with a problem about 
finite groups, it seems best to attempt to reduce the problem or a 
related problem to a question about simple groups or groups closely 
related to simple groups. The classification of finite simple groups 
then supplies an explicit list of groups which can be studied in 
detail using the effective description of the groups. In recent years,
this program has led to substantial advances on various long-standing open 
problems in the representation theory of finite groups; see, for example,
Malle's survey \cite{malle17}. The classification of finite simple groups 
itself highlights the importance of studying ``finite groups of Lie type'', 
which are the subject of our survey. 

So let $p$ be a prime and $k=\overline{\F}_p$ be an algebraic closure of 
the field with $p$ elements. Let $G$ be a connected reductive algebraic
group over $k$ and assume that $G$ is defined over the 
finite subfield $\F_q\subseteq k$, where $q$ is a power 
of~$p$. Let $F\colon G\rightarrow G$ be the corresponding Frobenius map.
Then the group of rational points $G^F=G(\F_q)$ is called a ``finite 
group of Lie type''. (For the basic theory of these groups, see \cite{C2}, 
\cite{mybook}, \cite{MaTe}, \cite{St67}.) We are interested in finding out 
as much information as possible about the complex irreducible 
characters of~$G^F$. 

In Section~\ref{secdl} we begin by recalling  some basic results about the 
virtual characters $R_{T,\theta}$ of Deligne-Lusztig \cite{DeLu}. In 
Section~\ref{secdeg} we explain the fact that the order of $G^F$ and 
the degrees of the irreducible characters of $G^F$ can be obtained by
evaluating certain polynomials at~$q$. The ``unipotent''
characters of $G^F$ form a distinguished subset of the set of 
irreducible characters of $G^F$. In Section~\ref{secuni} we describe, 
following Lusztig \cite{L10a}, a canonical bijection between the unipotent 
characters and a certain combinatorially defined set which only depends on 
the Weyl group $W$ of $G$ and the action of $F$ on~$W$. In 
Section~\ref{secj}, we expose some basic results from Lusztig's book
\cite{L1}, assuming that the centre $Z(G)$ is connected. The ``regular 
embeddings'' in Section~\ref{secj1} provide a technique to transfer results
from the connected centre case to the general case; see \cite{Lu5}, 
\cite{Lu08a}. Taken together, one obtains a full classification of the 
irreducible characters of $G^F$ (without any condition on $Z(G)$) 
including, for example, explicit formulae for the character degrees. 
Finally, in Section~\ref{seccs}, we discuss some basic features of
Lusztig's theory of character sheaves \cite{L2}. In \cite{L10}, Lusztig
closed a gap in this theory which now makes it possible to state a number
of results without any condition on $p$ or~$q$. 

As an application, and in response to a question from Pham Huu Tiep, we 
will then show that the results in \cite[\S 6]{gehi2} on the number of
unipotent $\ell$-modular Brauer characters of $G^F$ (for primes $\ell
\neq p$) hold unconditionally. In an appendix we show that, with all the 
methods available nowadays, it is relatively straightforward to settle 
an old conjecture of Lusztig \cite{Lu2} on ``uniform'' functions.

Our main references for this survey are, first of all, Lusztig's book 
\cite{L1}, and then the monographs by Carter \cite{C2}, Digne--Michel 
\cite{DiMi2}, Cabanes--Enguehard \cite{CE} and Bonnaf\'e \cite{Bo3} (in
chronological order). These five volumes contain a wealth of ideas, 
theoretical results and concrete data about characters of finite groups 
of Lie type. As far as character sheaves are concerned, we mostly rely on 
the original source \cite{L2}. We also recommend Lusztig's lecture 
\cite{L11} for an overview of the subject, as well as Shoji's older survey 
\cite{S5a}. The aim of the ongoing book project \cite{gema} is to provide a 
guided tour to this vast territory, where new areas and directions keep 
emerging; see, e.g., Lusztig's recent papers \cite{L12}, \cite{L13}. It is 
planned that a substantially expanded version of this survey will appear 
as Chapter~2 of~\cite{gema}.

\begin{rema} {\bf Notation.} \label{abs11} We denote by $\mbox{CF}(G^F)$ the 
vector space of complex valued functions on $G^F$ which are constant on the 
conjugacy classes of $G^F$. Given $f,f'\in \mbox{CF}(G^F)$, we denote by 
$\langle f,f' \rangle=|G^F|^{-1}\sum_{g \in G^F} f(g)\overline{f'(g)}$ the 
standard scalar product of $f,f'$ (where the bar denotes complex 
conjugation). Let $\Irr(G^F)$ be the set of complex irreducible characters 
of $G^F$; these form an orthonormal basis of $\mbox{CF}(G^F)$ with respect 
to the above scalar product. In the framework of \cite{DeLu}, \cite{L2}, 
class functions are constructed whose values are algebraic numbers in
$\overline{\Q}_\ell$ where $\ell\neq p$ is a prime. By choosing an embedding 
of the algebraic closure of $\Q$ in $\overline{\Q}_\ell$  into $\C$, we will 
tacitly regard these class functions as elements of $\mbox{CF}(G^F)$.
\end{rema}

Also note that we do assume that $G$ is defined over $\F_q$ and so we exclude
the Suzuki and Ree groups from the discussion. This certainly saves us from 
additional technical complications in the formulation of some results; it 
also makes it easier to give precise references. 
%(For example, it is not 
%obvious if all the results on character sheaves in Section~\ref{seccs} do 
%hold for endomorphisms $F\colon G \rightarrow G$ which are more general than 
%Frobenius maps.) 
Note that, in most applications to finite group theory (as 
mentioned above), the Suzuki and Ree groups can be regarded as ``sporadic 
groups'' and be dealt with separately. 

%%%%%%%%%%%%%%%%%%%%%%%%%%%%%%%%%%%%%%%%%%%%%%%%%%%%%%%%%%%%%%%%%%%%%%%
\section{The virtual characters of Deligne and Lusztig} \label{secdl}

Let $G,F,q$ be as in the introduction. The framework for dealing with 
questions about the irreducible characters of $G^F$ is provided by the 
theory originally developed by Deligne and Lusztig~\cite{DeLu}. In this
set-up, one associates a virtual character $R_{T,\theta}$ of $G^F$ to 
any pair $(T,\theta)$ where $T\subseteq G$ is an $F$-stable maximal 
torus and $\theta\in\Irr(T^F)$. An excellent reference for the definition
and basic properties is Carter's book \cite{C2}: orthogonality relations, 
dimension formulae and further character relations can all be found in 
\cite[Chap.~7]{C2}. We shall work here with a slightly different (but 
equivalent) model of $R_{T,\theta}$. (There is nothing new about this: it 
is already contained in \cite[1.9]{DeLu}; see also \cite[3.3]{Lu2}.) 

\begin{rema} \label{abs21}
Throughout, we fix an $F$-stable Borel subgroup $B_0\subseteq G$ and 
write $B_0=U_0 \rtimes T_0$ (semidirect product) where $U_0$ is the 
unipotent radical of $B_0$ and $T_0$ is an $F$-stable maximal torus. Let
$N_0:=N_G(T_0)$ and $W:=N_0/T_0$ be the corresponding Weyl group, with set
$S$ of simple reflections determined by $B_0$; let $l\colon W\rightarrow 
\Z_{\geq 0}$ be the corresponding length function. Then $(B_0,N_0)$ is a 
split $BN$-pair in $G$. Now $F$ induces an automorphism $\sigma\colon W
\rightarrow W$ such that $\sigma(S)=S$. By taking fixed points under~$F$, 
we also obtain a split $BN$-pair $(B_0^F, N_0^F)$ in the finite group $G^F$, 
with corresponding Weyl group $W^\sigma=\{w\in W\mid \sigma(w)=w\}$. (See 
\cite[\S 4.2]{mybook}.) If $w\in W$, then $\dot{w}$ always denotes a 
representative of $w$ in $N_G(T_0)$.

One advantage of the model of $R_{T,\theta}$ that we will now introduce
is that everything is defined with respect to the fixed pair $(B_0,T_0)$.
\end{rema}

\begin{rema} \label{abs22} For $w\in W$ we set 
$Y_{\dot{w}}:=\{x\in G\mid x^{-1}F(x)\in \dot{w}U_0\}\subseteq G$. Then
$Y_{\dot{w}}$ is a closed subvariety which is stable under left 
multiplication by elements of $G^F$. Now consider the subgroup 
\[ T_0[w]:=\{t\in T_0\mid F(t)=\dot{w}^{-1}t\dot{w}\}\subseteq T_0.\]
(Note that this does not depend on the choice of the representative 
$\dot{w}$.) One easily sees that $T_0[w]$ is a finite group; see, e.g.,
Remark~\ref{remrt} below. We check that $Y_{\dot{w}}$ is also stable under 
right multiplication by elements of $T_0[w]$. Indeed, let $t\in T_0[w]$ and 
$x\in Y_{\dot{w}}$. Then $F(t)=\dot{w}^{-1}t \dot{w}$ and so 
\[ (xt)^{-1}F(xt)=t^{-1}x^{-1}F(x)F(t)\in t^{-1}\dot{w}U_0F(t)=
\dot{w}\bigl(\dot{w}^{-1}t^{-1}\dot{w}U_0\dot{w}^{-1}t\dot{w}\bigr)=
\dot{w}U_0\]
since $T_0$ normalizes $U_0$. Consequently, by the same argument as
in \cite[\S 7.2]{C2}, a pair $(g,t)\in G^F\times T_0[w]$ induces linear 
maps on the $\ell$-adic cohomology spaces with compact support $H_c^i
(Y_{\dot{w}})$ ($i\in\Z$); furthermore, if $\theta \in \Irr(T_0[w])$, 
then we obtain a virtual character $R_w^\theta$ of $G^F$ by setting
\[ R_w^\theta(g):=\frac{1}{|T_0[w]|}\sum_{t \in T_0[w]}\sum_{i\in\Z}
(-1)^i \mbox{Trace}\bigl((g,t),H_c^i(Y_{\dot{w}})\bigr)\theta(t)\qquad 
(g\in G^F).\]
\end{rema}

\begin{rem} \label{remrt} Let $w\in W$. By Lang's Theorem (see, e.g.,
\cite[\S 1.17]{C2}), we can write $\dot{w}=g^{-1}F(g)$ for some $g\in G$.
Then $T:=gT_0g^{-1}$ is an $F$-stable maximal torus and $T_0[w]=g^{-1}T^F
g$. (In particular, $T_0[w]$ is finite.) Thus, $T$ is a torus ``of 
type $w$'' and may be denoted by~$T_w$. We define $\underline{\theta}\in
\Irr(T_w^F)$ by $\underline{\theta}(t):=\theta(g^{-1}tg)$ for $t\in 
T_w^F$. Then one checks that $R_w^\theta$ equals $R_{T_w,
\underline{\theta}}$ (as defined in \cite[\S 7.2]{C2}, \cite[2.2]{Lu2}) 
and $R_w^\theta$ does not depend on the choice of the 
representative~$\dot{w}$. (See, e.g., \cite[4.5.6]{mybook}.)
\end{rem}

\begin{exmp} \label{exprt} Let $w=1$. Then we can take $\dot{w}=g=1$ and
so $T_0[1]=T_0^F$. Since $F(U_0)=U_0$, Lang's Theorem implies that 
$Y_{\dot{1}}=\{yu\mid y\in G^F, u\in U_0\}$; furthermore, we obtain 
a surjective morphism $Y_{\dot{1}}\rightarrow (G/B_0)^F$, $x\mapsto xB_0$,
which is compatible with the actions of $G^F$ by left multiplication on
$Y_{\dot{1}}$ and on $(G/B_0)^F$. Let $Z_1,\ldots,Z_n$ be the fibres of 
this morphism, where $Z_1$ is the fibre of $B_0$ and $n=|(G/B_0)^F|$. Then
$G^F$ permutes these fibres transitively and the stabiliser of $Z_1$ is 
$B_0^F$. By one of the basic properties of $\ell$-adic cohomology listed 
in \cite[\S 7.1]{C2}, this already shows that $R_1^\theta$ is obtained
by usual induction from a virtual character of~$B_0^F$. Now $B_0^F=U_0^F
\rtimes T_0^F$ and so there is a canonical homomorphism $\pi\colon B_0^F
\rightarrow T_0^F$ with kernel $U_0^F$. Hence, if $\theta\in\Irr(T_0^F)$, 
then $\theta \circ \pi\in \Irr(B_0^F)$ and one further shows that 
(see the proof of \cite[7.2.4]{C2} for details):
\[R_1^\theta=\Ind_{B_0^F}^{G^F}\bigl(\theta\circ \pi\bigr).\]
So, in this case ($w=1$), $R_1^\theta$ is just ordinary induction of 
characters. Thus, in general, one may call $R_w^\theta$ ``cohomological
induction''.
\end{exmp}

\begin{prop} \label{dimform} We have $R_w^\theta(1)=
(-1)^{l(w)} q^{-N}|G^F:T_0[w]|$, where $N$ denotes the number of 
reflections in $W$. 
\end{prop}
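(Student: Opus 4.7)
The plan is to deduce Proposition \ref{dimform} from the classical Deligne-Lusztig dimension formula, using the identification $R_w^\theta = R_{T_w,\underline{\theta}}$ recalled in Remark \ref{remrt}. That remark provides an $F$-stable maximal torus $T_w$ of type $w$ with $T_w^F \cong T_0[w]$, so evaluating $R_w^\theta(1)$ reduces to evaluating $R_{T_w,\underline{\theta}}(1)$.

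For this I would invoke the standard dimension formula (see, e.g., \cite[Thm.~7.5.1]{C2}):
\[
R_{T_w,\underline{\theta}}(1) \;=\; \epsilon_G\,\epsilon_{T_w}\cdot\frac{|G^F|_{p'}}{|T_w^F|},
\]
where $\epsilon_H := (-1)^{\F_q\text{-rank of }H}$ for a reductive group or torus $H$. Since $U_0$ is an $\F_q$-variety of dimension $N$ (as $N$ equals the number of positive roots), we have $|U_0^F| = q^N$; because $U_0^F$ is a Sylow $p$-subgroup of $G^F$, this gives $|G^F|_{p'} = q^{-N}|G^F|$. Combining with $|T_w^F| = |T_0[w]|$ yields
\[
R_w^\theta(1) \;=\; \epsilon_G\,\epsilon_{T_w}\cdot q^{-N}\cdot\frac{|G^F|}{|T_0[w]|}.
\]

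It then remains to identify the sign $\epsilon_G\,\epsilon_{T_w}$ with $(-1)^{l(w)}$, and this is where the only real work lies. Setting $V := X_*(T_0)\otimes_\Z\R$, the geometric Frobenius acts on $V$ as $q\sigma$ through $T_0$ and as $q\sigma w$ through the twist $T_w$, so that $\epsilon_G = (-1)^{\dim V^{\sigma}}$ and $\epsilon_{T_w} = (-1)^{\dim V^{\sigma w}}$ (the $\F_q$-rank being the dimension of the $q$-eigenspace, equivalently the $1$-eigenspace of the linear part). Using that $\det$ on the reflection representation of $W$ is the sign character $(-1)^{l(\cdot)}$ and a short bookkeeping with finite-order orthogonal transformations, one verifies $\dim V^{\sigma} \equiv \dim V^{\sigma w} + l(w) \pmod{2}$, which finishes the argument. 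I expect this last parity computation to be the only subtle point of the proof; it is entirely standard and carried out in detail in, e.g., \cite[Prop.~3.3.5]{C2}.
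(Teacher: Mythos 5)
Your proposal follows exactly the same route as the paper: rewrite $R_w^\theta$ as $R_{T_w,\underline{\theta}}$ via Remark~\ref{remrt}, apply Carter's dimension formula, and observe that $q^N$ is the $p$-part of $|G^F|$. The only difference is that you invoke \cite[Thm.~7.5.1]{C2} for the version with the sign $\epsilon_G\epsilon_{T_w}$ and then verify $\epsilon_G\epsilon_{T_w}=(-1)^{l(w)}$ by a parity computation, whereas the paper simply cites both \cite[7.5.1 and 7.5.2]{C2}, the second reference being precisely the corollary that resolves the sign as $(-1)^{l(w)}$. Your parity argument ($\det$ of $w$ on the reflection representation is $(-1)^{l(w)}$, and for a finite-order real orthogonal map the determinant equals $(-1)$ to the codimension of the fixed space up to an even correction) is correct in substance, though your citation \cite[Prop.~3.3.5]{C2} is not the right place for it (3.3.5 gives the order formula $|T_w^F|=\det_V(q-\sigma w)$, not the sign identity); the reference you want is \cite[7.5.2]{C2}. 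Also be slightly careful that the $\F_q$-rank is computed from the full linear part $\varphi_0$ (or $q^{-1}F$) on the (co)character lattice, not merely from $\sigma\in\Aut(W)$, though this does not affect the parity bookkeeping.
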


\begin{proof} Write $R_w^\theta=R_{T_w,\underline{\theta}}$ as in 
Remark~\ref{remrt}. By \cite[7.5.1 and 7.5.2]{C2}, we have $R_{T_w,
\underline{\theta}}(1)=(-1)^{l(w)}|G^F: T_w^F|_{p'}$. It remains to note 
that $q^N$ is the $p$-part of $|G^F|$. (See also the formula for $|G^F|$ in
\ref{abs31} below.)
\end{proof}

Given $w,w'\in W$, we denote $N_{W,\sigma}(w,w'):=\{x\in W\mid xw
\sigma(x)^{-1}=w'\}$. Then a simple calculation shows that 
$\dot{x}T_0[w]\dot{x}^{-1}=T_0[w']$ if $x \in N_{W,\sigma}(w,w')$. 

\begin{prop} \label{scform} Let $w,w'\in W$, $\theta\in \Irr(T_0[w])$ and
$\theta'\in \Irr(T_0[w'])$. Then 
\[\langle R_w^\theta,R_{w'}^{\theta'}\rangle=|\{x\in N_{W,\sigma}(w,w')\mid 
\theta(t)=\theta'(\dot{x}t\dot{x}^{-1})\mbox{ for all $t \in T_0[w]$}\}|.\]
In particular, $R_w^\theta$ and $R_{w'}^{\theta'}$ are either equal or
orthogonal to each other.
\end{prop}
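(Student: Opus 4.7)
The plan is to reduce the assertion to the classical orthogonality formula for Deligne--Lusztig virtual characters in the torus-indexed model, and then translate the resulting index set back into the $w$-indexed form. By Remark~\ref{remrt} we may pick $g, g' \in G$ with $g^{-1}F(g) = \dot{w}$ and $(g')^{-1}F(g') = \dot{w}'$, set $T_w := gT_0g^{-1}$ and $T_{w'} := g'T_0(g')^{-1}$, and transport $\theta, \theta'$ to characters $\underline{\theta} \in \Irr(T_w^F)$ and $\underline{\theta'} \in \Irr(T_{w'}^F)$ via the isomorphisms $T_0[w] \cong T_w^F$, $T_0[w'] \cong T_{w'}^F$. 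The classical formula (see \cite[7.3.4]{C2}) then gives
\begin{equation*}
\langle R_w^\theta, R_{w'}^{\theta'}\rangle = \frac{1}{|T_w^F|}\,\bigl|\bigl\{h \in G^F : hT_wh^{-1} = T_{w'},\ \underline{\theta'}(h\tau h^{-1}) = \underline{\theta}(\tau)\ \forall \tau \in T_w^F\bigr\}\bigr|.
\end{equation*}

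The central step is to parametrize these conjugators. Any $h$ with $hT_wh^{-1} = T_{w'}$ satisfies $(g')^{-1}hg \in N_G(T_0)$, so we may write $(g')^{-1}hg = \dot{x}\,s$ uniquely with $x \in W$ and $s \in T_0$. A direct computation using $F(g) = g\dot{w}$ and $F(g') = g'\dot{w}'$ shows that $F(h) = h$ forces, on the level of $W$, the identity $\sigma(x) = (w')^{-1}xw$, i.e.\ $x \in N_{W,\sigma}(w,w')$; given this, the remaining condition on $s$ is a twisted Lang equation on the connected torus $T_0$, whose set of solutions is, by Lang's Theorem, a single coset of $T_0[w]$ and hence has cardinality $|T_0[w]| = |T_w^F|$. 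Thus the factor $|T_w^F|^{-1}$ is absorbed, leaving a sum indexed by $x \in N_{W,\sigma}(w,w')$.

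For the character condition, if $h = g'\dot{x}sg^{-1}$ and $\tau = gug^{-1}$ with $u \in T_0[w]$, then a short manipulation in the abelian torus yields $h\tau h^{-1} = g'(\dot{x}u\dot{x}^{-1})(g')^{-1}$. Using the identification $\dot{x}T_0[w]\dot{x}^{-1} = T_0[w']$ noted before the proposition, the condition $\underline{\theta'}(h\tau h^{-1}) = \underline{\theta}(\tau)$ becomes $\theta'(\dot{x}u\dot{x}^{-1}) = \theta(u)$ for all $u \in T_0[w]$, which is independent of $s$ and matches the claim. For the \emph{in particular} clause, if the inner product is positive then some $h$ as above exists, conjugating the pair $(T_w,\underline{\theta})$ to $(T_{w'},\underline{\theta'})$; since the Deligne--Lusztig character depends only on the $G^F$-conjugacy class of such a pair, it follows that $R_w^\theta = R_{w'}^{\theta'}$.

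The main obstacle is purely bookkeeping: one has to keep track of the several layers of identifications (Weyl-group conjugacy, the twisted Lang equation on $T_0$, and the character transport $\theta \leftrightarrow \underline{\theta}$) and verify that all the twists cancel correctly, in particular that the count of admissible $s$ for each fixed $x$ is exactly $|T_w^F|$ so as to absorb the denominator in the classical formula.
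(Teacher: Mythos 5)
Your proposal is correct and follows essentially the same route as the paper: reduce to the torus-indexed model $R_{T_w,\underline{\theta}}$, apply the inner-product formula of \cite[Theorem~7.3.4]{C2}, and then translate the set of conjugators back to the $w$-indexed picture. The only difference is cosmetic: you parametrize the conjugators directly via $(g')^{-1}hg=\dot{x}s$ and absorb $|T_w^F|^{-1}$ with an explicit twisted-Lang count, whereas the paper first normalizes to $w=w'$, $g=g'$ and cites the isomorphism $N_G(T_w)^F/T_w^F\cong N_{W,\sigma}(w,w)$ from \cite[3.3.6]{C2}, where the same Lang argument is hidden.
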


%$xw=w'\sigma(x)$ so $\sigma(x)w^{-1}=w'^{-1}x$.
%
%$F(\dot{x}t\dot{x}^{-1})=F(\dot{x})\dot{w}^{-1}t\dot{w}F(\dot{x})^{-1}=
%w'^{-1}xtx^{-1}w'$.  $F(\dot{x})=t'\sigma(x)$.
\begin{proof} Write $R_w^\theta=R_{T_w,\underline{\theta}}$ where
$T_w=gT_0g^{-1}$ and $g^{-1}F(g)=\dot{w}$, as in Remark~\ref{remrt}; 
similarly, we write $R_{w'}^{\theta'}=R_{T_{w'},\underline{\theta'}}$
where $T_{w'}=g'T_0g'^{-1}$ and $g'^{-1}F(g')=\dot{w}'$.
By \cite[Theorem 7.3.4]{C2}, the above scalar product is given by 
\begin{equation*}
|\{x\in G^F\mid xT_wx^{-1}=T_{w'} \mbox{ and } \underline{\theta}'
(xtx^{-1})=\underline{\theta}(t)\mbox{ for all $t\in T_w^F$}\}|/|T_w^F|.
\tag{$*$}
\end{equation*}
Hence, if $T_w$, $T_{w'}$ are not $G^F$-conjugate, then $N_{W,\sigma}
(w,w')=\varnothing$ (see \cite[3.3.3]{C2}) and both sides of the desired 
identity are~$0$. Now assume that $T_w$, $T_{w'}$ are $G^F$-conjugate. 
%Then $R_{T_w,\underline{\theta}}=R_{T_{w'},\underline{\theta}'}$ since
%\[\langle R_{T_w,\underline{\theta}},R_{T_{w},\underline{\theta}} \rangle=
%\langle R_{T_w,\underline{\theta}},R_{T_{w'},\underline{\theta}'} \rangle=
%\langle R_{T_{w'},\underline{\theta}'},R_{T_{w'},\underline{\theta}'} 
%\rangle.\]
Then we may as well assume that $w=w'$, $T_w=T_{w'}$, $g=g'$. Then ($*$) 
equals
\[|\{nT_w^F\in N_G(T_w)^F/T_w^F\mid \underline{\theta}'(ntn^{-1})=
\underline{\theta}(t) \mbox{ for all $t\in T_w^F$}\}.\]
By \cite[3.3.6]{C2}, $N_G(T_w)^F/T_w^F$ is isomorphic to the group 
$N_{W,\sigma}(w,w)$, where the isomorphism is given by sending 
$nT_w^F$ to the coset of $x_n:=g^{-1}ng \in N_G(T_0)$ mod~$T_0$. Now, for 
any $t \in T_w^F$, we have $\underline{\theta}(t)=\theta(g^{-1}tg)$ and 
$\underline{\theta}'(ntn^{-1})=\theta'(x_ng^{-1}tgx_n^{-1})$. This yields 
the desired formula. Finally note that, if ($*$) is non-zero, then
the pairs $(T_w,\underline{\theta})$ and $(T_{w'},\underline{\theta}')$
are $G^F$-conjugate, which implies that $R_{T_w,\underline{\theta}}=
R_{T_{w'},\underline{\theta}'}$.
\end{proof}

\begin{defn} \label{unif} We say that $f\in \mbox{CF}(G^F)$ is uniform
if $f$ can be written as a linear combination of $R_w^\theta$ for
various $w,\theta$. Let 
\[\mbox{CF}_{\text{unif}}(G^F):=\{f\in \mbox{CF}(G^F)\mid \mbox{$f$ is 
uniform}\}.\]
For example, by \cite[7.5]{DeLu}, \cite[12.14]{DiMi2}, the character of 
the regular representation of $G^F$ is uniform; more precisely, that
character can be written as 
\[ \frac{1}{|W|}\sum_{w\in W} \sum_{\theta\in\Irr(T_0[w])} 
R_w^\theta(1)R_w^\theta.\]
\end{defn}

\begin{prop} \label{dim2} Let $\rho\in \Irr(G^F)$. Then 
\[\rho(1)=\frac{1}{|W|}\sum_{w\in W} \sum_{\theta\in \Irr(T_0[w])}
\langle R_w^\theta,\rho\rangle R_w^\theta(1).\]
\end{prop}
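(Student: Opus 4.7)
The plan is to read off the identity from the uniform decomposition of the regular character recorded in Definition~\ref{unif}. Recall that the regular character of $G^F$ decomposes as
\[ \operatorname{reg}_{G^F} = \sum_{\rho' \in \Irr(G^F)} \rho'(1)\,\rho', \]
so that, taking scalar products with any $\rho \in \Irr(G^F)$ and using orthonormality of $\Irr(G^F)$,
\[ \langle \operatorname{reg}_{G^F}, \rho\rangle = \rho(1). \]

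On the other hand, by the formula for the regular character recalled in Definition~\ref{unif}, we also have
\[ \operatorname{reg}_{G^F} = \frac{1}{|W|} \sum_{w \in W}\sum_{\theta \in \Irr(T_0[w])} R_w^\theta(1)\, R_w^\theta. \]
Taking $\langle -,\rho\rangle$ of both sides and using linearity of the scalar product (and the fact that the coefficients $R_w^\theta(1)/|W|$ are rational, so complex conjugation in the second argument does not affect them) yields
\[ \rho(1) = \frac{1}{|W|}\sum_{w \in W}\sum_{\theta \in \Irr(T_0[w])} R_w^\theta(1)\, \langle R_w^\theta, \rho\rangle, \]
which is exactly the claim after trivial rearrangement of the two factors in each summand.

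There is no genuine obstacle here: the entire content of the proposition is the expression for the regular character as a uniform function, which is already cited in Definition~\ref{unif} from \cite[7.5]{DeLu} and \cite[12.14]{DiMi2}. The only thing worth double-checking is that one is allowed to take $\langle R_w^\theta(1)\, R_w^\theta,\rho\rangle = R_w^\theta(1)\,\langle R_w^\theta,\rho\rangle$, which is immediate since $R_w^\theta(1) \in \Z$.
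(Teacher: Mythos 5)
Your argument is exactly the paper's: take the scalar product of $\rho$ with the uniform expression for the regular character recalled in Definition~\ref{unif}. The only (harmless) imprecision is the aside about complex conjugation in the second argument; since $\operatorname{reg}_{G^F}$ sits in the \emph{first} argument of $\langle\,\cdot\,,\rho\rangle$, pulling out the scalar $R_w^\theta(1)/|W|$ uses ordinary linearity and requires no rationality hypothesis at all.
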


\begin{proof} Just take the scalar product of $\rho$ with the above
expression for the character of the regular representation of $G^F$.
\end{proof}

\begin{rem} \label{green} Let $G_{\text{uni}}$ be the set of unipotent
elements of $G$. Let $w \in W$ and $\theta \in \Irr(T_0[w])$. If $u \in 
G_{\text{uni}}^F$, then $R_w^\theta(u) \in \Z$ and this value is independent 
of $\theta$; see \cite[7.2.9]{C2} and the remarks in \cite[\S 7.6]{C2}. 
Hence, we obtain a well-defined function $Q_w\colon G_{\text{uni}}^F
\rightarrow \Z$ such that $Q_w(u)=R_w^\theta(u)$ for $u \in G_{\text{uni}}^F$.
This is called the Green function corresponding to~$w$; see 
\cite[4.1]{DeLu}. There is a character formula (see \cite[7.2.8]{C2}) 
which expresses the values of $R_w^\theta$ in terms of $Q_w$, the 
values of $\theta$ and Green functions for various smaller groups. 
That formula immediately implies that 
\[ \sum_{\theta \in \Irr(T_0[w])} R_w^\theta(g)=\left\{\begin{array}{cl}
|T_0[w]|Q_w(g) & \quad \mbox{if $g\in G^F$ is unipotent}, \\ 0 & \quad
\mbox{otherwise}.\end{array}\right.\]
(This is also the special case $s_0=1$ in Lemma~\ref{app3} further 
below.) Using also Proposition~\ref{dimform}, we can re-write the identity in 
Proposition~\ref{dim2} as follows:
\[ \rho(1)=\frac{1}{|W|}\sum_{w\in W} |T_0[w]|\langle Q_w,\rho \rangle
Q_w(1)=\frac{1}{|W|}q^{-N}|G^F|\sum_{w\in W} (-1)^{l(w)}\langle 
Q_w,\rho \rangle,\]
where we regard $Q_w$ as a function on all of $G^F$, with value $0$
for $g \in G^F \setminus G_{\text{uni}}^F$. 
\end{rem}

\begin{rema} \label{dlgraph} By Proposition~\ref{dim2}, every irreducible
character of $G^F$ occurs with non-zero multiplicity in some $R_w^\theta$; 
furthermore, by Proposition~\ref{scform}, $R_w^\theta$ can have more than 
one irreducible constituent in general. This suggests to define a graph 
$\cG(G^F)$ as follows. It has vertices in bijection with $\Irr(G^F)$. Two 
characters $\rho \neq \rho'$ in $\Irr(G^F)$ are joined by an edge if 
there exists some pair $(w,\theta)$ such that $\langle R_w^\theta,\rho
\rangle \neq 0$ and $\langle R_w^\theta,\rho'\rangle\neq 0$. Thus, the 
connected components of this graph define a partition of $\Irr(G^F)$. There
are related partitions of $\Irr(G^F)$ into so-called ``geometric conjugacy 
classes'' and into so-called ``rational series'', but the definitions are
more complicated; see \cite[\S 12.1]{C2}, \cite[\S 10]{DeLu}, 
\cite[Chap.~13]{DiMi2}, \cite[\S 7]{L0a}. We just note at this point 
that the definitions immediately show the following implications: 
\begin{align*}
&\rho,\rho' \mbox{ belong to the same connected component of 
$\cG(G^F)$}\\
& \quad \Rightarrow \quad \mbox{$\rho,\rho'$ belong to the same 
rational series}\\
&\quad  \Rightarrow \quad \mbox{$\rho,\rho'$ belong to the same 
geometric conjugacy class}.
\end{align*}
We will clarify the relations between these notions in 
Theorem~\ref{agree1} and Remark~\ref{ratser} below. (See also 
Bonnaf\'e \cite[\S 11]{Bo3} for a further, detailed discussion).
\end{rema}
%Once we have developed more of
%the general theory, we will also indicate the relation of this partition
%with the above-mentioned ``rational series'' and ``geometric conjugacy
%classes''; see Theorem~\ref{agree1} and Remark~\ref{ratser} below (and
%Bonnaf\'e \cite[\S 11]{Bo3} for a further, detailed discussion).

\begin{exmp} \label{expsl2} Let $G=\SL_2(\overline{\F}_p)$, where $p\neq 2$,
and $F$ be the Frobenius map which raises each matrix entry to its 
$q$th power; then $G^F=\SL_2(\F_q)$. The character table of $G^F$ 
is, of course, well-known. (See, e.g., Fulton--Harris \cite[\S 5.2]{FuHa}.) 
The set $\Irr(G^F)$ consists of the following irreducible characters:
\begin{align*}
1_G &\quad \mbox{(trivial character)}; \\
\mbox{St}_G &\quad \mbox{with $\mbox{St}_G(1)=q$ (Steinberg character)};\\
\rho_i & \quad \mbox{with $\rho_i(1)=q+1$,  for $1\leq i \leq (q-3)/2$};\\
\pi_j & \quad \mbox{with $\pi_j(1)=q-1$,  for $1\leq j \leq (q-1)/2$};\\
\rho_0',\rho_0'' & \quad \mbox{each of degree $(q+1)/2$};\\
\pi_0',\pi_0'' & \quad \mbox{each of degree $(q-1)/2$}.
\end{align*}
Thus, in total, we have $|\Irr(G^F)|=q+4$. Let us re-interprete this in 
terms of the characters $R_w^{\theta}$; cf.\ Bonnaf\'e \cite[\S 5.3]{Bo4}.
For $\xi\in k^\times$, denote by $S(\xi)$ the diagonal matrix with diagonal 
entries $\xi,\xi^{-1}$. Then an $F$-stable maximal torus $T_0 \subseteq G$
as above and the corresponding Weyl group are given by
\[ T_0=\left\{S(\xi)\mid \xi \in k^\times\right\} \qquad \mbox{and}\qquad 
W=\langle s \rangle \quad \mbox{where} \quad \dot{s}=\begin{pmatrix} 0 & 1 \\
-1 & 0 \end{pmatrix}.\]
Let $\xi_0\in k^\times$ be a fixed element of order $q-1$ and 
$\xi_0'\in k^\times$ be a fixed element of order $q+1$. Then 
\begin{alignat*}{2}
T_0[1] &=T_0^F=\{S(\xi_0^a) \mid 0\leq a<q-1\} &&\quad \mbox{is cyclic of
order $q-1$};\\ T_0[s]&=\{S(\xi_0'^b)\mid 0\leq b<q+1\} &&\quad 
\mbox{is cyclic of order $q+1$}. 
\end{alignat*}
Let $\varepsilon\in \C$ be a primitive root of unity of order $q-1$;
let $\eta\in \C$ be a primitive root of unity of order $q+1$. For $i\in\Z$ 
let $\theta_i\in \Irr(T_0[1])$ be the character which sends $S(\xi_0^a)$ 
to $\varepsilon^{ai}$; for $j \in\Z$ let $\theta_j'\in \Irr(T_0[s])$ be the 
character which sends $S(\xi_0'^b)$ to $\eta^{bj}$. Then one finds that:
\[ R_{1}^{\theta_i}=\left\{\begin{array}{cl} 
1_{G}+\mbox{St}_{G} &  \mbox{if $i=0$},\\  
\rho_i &  \mbox{if $1\leq i\leq \frac{q{-}3}{2}$},\\
\rho_0'+\rho_0'' & \mbox{if $i=\frac{q{-}1}{2}$},\end{array}\right. 
\qquad R_{s}^{\theta_j'}=\left\{\begin{array}{cl} 
1_{G}-\mbox{St}_{G} & \mbox{if $j=0$},\\  
-\pi_j &  \mbox{if $1\leq j\leq \frac{q{-}1}{2}$},\\
-\pi_0'-\pi_0'' &  \mbox{if $j=\frac{q{+}1}{2}$};\end{array}\right.\]
furthermore, $R_1^{\theta_i}=R_1^{\theta_{-i}}$ and $R_s^{\theta_j'}=
R_s^{\theta_{-j}'}$ for $i,j\in \Z$. Hence, the graph $\cG(G^F)$ has 
$q+1$ connected components, which partition $\Irr(G^F)$ into the 
following subsets:
\begin{center}
$\{1_G,\mbox{St}_G\}, \quad \{\rho_i\} \;(1\leq i \leq \frac{q-3}{2}),
\quad \{\pi_j\} \;(1\leq j \leq \frac{q-1}{2}),\quad \{\rho_0', \rho_0''\},
\quad \{\pi_0',\pi_0''\}$.
\end{center}
The pairs $(1,\theta_i)$ and $(s,\theta_j')$, where $i=\frac{q-1}{2}$ and 
$j=\frac{q+1}{2}$, play a special role in this context; see 
Example~\ref{expsl2a} below.
\end{exmp}
% below for a further ) if $i=\frac{q-1}{2}$ and 
%. Hence, the set $\{\rho_0', \rho_0'',\pi_0',\pi_0''\}$ 
% are geometrically conjugate (in the sense of \cite[7.3.4]{C2};
%see also Example~\ref{expsl2a} below) if $i=\frac{q-1}{2}$ and 
%$j=\frac{q+1}{2}$. Hence, the set $\{\rho_0', \rho_0'',\pi_0',\pi_0''\}$ 

It is a good exercise to re-interprete similarly Srinivasan's character 
table \cite{Sr68} of $G^F=\mbox{Sp}_4(\F_q)$. (See also \cite{Sr94}.)

\begin{rem} \label{expsl2b} In the setting of 
Example~\ref{expsl2}, we see that $\dim \mbox{CF}_{\text{unif}}(G^F)=q+2$
and so $\mbox{CF}_{\text{unif}}(G^F)\subsetneqq \mbox{CF}(G^F)$.
Furthermore, we see that the two class functions
\begin{center}
$\frac{1}{2}(\rho_0'-\rho_0''+\pi_0'-\pi_0'') \qquad\mbox{and}\qquad
\frac{1}{2}(\rho_0'-\rho_0''-\pi_0'+\pi_0'')$
\end{center}
form an orthonormal basis of the orthogonal complement of 
$\mbox{CF}_{\text{unif}}(G^F)$ in $\mbox{CF}(G^F)$.
Using the character table of $G^F$ \cite[\S 5.2]{FuHa}, the values of
these class functions are given as follows, where  
$\delta=(-1)^{(q-1)/2}$. 
\[\renewcommand{\arraystretch}{1.2} \begin{array}{cccccc} \hline 
&J&J'&-J&-J'& \mbox{otherwise}\\\hline
%\frac{1}{2}(\rho_0'+\rho_0''+\pi_0'+\pi_0'') & q & \delta & 0 & 0 & \delta 
%& \delta & (-1)^a & -(-1)^b\\
%\frac{1}{2}(\rho_0'+\rho_0''-\pi_0'-\pi_0'') & 1 & \delta q & 1 & 1 & 0 & 
%0 & (-1)^a & (-1)^b\\
\frac{1}{2}(\rho_0'-\rho_0''+\pi_0'-\pi_0'') & 0 & 0 & \delta
\sqrt{\delta q} & -\delta\sqrt{\delta q} & 0 \\
\frac{1}{2}(\rho_0'-\rho_0''-\pi_0'+\pi_0'') & \sqrt{\delta q} & 
-\sqrt{\delta q} & 0 & 0 & 0 \\\hline\end{array}\]
Here, $J$ is $G^F$-conjugate to $\left(\begin{array}{cc} 1 & 1 \\ 0 & 1 
\end{array}\right)$ and $J'$ is $G^F$-conjugate to $\left(\begin{array}{cc} 
1 & \xi_0 \\ 0 & 1 \end{array}\right)$. We will encounter these two 
functions again at the end of this paper, in Example~\ref{expcusp}.
\end{rem}

\begin{exmp} \label{expgln} Assume that $G$ has a connected centre and 
that $W$ is either $\{1\}$ or a direct product of Weyl groups of type 
$A_n$ for various $n\geq 1$. Then it follows from Lusztig--Srinivasan
\cite{LuSr} that every class function on $G^F$ is uniform. In all other 
cases, we have $\mbox{CF}_{\text{unif}}(G^F)\subsetneqq \mbox{CF}(G^F)$,
and this is one reason why the character theory of $G^F$ is so much more
complicated in general.
\end{exmp}

In general, it seems difficult to say precisely how big 
$\mbox{CF}_{\text{unif}}(G^F)$ is inside $\mbox{CF}(G^F)$. To close this 
section, we recall a conjecture of Lusztig concerning this question, 
and indicate how a proof can be obtained by the methods that are available
now. (One can probably formulate a more complete answer in the framework of 
the results in Section~\ref{seccs}, but we will not pursue this here.)
Let $\cC$ be an $F$-stable conjugacy class of $G$; then $\cC^F$ 
is a union of conjugacy classes of~$G^F$. We denote by $f_{\cC}^G\in
\mbox{CF}(G^F)$ the characteristic function of $\cC^F$; thus, $f_{\cC}^G$ 
takes the value $1$ on $\cC^F$ and the value $0$ on the complement of~$\cC^F$. 

\begin{thm}[Lusztig \protect{\cite[Conjecture~2.16]{Lu2}}] \label{luconj} 
The function $f_{\cC}^G$ is uniform. 
\end{thm}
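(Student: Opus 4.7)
The plan is to prove Theorem~\ref{luconj} in two stages: first reduce via Jordan decomposition to the case where $\cC$ is a unipotent conjugacy class, and then establish the unipotent case using the character formula for $R_w^\theta$ together with the theory of (generalized) Green functions.

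For the reduction, I would partition $\cC^F$ according to the $G^F$-conjugacy class of the semisimple Jordan part: writing $g = g_sg_u$, the function $f_\cC^G$ decomposes as a sum $\sum_i f_i$, where $f_i$ is the indicator of those $g\in\cC^F$ whose semisimple part lies in a fixed $G^F$-class of semisimple elements with representative $s_i$. Since the uniform functions form a subspace, it suffices to show each $f_i$ is uniform. Fix such an $s_0$ and set $H:=C_G^\circ(s_0)$, an $F$-stable connected reductive subgroup of the same rank as $G$. The Deligne--Lusztig character formula, applied in the form of Lemma~\ref{app3} (which generalises the $s_0=1$ identity in Remark~\ref{green}), expresses the values of $R_w^\theta$ on elements of the form $s_0u$ with $u\in H^F_{\text{uni}}$ as a combination of Green functions $Q_{w'}^H$ on the smaller group $H$, weighted by values of $\theta$ on semisimple data. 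Applying Fourier inversion over $\Irr(T_0[w])$ converts the uniform projection of $f_i$ in $G^F$ into (a $G^F$-induction of) the uniform projection in $H^F$ of the indicator of a corresponding $F$-stable unipotent $H$-conjugacy class. This reduces the assertion to the unipotent case in $H$.

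For the unipotent case, let $\cD$ be an $F$-stable unipotent conjugacy class in $H$ and consider $f_\cD^H$. The orthogonality relations of Proposition~\ref{scform}, specialised to unipotent elements, yield the standard orthogonality relations for the Green functions $Q_w^H$, and hence one can write down the uniform projection of $f_\cD^H$ explicitly as $\sum_w c_w\,Q_w^H$ for computable coefficients $c_w$. The task is to show this projection equals $f_\cD^H$ itself, equivalently, that $f_\cD^H$ pairs to zero with every unipotent-supported class function lying in the orthogonal complement of the span of the $Q_w^H$. Such ``non-uniform'' class functions are, by the theory of character sheaves \cite{L2}, accounted for by (characteristic functions of) cuspidal character sheaves and their parabolic inductions. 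The vanishing of the pairing with $f_\cD^H$ follows from the fact that cuspidal character sheaves are \emph{clean}, so their characteristic functions, although capable of distinguishing distinct $G^F$-classes within a single $F$-stable $G$-class, sum to zero over any such $F$-stable $G$-class.

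The main obstacle is the unipotent case. The Jordan decomposition reduction is essentially bookkeeping once one has Lemma~\ref{app3} in hand. The genuine input is the cleanness of cuspidal character sheaves, which historically was only known under assumptions on $p$; it is precisely the unconditional version of this result established in \cite{L10} that makes the present proof work for arbitrary $p$ and $q$ and renders the conjecture, in the author's words, ``relatively straightforward'' by today's standards.
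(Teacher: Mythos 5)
Your outline recognises the two main ingredients — Lemma~\ref{app3} (Fourier inversion transporting Green functions from a centraliser $H$ up to $G$) and the previously known unipotent case — but it contains a genuine gap in the reduction along the Jordan decomposition, which is precisely the step the paper is organised to sidestep.

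You propose to fix a semisimple part $s_0$ of an element of $\cC^F$ and set $H := C_G^\circ(s_0)$, and then apply Lemma~\ref{app3} to this $H$. However, Lemma~\ref{app3} is stated (and proved via the character formula) under the hypothesis that $C_G(s_0)$ is \emph{connected}, so that $H = C_G(s_0) = C_G^\circ(s_0)$. If $C_G(s_0)$ is disconnected, the Fourier inversion in the proof of Lemma~\ref{app3} produces a sum over $x \in C_G(s_0)^F$ rather than $H^F$, and the identity $\hat{Q}_T^H = |T^F|^{-1}\sum_\theta \theta(s_0)^{-1} R_T^G(\theta)$ in \ref{step1a}(b) is no longer valid as written: one picks up contributions from $Q_{xTx^{-1}}^H$ for $x$ ranging over component representatives of $C_G(s_0)/H$, and these need not agree with $Q_T^H$. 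Your proposal silently passes over this, treating the general reduction as ``bookkeeping''. The paper avoids the disconnected case entirely by first establishing the result when $\Gder$ is simply-connected (so that \emph{every} semisimple centraliser is connected and Lemma~\ref{app3} applies cleanly — this is Corollary~\ref{step1b}), and then transferring the result to arbitrary $G$ via a surjection $\iota\colon G'\rightarrow G$ with $\Gder'$ simply-connected, using the compatibility of $R_T^G$ and $\Rst_T^G$ with $\iota$ (Proposition~\ref{app2}). This isogeny reduction step is entirely missing from your proposal and cannot be replaced by the direct decomposition you describe.

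A smaller point: for the unipotent case the paper simply invokes \cite[Prop.~1.3]{aver} (together with \cite{L2}, \cite[Theorem~5.5]{S3}), whereas you attempt to sketch a direct proof via cleanness. Your sketch asserts that characteristic functions of cuspidal character sheaves ``sum to zero over any $F$-stable $G$-class''; this is not an immediate consequence of cleanness and would itself require an argument (it is true in examples such as $\SL_2$, where the relevant sums cancel because the two $G^F$-classes into which an $F$-stable $G$-class splits have equal size and the characteristic function takes opposite signs, but that pattern needs to be established in general). You would do better to cite the unipotent case as known, which is also the right place to invoke the unconditional cleanness result of \cite{L10} that removes the restrictions on $p$.
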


If $\cC$ is a unipotent class and $p$ is a ``good'' prime for~$G$, then 
this easily follows from the known results on Green functions; see Shoji 
\cite{S1}. It is shown in \cite[Prop.~1.3]{aver} that the condition on $p$ 
can be removed, using results from \cite{L2} and \cite[Theorem~5.5]{S3}. 
So Theorem~\ref{luconj} holds when $\cC$ is a unipotent class. By an 
argument analogous to that in \cite[25.5]{L2} (at the very end of Lusztig's 
character sheaves papers) one can deduce from this that Theorem~\ref{luconj} 
holds in complete generality; details are given in an appendix at the end 
of this paper (Section~\ref{secapp}).

%%%%%%%%%%%%%%%%%%%%%%%%%%%%%%%%%%%%%%%%%%%%%%%%%%%%%%%%%%%%%%%%%%%%%%%
\section{Order and degree polynomials} \label{secdeg}

A series of finite groups of Lie type (of a fixed ``type'') is an 
infinite family of groups like $\{\SL_n(\F_q)\mid \mbox{any $q$}\}$
(where $n$ is fixed) or $\{E_8(\F_q)\mid \mbox{any $q$}\}$. It then 
becomes meaningful to say that the orders of the groups in the family 
are given by a polynomial in~$q$, or that the degrees of the characters 
of these groups are given by polynomials in~$q$. In this section we 
explain how these polynomials can be formally defined. This will allow 
us to attach some useful numerical invariants to the irreducible characters 
of~$G^F$.

\begin{rema} \label{abs31} Let $X=X(T_0)$ be the character group of $T_0$, 
that is, the abelian group of all algebraic homomorphisms $\lambda\colon 
T_0 \rightarrow k^\times$; this group is free abelian of rank equal to 
$\dim T_0$. There is an embedding $W\hookrightarrow \Aut(X)$, $w\mapsto 
\underline{w}$, such that 
\[ \underline{w}(\lambda)(t)=\lambda(\dot{w}^{-1}t\dot{w}) \qquad
\mbox{for all $\lambda\in X$, $w\in W$, $t\in T_0$}.\]
Furthermore, $F$ induces a homomorphism $X \rightarrow X$, $\lambda \mapsto
\lambda \circ F$, which we denote by the same symbol. There exists 
$\varphi_0\in \Aut(X)$ of finite order such that
\[\lambda(F(t))=q\varphi_0(\lambda)(t)\quad \mbox{for all $t\in T_0$}.\]
(See \cite[1.4.16]{gema}.) Then the automorphism $\sigma \colon W
\rightarrow W$ in \ref{abs21} is determined by $\underline{w}\circ 
\varphi_0=\varphi_0\circ \underline{\sigma(w)}$ for all $w \in W$ (see,
e.g., \cite[6.1.1]{gema}). Now, the triple 
\[\G:=\bigl(X,\,\varphi_0,\,W \hookrightarrow \Aut(X)\bigr)\]
may be regarded as the combinatorial skeleton of $G,F$; note that the 
prime power~$q$ does not occur here. Let $\bq$ be an indeterminate 
over~$\Q$. For any $w\in W$, let us set 
\[ |\T_w|:=\det(\bq\,\id_X-\varphi_0^{-1}\circ \underline{w})\in \Z[\bq].\]
Note that $|\T_w|$ is monic of degree $\dim T_0$. By \cite[3.3.5]{C2}, the
order of the finite subgroup $T_0[w] \subseteq G$ introduced in 
Section~\ref{secdl} is given by evaluating $|\T_w|$ at $q$. In particular,
the order of $T_0^F$ is given by $|\T_1|(q)$. Let us now define 
\[ |\G|:=\bq^N |\T_1|\sum_{w \in W^\sigma}\bq^{l(w)}\in \Z[\bq],\]
where $N\geq 0$ denotes the number of reflections in $W$. Then the order 
of $G^F$ is given by evaluating $|\G|$ at~$q$; see, e.g., \cite[\S 2.9]{C2}, 
\cite[4.2.5]{mybook}. We call $|\G|\in \Z[\bq]$ the ``order polynomial''
of $G^F$. Note that $|\G|$ has degree $\dim G=2N+\dim T_0$ and that $\bq^N$ 
is the largest power of $\bq$ which divides $|\G|$. Since the order of 
$T_0[w]$ divides the order of $G^F$, one deduces that $|\T_w|$ divides 
$|\G|$ in $\Q[\bq]$. An alternative expression for $|\G|$ is given as 
follows. Steinberg \cite[Theorem~14.14]{St68} (see also \cite[3.4.1]{C2}) 
proves a formula for the total number of $F$-stable maximal tori in~$G$. 
This formula yields the identity
\[ |\G|=\bq^{2N}\Bigl(\frac{1}{|W|}\sum_{w\in W} \frac{1}{|\T_w|}
\Bigr)^{-1}.\]
(For further details, see \cite[\S 1]{BMM}, \cite[\S 1.6]{gema}, 
\cite[24.6, 25.5]{MaTe}.)
\end{rema}

\begin{rem} \label{abs31a} Let $Z=Z(G)$ be the centre of $G$ and 
$|\mathbb{Z}^\circ|\in\Q[\bq]$ be the order polynomial of the 
torus~$Z^\circ$. Then the discussion in \cite[\S 2.9]{C2} also shows that 
\[ |\T_1|=|\Z^\circ|\prod_{J \in S_\sigma} (\bq^{|J|}-1),\]
where $S_\sigma$ denotes the set of orbits of~$\sigma$ on~$S$. We call 
$|S_\sigma|$ the semisimple $\F_q$-rank of~$G$. Thus, $(\bq-1)^{|S_\sigma|}$
is the exact power of $\bq-1$ which divides the order polynomial of $\Gder$,
where $\Gder$ is the derived subgroup of $G$. Note that $G=Z^\circ.
\Gder$ and $Z^\circ\cap \Gder$ is finite; then $|G^F|=|(Z^\circ)^F|
|\Gder^F|$ and $|\G|=|\Z^\circ||\G_{\text{der}}|$ (see \cite[\S 2.9]{C2}).
\end{rem}

\begin{rema} \label{abs32} Now let us turn to the irreducible characters of 
$G^F$. In order to define ``degree polynomials'', we consider the virtual 
characters $R_w^\theta$ from the previous section. By 
Proposition~\ref{dimform} and the above formalism, we see that 
$R_w^\theta(1)$ is given by evaluating the polynomial $(-1)^{l(w)}
\bq^{-N}|\G|/|\T_w|\in \Q[\bq]$ at~$q$. Hence, setting 
\[\D_\rho:=\frac{1}{|W|}\sum_{w\in W}\sum_{\theta \in \Irr(T_0[w])}
(-1)^{l(w)} \langle R_w^\theta,\rho\rangle\,\bq^{-N}\frac{|\G|}{|\T_w|} 
\in \Q[\bq],\]
we deduce from Proposition~\ref{dim2} that $\rho(1)$ is obtained by 
evaluating the polynomial $\D_\rho$ at $q$; in particular, $\D_\rho\neq 0$.
Having $\D_\rho\in\Q[\bq]$ at our disposal, we obtain numerical invariants 
of $\rho$ as follows. 
\begin{align*} 
A_\rho\, &:=\mbox{ degree of $\D_\rho$},\\
a_\rho\, &:=\mbox{ largest non-negative integer such that $\bq^{a_\rho}$ 
divides $\D_\rho$},\\
n_\rho\, &:=\mbox{ smallest positive integer such that $n_\rho\D_\rho
\in\Z[\bq]$}.
\end{align*}
All we can say at this stage is that $0\leq a_\rho\leq A_\rho\leq N$ (since
$\bq^{-N}|\G|/|\T_w|\in\Q[\bq]$ has degree $N$); furthermore, $n_\rho$ 
divides $|W|$, since $|\T_w|\in\Z[\bq]$ is monic, $|\G|\in \Z[\bq]$ and, 
hence, $|W|\D_\rho\in\Z[\bq]$. In fact, it is known~---~but this requires 
much more work~---~that $\D_\rho$ always has the following form:
\begin{equation*}
\D_\rho=\frac{1}{n_\rho}\bigl(\bq^{A_\rho}+ \,\ldots \,\pm 
\bq^{a_\rho}\bigr), \tag{$\clubsuit$}
\end{equation*}
where the coefficients of all intermediate powers $\bq^i\,$ ($a_\rho<i<
A_\rho$) are integers and the number $n_\rho$ is typically much smaller
than the order of $W$. Indeed, if $Z(G)$ is connected, then this is 
contained in \cite[4.26]{L1} and \cite[\S 3]{Al}, \cite[\S 71B]{CR2}; for 
the general case, one uses an embedding of $G$ into a group with a
connected centre and the techniques described in Section~\ref{secj1} below 
(see Remark~\ref{abs32a}). We also mention that a formula like 
($\clubsuit$) already appeared early in Lusztig's work \cite[\S 8]{lue8}.
\end{rema}

\noindent {\bf Example.} Let $G^F=\SL_2(\F_q)$. Then $|\G|=\bq(\bq^2-1)$, 
$|\T_1|=\bq-1$ and $|\T_s|=\bq+1$, where we use the notation in 
Example~\ref{expsl2}. For $1\leq i\leq (q-3)/2$, the character $\rho_i$ 
occurs with multiplicity $1$ in $R_1^{\theta_i}$ and in $R_1^{\theta_{-i}}$. 
So $\D_{\rho_i}=\frac{1}{2}((\bq-1)+(\bq-1))=\bq-1$, as expected. Similarly, 
one finds that $\D_{1_G}=1$, $\D_{\text{St}_G}=\bq$, $\D_{\pi_j}=\bq+1$ 
(for $1\leq j \leq (q-1)/2$), $\D_{\rho_0'}=\D_{\rho_0''}=\frac{1}{2}(\bq-1)$
and $\D_{\pi_0'}= \D_{\pi_0''}= \frac{1}{2}(\bq+1)$.

%\begin{rem} \label{abs32aa} Let $f \in \Q[\bq]$ be a polynomial such that
%$f=\D_\rho$ for some $\rho \in \Irr(G^F)$. Then there is an infinite
%subset $E \subseteq \Z_{\geq 1}$ with the following properties. If $e \in E$,
%then 
%\begin{itemize}
%\item $\sigma$ is the automorphism of $W$ induced by $F^e$;
%\item $\varphi_0$ is the automorphism of finite order of $X$ induced by
%$F^e$;
%\item $f=\D_{\rho'}$ for some $\rho'\in \Irr(G^{F^e})$. 
%\end{itemize}
%\end{rem}

\begin{rema} \label{abs33} The integers $n_\rho$ and $a_\rho$ attached 
to $\rho$ can also be characterized more directly in terms of the character 
values of $\rho$, as follows. Let $\cO$ be an $F$-stable unipotent
conjugacy class of $G$. Then $\cO^F$ is a union of conjugacy classes of
$G^F$. Let $u_1,\ldots,u_r\in \cO^F$ be representatives of the classes 
of $G^F$ contained in $\cO^F$. For $1\leq i \leq r$ we set $A(u_i):=
C_G(u_i)/C_G^\circ(u_i)$. Since $F(u_i)=u_i$, the Frobenius map $F$ 
induces an automorphism of $A(u_i)$ which we denote by the same symbol. 
Let $A(u_i)^F$ be the group of fixed points under $F$. Then we set
\[ \AV(f,\cO):=\sum_{1 \leq i \leq r} |A(u_i):A(u_i)^F|f(u_i)\qquad
\mbox{for any $f\in \mbox{CF}(G^F)$}.\]
Note that this does not depend on the choice of the representatives $u_i$;
furthermore, the map $\mbox{CF}(G^F) \rightarrow \C$, $f \mapsto \AV(f,
\cO)$, is linear.

Now let $\rho\in\Irr(G^F)$ and set $d_\rho:=\max\{ \dim \cO \mid \AV(\rho,
\cO)\neq 0\}$. By the main results of \cite{GM1}, \cite{Lu6}, there is a 
unique $\cO$ such that $\dim \cO=d_\rho$ and $\AV(\rho,\cO)\neq 0$. This 
$\cO$ will be denoted by $\cO_\rho$ and called the unipotent support of 
$\rho$. Let $u\in \cO_\rho$. Then we have
\[ a_\rho=(\dim C_G(u)-\dim T_0)/2\qquad\mbox{and}\qquad 
\AV(\rho,\cO_\rho)=\pm \frac{1}{n_\rho}q^{a_\rho} |A(u)|.\]
(See \cite[Theorem~3.7]{GM1}.) Thus, from $\cO_\rho$ and $\AV(\rho,
\cO_\rho)$, we obtain $a_\rho$, $A(u)$ and, hence, also $n_\rho$. For 
further characterisations of these integers, see Lusztig \cite{L2007}.
\end{rema}

\begin{rem} The above results on the unipotent support of the irreducible 
characters of $G^F$ essentially rely on Kawanaka's theory \cite{Kaw1}, 
\cite{Kaw2} of ``generalized Gelfand--Graev representations'', which are 
defined only if $p$ is a ``good'' prime for $G$. Lusztig \cite{Lu6} showed 
how the characters of these representations can be determined assuming that
$p,q$ are sufficiently large. The latter restrictions on $p,q$ have been for 
a long time a drawback for applications of this theory. Recently, Taylor 
\cite{Tay15} has shown that Lusztig's results hold under the single 
assumption that~$p$ is good.
\end{rem}

\begin{exmp} \label{reguni} For any unipotent $u \in G$, we have 
$\dim C_G(u)\geq \dim T_0$. Furthermore, there is a unique unipotent 
class $\cO$ such that $\dim C_G(u)=\dim T_0$ for $u \in \cO$; this is 
called the regular unipotent class and denoted by $\cO_{\text{reg}}$. (See 
\cite[\S 5.1]{C2}.) In particular, $\cO_{\text{reg}}$ is $F$-stable.
Let $\rho \in \Irr(G^F)$. We say that $\rho$ is a ``semisimple character'' 
if $\AV(\rho,\cO_{\text{reg}})\neq 0$. These characters were originally 
singled out in the work of Deligne--Lusztig \cite[10.8]{DeLu} (see also 
\cite[\S 8.4]{C2}). Clearly, if $\rho$ is semisimple, then 
$\cO_{\text{reg}}$ is the unipotent support of $\rho$ in the sense of 
\ref{abs33}; furthermore, $\rho$ is semisimple if and only if $a_\rho=0$.

Semisimple characters appear ``frequently'' in the virtual characters
$R_w^\theta$. Indeed, let $w \in W$ and $\theta \in \Irr(T_0[w])$. 
By \cite[Theorem~9.16]{DeLu}, we have $R_w^\theta(u)=1$ for any $u \in 
\cO_{\text{reg}}^F$. This certainly implies that $\AV(R_w^\theta,
\cO_{\text{reg}}) \neq 0$ and so there exists some $\rho \in \Irr(G^F)$ 
such that $\langle R_w^\theta,\rho\rangle \neq 0$ and $\AV(\rho,
\cO_{\text{reg}})\neq 0$. Thus, every connected component of the graph 
$\cG(G^F)$ in \ref{dlgraph} has at least one vertex labeled by a semisimple
character.
\end{exmp}

\begin{rem} \label{agree} Assume that $Z(G)$ is connected. Then note that 
our definition of a ``semisimple character'' looks slightly different from 
that in \cite[p.~280]{C2}, \cite[10.8]{DeLu}, where the global average value 
on $\cO_{\text{reg}}^F$ is used instead of our $\AV(\rho,\cO_{\text{reg}})$. 
Let us check that the two definitions do agree. First note that
$\cO_{\text{reg}}\cap U_0^F\neq \varnothing$; furthermore, if $u\in 
\cO_{\text{reg}}\cap U_0$, then $C_G(u)=Z(G).C_{U_0}(u)\subseteq T_0.U_0=
B_0$ (see \cite[14.15]{DiMi2}). Hence, a standard application of 
Lang's Theorem shows that we can find elements $u_1,\ldots,u_r \in U_0^F$
which form a set of representatives of the classes of $G^F$ inside 
$\cO_{\text{reg}}^F$. By \cite[14.15, 14.18]{DiMi2}, the group $A(u_i)$ is 
generated by the image of $u_i$ in $A(u_i)$; hence, we have $A(u_i)^F=
A(u_i)$ and so 
\[ \AV(\rho,\cO_{\text{reg}})=\rho(u_1)+\ldots + \rho(u_r).\]
On the other hand, as above, we have $C_G(u_i)=Z(G).C_{U_0}(u_i)$ and so 
$|C_G(u_i)^F|=|Z(G)^F||C_{U_0}^\circ(u_i)^F||A(u_i)|$. Now 
$C_{U_0}^\circ(u_i)$ is a connected unipotent group and so 
$|C_{U_0}^\circ(u_i)^F|=q^{d_i}$ where $d_i:=\dim C_{U_0}^\circ(u_i)$; 
see, e.g., \cite[4.2.4]{mybook}. Also note that all $d_i$ are equal and 
that all the groups $A(u_i)$ have the same order. Hence, $|C_G(u_i)^F|$ 
does not depend on $i$. So the global average value is given by 
\[ \sum_{g \in \cO_{\text{reg}}^F} \rho(g)=\sum_{1 \leq i \leq r}
|G^F:C_G(u_i)^F|\,\rho(u_i)=c\bigl(\rho(u_1)+\ldots +\rho(u_r)\bigr)
\quad \mbox{where $c\neq 0$}.\]
Thus, indeed, the global average value is non-zero if and only if $\AV(\rho,
\cO_{\text{reg}})\neq 0$. (Note that, in general, the global average value
of $\rho$ on an $F$-stable unipotent class $\cO$ will not be proportional 
to $\AV(\rho,\cO)$.)
\end{rem}

\begin{rema} \label{parab} Let $\cP(S)$ be the set of all subsets of $S$.
Then $\sigma\colon W\rightarrow W$ (see \ref{abs21}) acts on $\cP(S)$ and we
denote by $\cP(S)^\sigma$ the $\sigma$-stable subsets. For $J \in 
\cP(S)^\sigma$ we have a corresponding $F$-stable parabolic subgroup $P_J=
\langle B_0,\dot{s} \mid s \in J \rangle \subseteq G$. This has a Levi 
decomposition $P_J=U_J \rtimes L_J$ where $U_J$ is the unipotent radical of 
$P_J$ and $L_J$ is an $F$-stable closed subgroup such that $T_0\subseteq 
L_J$; furthermore, $L_J$ is connected reductive, $B_0\cap L_J$ is an 
$F$-stable Borel subgroup of $L_J$, and the Weyl group $N_{L_J}(T_0)/T_0$ 
of $L_J$ is isomorphic to the parabolic subgroup $W_J:=\langle J \rangle$ 
of~$W$. We have $P_J^F=U_J^F\rtimes L_J^F$ and so there is canonical 
homomorphism $\pi_J\colon P_J^F\rightarrow L_J^F$ with kernel $U_J^F$. 
Hence, if $\phi \in \Irr(L_J^F)$, then $\phi \circ \pi_J\in \Irr(P_J^F)$ 
and we define 
\[R_J^S(\phi):=\Ind_{P_J^F}^{G^F}\bigl(\phi\circ \pi_J\bigr) \qquad
\mbox{``Harish-Chandra induction''}.\]
We say that $\rho \in \Irr(G^F)$ is cuspidal if $\langle R_J^S(\phi),
\rho\rangle=0$ for any $J\in \cP(S)^\sigma\setminus \{S\}$ and any $\phi 
\in \Irr(L_J^F)$. If $J\in \cP(S)^\sigma$ and $\phi\in \Irr(L_J^F)$ is 
cuspidal, then let 
\[\Irr(G^F|J,\phi):=\{ \rho \in \Irr(G^F)\mid \langle R_J^S(\phi),
\rho\rangle\neq 0\}.\]
With this notation, $\Irr(G^F)$ is the union of the sets $\Irr(G^F|J,
\phi)$ for various $(J,\phi)$. (See \cite[Chap.~9]{C2}, 
\cite[\S 70B]{CR2}, \cite[Chap.~6]{DiMi2}.) The following result 
strengthens earlier results of Howlett--Lehrer which are described 
in detail in \cite[Chap.~10]{C2}.
\end{rema}

\begin{prop}[Lusztig \protect{\cite[8.7]{L1}}] \label{luhc} Assume that 
$Z(G)$ is connected. Let $J\in \cP(S)^\sigma$ and $\phi\in \Irr(L_J^F)$ be 
cuspidal. Let $W_J(\phi)$ denote the stabilizer of $\phi$ in 
$N_G(L_J)^F/L_J^F$. Then $W_J(\phi)$ is a finite Weyl group and there is a 
natural bijection 
\[\Irr(W_J(\phi)) \stackrel{\sim}{\longrightarrow}\Irr(G^F|J,\phi), 
\qquad \epsilon \mapsto \phi[\epsilon],\]
(depending on the choice of a square root of $q$ in $\C$).
\end{prop}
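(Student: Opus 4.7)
The plan is to study the decomposition of $R_J^S(\phi)$ through its endomorphism algebra $\cH_\phi:=\operatorname{End}_{\C G^F}\bigl(R_J^S(\phi)\bigr)$: by the usual Wedderburn correspondence, $\Irr(\cH_\phi)$ is in natural bijection with $\Irr(G^F|J,\phi)$, so the task reduces to identifying $\cH_\phi$ as an algebra and matching $\Irr(\cH_\phi)$ with $\Irr(W_J(\phi))$. As a starting point I invoke the Howlett--Lehrer analysis (see \cite[Chap.~10]{C2}): a Mackey-type computation inside the $(B,N)$-pair $(B_0^F,N_0^F)$ yields a $\C$-basis $\{B_w\}_{w\in W_J(\phi)}$ of $\cH_\phi$, and the distinguished generators coming from minimal $L_J$--$L_J$-double cosets satisfy quadratic relations of Hecke type with parameters that are positive integer powers of $q$. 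Without further input, $\cH_\phi$ is therefore a $2$-cocycle twist of a Hecke algebra attached to some Coxeter presentation of a subgroup of $W_J(\phi)$ generated by reflections, with a possible ``ramification'' complement on top.

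The crucial input of Lusztig \cite[8.7]{L1}, under the hypothesis that $Z(G)$ is connected, is to upgrade this picture to the two statements (i) $W_J(\phi)$ is itself a finite Weyl group, with the Howlett--Lehrer reflections as a set of simple generators, and (ii) the $2$-cocycle $\mu\in H^2(W_J(\phi),\C^\times)$ is trivial, so that $\cH_\phi$ is an honest Iwahori--Hecke algebra $\cH_q\bigl(W_J(\phi)\bigr)$ with integral parameters. I expect this to be the main obstacle: in general the stabilizer $W_J(\phi)$ can acquire a nontrivial ramification complement and the cocycle can be nontrivial. Killing both requires a careful analysis of cuspidal characters of $L_J$, combined with the fact that when $Z(G)$ is connected, such $\phi$ extend well to characters of $N_G(L_J)^F$ (equivalently, behave well under regular embeddings and Clifford theory). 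This extendibility simultaneously forces the reflection roots attached to $W_J(\phi)$ to form a genuine crystallographic root subsystem of the root datum of $G$, and trivialises the Schur-multiplier obstruction.

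Granted (i) and (ii), one concludes by standard Hecke algebra theory. Fixing a square root $q^{1/2}\in\C$ and applying Tits' deformation theorem, one obtains a $\C$-algebra isomorphism
\[
\cH_q\bigl(W_J(\phi)\bigr)\;\stackrel{\sim}{\longrightarrow}\;\C[W_J(\phi)],
\]
and hence a bijection $\Irr(W_J(\phi))\stackrel{\sim}{\to}\Irr(\cH_\phi)$. Composing with the Wedderburn bijection $\Irr(\cH_\phi)\stackrel{\sim}{\to}\Irr(G^F|J,\phi)$ produces the map $\epsilon\mapsto\phi[\epsilon]$ of the proposition, and the indicated dependence on the choice of square root of $q$ is inherited entirely from the deformation isomorphism (different square roots rescale the basis elements $B_w$ and thus permute the labels by a sign character on $W_J(\phi)$).
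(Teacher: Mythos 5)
The paper does not actually prove this Proposition: it attributes it to Lusztig \cite[8.7]{L1}, mentions that it strengthens the Howlett--Lehrer results in \cite[Chap.~10]{C2}, and in the remark immediately following points to \cite{myhc} for the unconditional triviality of the $2$-cocycle. Your sketch faithfully reconstructs the endomorphism-algebra framework underlying those references, and you correctly isolate where the genuine content lies: with $Z(G)$ connected, (i) $W_J(\phi)$ is a Weyl group with the Howlett--Lehrer reflections as simple generators (no ramification complement), and (ii) the cocycle twist is killed. Since the paper itself treats these as a black box, your proposal is logically at the same level and there is no gap relative to the paper.

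Two small refinements. First, the paper's remark after the Proposition notes that the cocycle triviality in (ii) is now known in complete generality by \cite{myhc}, independently of the connected-centre hypothesis; so presenting (ii) as solely a consequence of connected centre plus \cite{L1} is historically accurate for Lusztig's original argument but not the sharpest form of the statement. Second, the final appeal to Tits' deformation is correct, but the precise mechanism by which a square root of $q$ enters is not an arbitrary rescaling of the labels: the Howlett--Lehrer structure constants are powers $q^{c_s}$ with the $c_s$ possibly odd, and the canonical normalization used in \cite[8.5--8.7]{L1} and \cite[Chap.~10]{C2} replaces the $B_w$ by $q^{-l(w)/2}B_w$ before specializing, which is exactly where the chosen $q^{1/2}\in\C$ is consumed. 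Your last sentence gestures in this direction but overstates the effect (a change of square root does not in general amount to twisting by a sign character on all of $W_J(\phi)$; it affects only the components whose parameters involve odd powers of $q$).
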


The characters of finite Weyl groups are well-understood; see, e.g., 
\cite{GePf}. So the above result provides an effective parametrization
of the characters in $\Irr(G^F|J,\phi)$. The situation is technically
more complicated when the centre $Z(G)$ is not connected; see, e.g.,
\cite[Chap.~10]{C2}. (Note that one complication has disappeared in the
meantime: the $2$-cocyle appearing in \cite[10.8.5]{C2} is always
trivial; see \cite{myhc}.)

\begin{rem} \label{parab1} Let $\rho \in \Irr(G^F)$ and $J\in \cP(S)^\sigma$
be such that $\langle R_J^S(\phi),\rho \rangle\neq 0$ for some cuspidal
$\phi \in \Irr(L_J^F)$. As in Remark~\ref{abs31a}, let $S_\sigma$ be the
set of orbits of $\sigma$ on~$S$; similarly, let $J_\sigma$ be the set of 
orbits of $\sigma$ on~$J$. Then the number $t:=|S_\sigma|-|J_\sigma|$ 
may be called the ``depth'' of $\rho$, as in \cite[4.1]{L76a}. It is known
(see, e.g., \cite[9.2.3]{C2}) that $t$ is well-defined. Thus, the 
``depth'' provides a further numerical invariant attached to $\rho$. For
example, $\rho$ has depth $0$ if and only if $\rho$ is cuspidal.
\end{rem}

\begin{rem} \label{abs3fin} One can show that the degree polynomials 
$\D_\rho$ in \ref{abs32} behave in many ways like true character degrees. 
We leave it as a challenge to the interested reader to prove the following 
statements for $\rho\in \Irr(G^F)$:
\begin{itemize}
\item[(a)] $\D_\rho$ divides $|\G|$.
\item[(b)] If $\langle R_w^\theta,\rho\rangle\neq 0$, then $\D_\rho$
divides $|\G|/|\T_w|$. 
\item[(c)] If $d=\max\{i \geq 0 \mid (\bq-1)^i \mbox{ divides $\D_\rho$}\}$,
then $\rho$ has depth $|S_\sigma|-d\geq 0$; in particular, $\rho$ is
cuspidal if and only if $(\bq-1)^{|S_\sigma|}$ divides~$\D_\rho$.
\end{itemize}
(For hints, see \cite[\S 2]{myblock}, \cite[\S 2]{BMM}. If $q,p$ are very 
large, then the above properties are equivalent to analogous properties of 
actual character degrees. In order to reduce to this case, one can use 
an argument as in the proof of \cite[Theorem~3.7]{GM2}.)
\end{rem}

%%%%%%%%%%%%%%%%%%%%%%%%%%%%%%%%%%%%%%%%%%%%%%%%%%%%%%%%%%%%%%%%%%%%%%%
\section{Parametrization of unipotent characters} \label{secuni}

We say that $\rho \in \Irr(G^F)$ is unipotent if $\langle R_w^1,\rho
\rangle\neq 0$ for some $w \in W$, where $1$ stands for the trivial 
character of $T_0[w]$. We set
\[\fU(G^F)=\{\rho \in \Irr(G^F)\mid \rho \mbox{ unipotent}\}.\]
We will see that these play a distinguished role in the theory.

\begin{rem} \label{uni1} For any $w \in W$, we have $\langle R_w^1,1_G
\rangle=1$ and $\langle R_w^1,\mbox{St}_G\rangle=(-1)^{l(w)}$, where $1_G$ 
denotes the trivial character of $G^F$ and $\mbox{St}_G$ denotes the 
Steinberg character of $G^F$ (see \cite[7.6.5, 7.6.6]{C2}). Thus, we have 
$1_G\in \fU(G^F)$ and $\mbox{St}_G \in \fU(G^F)$. In fact, it is even
true that $1_G$ and $\mbox{St}_G$ are uniform:
\[ 1_G=\frac{1}{|W|} \sum_{w \in W} R_w^1 \qquad \mbox{and}\qquad
\mbox{St}_G=\frac{1}{|W|} \sum_{w \in W} (-1)^{l(w)}R_w^1;\]
see \cite[12.13, 12.14]{DiMi2}. We also have $\langle R_w^\theta,\rho
\rangle=0$ if $\rho \in \fU(G^F)$ and $\theta \neq 1$; see 
\cite[\S 12.1]{C2}. Hence, $\fU(G^F)$ defines a connected component of 
the graph $\cG(G^F)$ in \ref{dlgraph}.
\end{rem}

By Lusztig's Main Theorem~4.23 in \cite{L1}, the classification of 
$\fU(G^F)$ only depends on the pair $(W,\sigma)$. We shall now describe
this classification, where we do not follow the scheme in \cite{L1} but 
that in \cite[\S 3]{L10a}. This will be done in several steps.

\begin{rema} \label{abs41} Let us begin by explaining how the 
classification of $\fU(G^F)$ is reduced to the case where $G$ is a simple 
algebraic group of adjoint type. (See \cite[1.18]{L76a} and \cite[3.15]{Lu2}).
First, since $G/Z(G)$ is semisimple, there exists a surjective homomorphism 
of algebraic groups $\pi \colon G\rightarrow \Gad$ which factors through 
$G/Z(G)$ and where $\Gad$ is a semisimple group of adjoint type (see 
\cite[1.5.8]{gema} and \cite[p.~45/64]{St67}). 
Furthermore, there exists a Frobenius map $F\colon \Gad\rightarrow\Gad$ 
(relative to an $\F_q$-rational structure on $\Gad$) such that $F\circ\pi
=\pi\circ F$; thus, $\pi$ is defined over $\F_q$. (See
\cite[1.5.9(b)]{gema} and \cite[9.16]{St68}.) Hence, we obtain a group
homomorphism $\pi \colon G^F\rightarrow \Gad^F$ (but note that this is 
not necessarily surjective). By \cite[7.10]{DeLu}, this induces a bijection  
\[ \fU(\Gad^F)\stackrel{\sim}{\longrightarrow} \fU(G^F),\qquad
\rho \mapsto \rho\circ \pi.\] 
Now we can write $\Gad=G_1\times \ldots \times G_r$ where each $G_i$ 
is semisimple of adjoint type, $F$-stable and $F$-simple, that is, 
$G_i$ is a direct product of simple algebraic groups which are cyclically 
permuted by $F$. Let $h_i\geq 1$ be the number of simple factors in
$G_i$, and let $H_i$ be one of these. Then $F^{h_i}(H_i)=H_i$ and 
\[ \iota_i \colon H_i\rightarrow G_i, \qquad g \mapsto gF(g)\ldots 
F^{h_i-1}(g),\]
is an injective homomorphism of algebraic groups which restricts to
an isomorphism $\iota_i \colon H_i^{F_i}\stackrel{\sim}{\longrightarrow}
G_i^F$ where we denote $F_i:=F^{h_i}|_{H_i}\colon H_i\rightarrow H_i$. 
(See \cite[1.5.15]{gema}.) Let $f\colon G_1\times \ldots \times G_r
\rightarrow \Gad$ be the product map. Then, finally, it is shown in
\cite[1.18]{L76a} that $f$ and the homomorphisms $\iota_1,\ldots,
\iota_r$  induce bijections
\[\fU(\Gad^F)\stackrel{\sim}{\longrightarrow} \fU(G_1^F)\times \ldots 
\times \fU(G_r^F)\stackrel{\sim}{\longrightarrow} \fU(H_1^{F_1})\times
\ldots \times \fU(H_r^{F_r}).\] 
Thus, the classification of $\fU(G^F)$ is reduced to the case where 
$G$ is simple of adjoint type. 
\end{rema}

\begin{rema} \label{abs43} In order to parametrize the set $\fU(G^F)$,
we need some further invariants attached to the unipotent characters
of $G^F$. (For example, the invariants $A_\rho$, $a_\rho$, $n_\rho$ in 
\ref{abs33} are not sufficient.) For this purpose, we use an alternative 
characterization of the unipotent characters of $G^F$. This is based on
the varieties (see \cite[\S 1]{DeLu})
\[X_w:=\{ gB_0\in G/B_0\mid g^{-1}F(g)\in B_0\dot{w}B_0\} \qquad (w\in W).\]
Note that $X_w$ is stable under left multiplication by elements of $G^F$.
Hence, any $g\in G^F$ induces a linear map of $H_c^i(X_w)$ ($i\in\Z$). We 
have a morphism of varieties $Y_{\dot{w}}\rightarrow X_w$, $x\mapsto xB_0$,
which turns out to be surjective. By studying the fibres of this morphism 
and by using some basic properties of $\ell$-adic cohomology with compact 
support, one shows that 
\begin{center}
$R_w^1(g)=\sum_i (-1)^i\mbox{Trace}(g,H_c^i(X_w))\qquad\mbox{for all
$g\in G^F$};$
\end{center}
see \cite[7.7.8, 7.7.11]{C2}. Now let $\delta\geq 1$ be the order
of $\sigma\in\Aut(W)$. Then $F^\delta(X_w)=X_w$ and, hence, $F^\delta$ 
induces a linear map of $H_c^i(X_w)$ which commutes with the linear maps
induced by the elements of $G^F$. Consequently, if $\mu
\in\overline{\Q}_\ell$ is an eigenvalue of $F^\delta$ on $H_c^i(X_w)$, 
then the corresponding generalized eigenspace $H_c^i(X_w)_\mu$ is a 
$G^F$-module. Now every $\rho\in\fU(G^F)$ occurs as a constituent of 
$H_c^i(X_w)_\mu$ for some~$w$, some~$i$ and some~$\mu$. By Digne--Michel 
\cite[III.2.3]{DiMi0} and Lusztig \cite[3.9]{Lu2}, there is a well-defined 
root of unity $\omega_\rho$ with the following property. If $\rho$ occurs 
in $H_c^i(X_w)_\mu$ for some $w,i,\mu$, then $\mu=\omega_\rho q^{m\delta/2}$ 
where $m\in \Z$. (Here, we assume that a square root $q^{1/2}\in
\overline{\Q}_\ell$ has been fixed.) We call $\omega_\rho$ the Frobenius 
eigenvalue of $\rho$. We shall regard $\omega_\rho$ as an element of~$\C$
(via our chosen embedding of the algebraic numbers in $\overline{\Q}_\ell$ 
into $\C$; see \ref{abs11}).
\end{rema}

\begin{rema} \label{abs42}
Assume that $G/Z(G)$ is simple or $\{1\}$. Then $W=\{1\}$ or $W\neq \{1\}$
is an irreducible Weyl group. We now define a subset $\fX^\circ(W,\sigma)
\subseteq \C^\times\times\Z$, which only depends on the pair $(W,\sigma)$. 
Assume first that $\sigma$ is the identity; then we write $\fX^\circ(W)=
\fX^\circ(W,\id)$. If $W=\{1\}$, then $\fX^\circ(W)=\{(1,1)\}$. Now 
let $W\neq \{1\}$.  Then the sets $\fX^\circ(W)$ are given as follows.
\begin{itemize}
\item Type $A_n$ ($n \geq 1$): $\fX^\circ(W)= \varnothing$.
\item Type $B_n$ or $C_n$ ($n \geq 2$): $\fX^\circ(W)=\{((-1)^{n/2},
2^l)\}$ if $n= l^2+l$ for some integer $l \geq 1$, and $\fX^\circ(W)=
\varnothing$ otherwise.
\item Type $D_n$ ($n \geq 4$): $\fX^\circ(W)=\{((-1)^{n/4}, 
2^{2l-1}\}$ if $n=4l^2$ for some integer $l \geq 1$, and 
$\fX^\circ(W)= \varnothing$ otherwise.
\item Type $G_2$: $\fX^\circ(W)=\{(1,6),(-1,2),
(\theta,3), (\theta^2,3)\}$.
\item Type $F_4$: $\fX^\circ(W)=\{(1,8),(1,24),
(-1,4), (\pm i, 4), (\theta,3),(\theta^2,3)\}$.
\item Type $E_6$: $\fX^\circ(W)=\{(\theta,3), (\theta^2,3)\}$.
\item Type $E_7$: $\fX^\circ(W)=\{(\pm i,2)\}$.
\item Type $E_8$: $\fX^\circ(W)=\{(1,8)$, $(1,120)$,
$(-1,12)$, $(\pm i, 4)$, $(\pm \theta,6)$, $(\pm \theta^2,6)$,
$(\zeta,5)$, $(\zeta^2,5)$, $(\zeta^3,5)$, $(\zeta^4,5)\}$.
\end{itemize}
Here, $\theta$, $i$, $\zeta\in\C$ denote fixed primitive roots of unity
of order $3$, $4$, $5$, respectively. 

Now assume that $\sigma$ is not the identity; let $\delta\geq 2$ be the
order of $\sigma$. Then the sets $\fX^\circ(W,\sigma)$ are given as follows.
\begin{itemize}
\item Type $A_n$ ($n\geq 2$) and $\delta=2$:  
$\fX^\circ(W,\sigma)=\{((-1)^{\lfloor (n+1)/2\rfloor},1)\}$ if 
$n+1=l(l-1)/2$ for some integer $l \geq 1$, and $\fX^\circ(W,\sigma)=
\varnothing$ otherwise.
\item Type $D_n$ ($n\geq 4$) and $\delta=2$: $\fX^\circ(W,\sigma)=
\{(1,2^{2l})\}$ if $n=(2l+1)^2$ for some integer 
$l \geq 1$, and $\fX^\circ(W,\sigma)=\varnothing$ otherwise.
\item Type $D_4$ and $\delta=3$: $\fX^\circ(W,\sigma)=\{(\pm 1,2)\}$.
\item Type $E_6$ and $\delta=2$: $\fX^\circ(W,\sigma)=\{(1,6),
(\theta,3),(\theta^2,3)\}$.
\end{itemize} 
Let us denote $\fU^\circ(G^F):=\{\rho \in \fU(G^F) \mid \rho 
\mbox{ cuspidal}\}$. Now we can state:
\end{rema}

%\medskip
%{\bf Step 3}. Assume that $W$ is not irreducible but $\sigma$-irreducible,
%that is, $W$ is the direct product of $h\geq 2$ irreducible factors
%which are cyclicly permuted by $\sigma$. Then we set 
%$\fS_{W,\sigma}^\circ:=\fS_{W',\sigma^h}^\circ$ where $W'$ is an
%irreducible direct factor of $W$. (Note that $\sigma^h(W')=W'$ and that 
%all direct factors of $W$ are isomorphic Weyl groups.)
%
%\medskip
%{\bf Step 4}. In general, we can write $W=W_1\times \ldots \times W_r$ 
%where $W_i$ are $\sigma$-stable Weyl groups which are 
%$\sigma_i$-irreducible, where $\sigma_i$ denotes the restriction of 
%$\sigma$ to $W_i$. Then we set $\fS_{W,\sigma}^\circ:=
%\fS_{W_1,\sigma_1}^\circ\times \ldots \times \fS_{W_r,\sigma_r}^\circ$.
%

\begin{thm}[Lusztig \protect{\cite{L10a}}] \label{thm1} Assume that 
$G/Z(G)$ is simple or $\{1\}$. There exists a unique bijection
$\fX^\circ(W,\sigma)\stackrel{\sim}{\longrightarrow} \fU^\circ(G^F)$ with 
the following property. If $\rho \in \fU^\circ(G^F)$ corresponds to 
$x=(\omega,m)\in \fX^\circ(W,\sigma)$, then $\omega=\omega_\rho$ 
(see \ref{abs43}), $m=n_\rho$ (see \ref{abs32}).
\end{thm}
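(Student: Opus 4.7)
The plan is to proceed by a case-by-case reduction to the irreducible Weyl group situation. First I would invoke the reduction from \ref{abs41}: combined with the product decomposition of $G$ into $F$-simple factors and the restriction-of-scalars isomorphism $\iota_i \colon H_i^{F_i} \xrightarrow{\sim} G_i^F$, cuspidality is preserved and so we may assume $G/Z(G)$ is simple (or trivial), i.e.\ $W$ is $\{1\}$ or irreducible. The invariants $\omega_\rho$ and $n_\rho$ are preserved under these isomorphisms (the first by definition of the variety $X_w$, the second because $\D_\rho$ factors according to the direct product decomposition of $\G$).

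Next, for each irreducible pair $(W,\sigma)$ I would appeal to Lusztig's explicit classification of cuspidal unipotent characters contained in \cite[Main Thm.~4.23]{L1} and its refinement in \cite[\S 3]{L10a}. For every such pair the list $\fU^\circ(G^F)$ is known explicitly: for type $A_n$ (split or twisted) a cuspidal unipotent character exists only in the triangular-number cases; for classical types $B_n$, $C_n$, $^2\!D_n$ at most one occurs, at the integer values of the exceptional degree; for exceptional types one reads off the cuspidal unipotent characters from Lusztig's tables. One then computes $\omega_\rho$ and $n_\rho$ for each such $\rho$. The Frobenius eigenvalue $\omega_\rho$ is extracted from the eigenvalues of $F^\delta$ on $H_c^\bullet(X_w)$, as in \ref{abs43}, and has been tabulated by Lusztig (via \cite[III.2.3]{DiMi0}); the integer $n_\rho$ is the denominator of the generic degree polynomial $\D_\rho$ in \ref{abs32}, again available from Lusztig's tables of unipotent character degrees.

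The bijection is then produced by checking, type by type, that the list of pairs $(\omega_\rho, n_\rho)$ arising from the cuspidal unipotent characters coincides term-by-term with the list $\fX^\circ(W,\sigma)$ given in \ref{abs42}, and that this assignment is injective. Existence follows because the cardinalities of the two sets match in each case, while uniqueness follows because the tabulated values $(\omega_\rho, n_\rho)$ are pairwise distinct. The main obstacle is precisely this verification: it is not an abstract argument but the end point of a substantial classification, requiring the full force of Lusztig's analysis of cuspidal unipotent characters (including the explicit determination of Frobenius eigenvalues, which is delicate in the twisted types $^2\!E_6$ and $^3\!D_4$, and the computation of the $n_\rho$ from the generic degree formulae in \cite[Chap.~4]{L1}). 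Once these tables are in hand, however, the correspondence $\rho \mapsto (\omega_\rho, n_\rho)$ can be read off directly, and the uniqueness of the bijection is automatic from the injectivity observed in the tables.
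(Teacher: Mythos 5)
Your approach is essentially the one the paper points to: the statement is not proved by an abstract argument but by comparing two explicit tables, one of cuspidal unipotent characters with their invariants $(\omega_\rho, n_\rho)$ (from Lusztig's classification, Carter's tables of cuspidal degrees, and the references given for Frobenius eigenvalues), the other being the combinatorial list $\fX^\circ(W,\sigma)$ of \ref{abs42}, and then observing that the assignment $\rho \mapsto (\omega_\rho,n_\rho)$ is a bijection. That is exactly what the Remark following the theorem describes, so the core plan is right.

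Two smaller points are worth flagging. First, your opening paragraph invoking \ref{abs41} to ``reduce to $G/Z(G)$ simple'' is out of place here: the theorem statement already assumes $G/Z(G)$ is simple or trivial, and \ref{abs41} is the reduction used elsewhere (before Theorem~\ref{thm1}) to bring a general $G$ to this situation. What is actually implicit and would need checking is that for a fixed irreducible pair $(W,\sigma)$ the invariants $\omega_\rho$ and $n_\rho$ depend only on $(W,\sigma)$ and not on the particular isogeny type of $G$; this is the content one reads from Lusztig's Main Theorem 4.23, and your parenthetical remarks about $X_w$ and the factorisation of $\D_\rho$ gesture at the right reasons. Second, your choice of ``delicate'' cases is slightly off relative to the paper's emphasis: the paper singles out $^2\!A_n$ as the twisted case where determining $\omega_\rho$ requires an extra argument, and it singles out $F_4$ and $E_8$ (not $^2\!E_6$ or $^3\!D_4$) as the only types where the second component $m=n_\rho$ is genuinely needed to separate two cuspidal unipotent characters that share the Frobenius eigenvalue $1$. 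Your proposal covers this implicitly by asserting that the pairs $(\omega_\rho,n_\rho)$ are pairwise distinct, but it helps to say explicitly that this is the sole role of the integer $m$.
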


\begin{rem} (a) The second component of the pairs in $\fX^\circ(W,\sigma)$ 
is only needed to distinguish the two cuspidal unipotent characters in types
$F_4$, $E_8$ which both have Frobenius eigenvalue~$1$. Lusztig 
\cite[3.3]{L10a}, \cite[\S 18]{L11} uses slightly different methods
to achieve this distinction. A similar unicity statement is 
contained  in \cite[\S 6]{DiMi1}.

(b) The proof of Theorem~\ref{thm1} relies on the explicit knowledge of 
the degree polynomials $\D_\rho$ and of the Frobenius eigenvalues 
$\omega_\rho$ in all cases. Tables with the degrees of the cuspidal 
unipotent characters can be found in \cite[13.7]{C2}. If $\sigma=\id$, 
then $\omega_\rho$ is determined in \cite[11.2]{L1}. If $\sigma\neq \id$, 
then this can be extracted from \cite[7.3]{L76a}, \cite[3.33]{Lu2}, 
\cite[\S 4]{GM2}. (The case where $W$ is of type $A_n$ and $\delta=2$ can 
also be dealt with by a similar argument as in the proof of \cite[4.11]{GM2}.)
The explicit knowledge of the degree polynomials also shows that 
the function $\rho \mapsto a_\rho$ is constant on $\fU^\circ(G^F)$.
\end{rem}

\begin{rema} \label{abs44} Next we use the fact that the classification
of $\fU(G^F)$ can be reduced to the classification of $\fU^\circ(G^F)$,
using the concept of Harish-Chandra induction as in \ref{parab}. (See 
Lusztig \cite[3.25]{Lu2} for a detailed explanation of this reduction.)
Thus, if $G/Z(G)$ is simple, then we have a partition
\[\fU(G^F)=\bigsqcup_{(J,\phi)} \Irr(G^F|J,\phi);\]
here, the union runs over {\it all} pairs $(J,\phi)$ such that $J \in
\cP(S)^\sigma$ and $\phi \in \fU^\circ(L_J^F)$; also note that, if 
$\fU^\circ(L_J^F)\neq \varnothing$, then $L_J/Z(L_J)$ is simple or 
$\{1\}$. Furthermore, the stabiliser $W_J(\phi)$ in Proposition~\ref{luhc} 
now has a more explicit description:
\[ W_J(\phi)=\cW^{S/J}:=\{w\in W\mid\sigma(w)=w\mbox{ and } wJw^{-1}=J\};\]
this is a Weyl group with simple reflections in bijection with the orbits 
of $\sigma$ on $S\setminus J$ (see \cite[8.2, 8.5]{L1}). As before, we 
have a natural bijection
\[\Irr(\cW^{S/J}) \stackrel{\sim}{\longrightarrow} \Irr(G^F|J,
\phi), \qquad \epsilon \mapsto \phi[\epsilon],\]
(depending on the choice of a square root of $q$ in $\C$). Now let 
\[ \fX(W,\sigma):=\{(J,\epsilon,x)\mid J \in \cP(S)^\sigma, \; 
\epsilon \in \Irr(\cW^{S/J}), \; x\in \fX^\circ(W_J,\sigma)\}.\]
We have an embedding $\fX^\circ(W,\sigma)\hookrightarrow
\fX(W,\sigma)$, $x\mapsto (S,1,x)$. 
\end{rema}

\begin{cor}[Lusztig \protect{\cite{L10a}}] \label{cor1} Assume that 
$G/Z(G)$ is simple or $\{1\}$. There exists a unique bijection 
$\fX(W,\sigma)\stackrel{\sim}{\longrightarrow} \fU(G^F)$ with the following 
property. If $\rho \in \fU(G^F)$ corresponds to $(J,\epsilon,x)\in 
\fX(W,\sigma)$, then $\rho=\phi[\epsilon]$ where $\phi\in 
\fU^\circ(L_J^F)$ corresponds to~$x$ under the bijection in 
Theorem~\ref{thm1}.
\end{cor}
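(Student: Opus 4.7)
The plan is to combine the Harish--Chandra partition of $\fU(G^F)$ described in \ref{abs44} with the bijection for cuspidal unipotents given by Theorem~\ref{thm1}, applied to each standard Levi $L_J$. Concretely, I would start from the disjoint decomposition
\[
\fU(G^F)=\bigsqcup_{(J,\phi)} \Irr(G^F|J,\phi)
\]
where $J$ runs over $\cP(S)^\sigma$ and $\phi$ over $\fU^\circ(L_J^F)$. For each such pair, the natural parametrization $\epsilon\mapsto \phi[\epsilon]$ from $\Irr(\cW^{S/J})$ to $\Irr(G^F|J,\phi)$ recalled in \ref{abs44} is a bijection once the square root of $q$ is fixed. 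Thus the datum $(J,\phi,\epsilon)$ already parametrizes $\fU(G^F)$ bijectively.

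Next I would replace the cuspidal unipotent $\phi$ by its combinatorial label. Since $\fU^\circ(L_J^F)\neq\varnothing$ forces $L_J/Z(L_J)$ to be simple or $\{1\}$, Theorem~\ref{thm1} applies to $L_J^F$ and supplies a canonical bijection $\fX^\circ(W_J,\sigma)\stackrel{\sim}{\longrightarrow}\fU^\circ(L_J^F)$, $x\mapsto\phi_x$. Substituting $\phi=\phi_x$ into the Harish--Chandra parametrization yields the map
\[
\fX(W,\sigma)=\{(J,\epsilon,x)\}\longrightarrow \fU(G^F),\qquad
(J,\epsilon,x)\longmapsto \phi_x[\epsilon],
\]
which is a bijection because each of the two ingredient bijections is. This establishes existence.

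For uniqueness, suppose another bijection satisfies the stated compatibility. On each Harish--Chandra series $\Irr(G^F|J,\phi)$ the image of $(J,\epsilon,x)$ is forced to equal $\phi_x[\epsilon]$: the cuspidal support $\phi$ is determined by $J$ and $x$ via the unique bijection from Theorem~\ref{thm1}, and within $\Irr(G^F|J,\phi)$ the character $\phi[\epsilon]$ is determined by $\epsilon$ via the parametrization recalled in \ref{abs44} (which uses the fixed square root of $q$). So both bijections coincide.

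The only real obstacle is that all the nontrivial content has been placed upstream: one must invoke (i) the Harish--Chandra decomposition together with the Howlett--Lehrer / Lusztig parametrization in the stronger form of \ref{abs44} valid for unipotent characters without any hypothesis on $Z(G)$ (see \cite[8.2, 8.5]{L1}), and (ii) the uniqueness clause of Theorem~\ref{thm1}. Given these, the corollary is essentially a book-keeping statement, and I expect the only point requiring care will be to check that the description $W_J(\phi)=\cW^{S/J}$ in \ref{abs44} is indeed independent of the choice of cuspidal unipotent $\phi\in\fU^\circ(L_J^F)$, so that the indexing set $\fX(W,\sigma)$ truly depends only on $(W,\sigma)$ and not on which $\phi$ was chosen for a given $J$.
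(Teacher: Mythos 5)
Your proposal is correct and follows the same route as the paper, which presents Corollary~\ref{cor1} as an immediate consequence of the Harish--Chandra decomposition and parametrization recalled in \ref{abs44} combined with Theorem~\ref{thm1} applied to each standard Levi $L_J$. Your closing concern about the independence of $\cW^{S/J}$ from the choice of cuspidal $\phi\in\fU^\circ(L_J^F)$ is a valid point to flag, and is exactly what the paper's citation of \cite[8.2, 8.5]{L1} is meant to settle.
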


In this picture, those $\rho \in \fU(G^F)$ which occur in $R_1^1$ (the 
character of the permutation module $\C[G^F/B_0^F]$) correspond to triples 
$(J,\epsilon,x)$ where $J=\varnothing$, $x=(1,1)$ and $\epsilon\in 
\Irr(W^\sigma)$. (Note that $\cW^{S/\varnothing}=W^\sigma$.) If $\epsilon$
is the trivial character, then $\rho=1_G$ is the trivial character of~$G^F$; 
if $\epsilon$ is the sign character, then $\rho=\mbox{St}_G$ is the 
Steinberg character of~$G^F$.  (See, e.g., \cite[\S 68B]{CR2}.) At the other
extreme, the cuspidal unipotent characters of $G^F$ correspond to triples 
$(J,\epsilon,x)$ where $J=S$, $\epsilon=1$ and $x \in \fX^\circ(W,\sigma)$.
(Note that $\cW^{S/S}=\{1\}$.)

\begin{rema} \label{abs45} If $G/Z(G)$ is not simple or $\{1\}$, then 
consider the reduction arguments in \ref{abs41}. Thus, we obtain a 
natural bijection
\[\fU(G^F) \stackrel{\sim}{\longrightarrow} \fU(H_1^{F_1})\times \ldots 
\times \fU(H_r^{F_r}),\]
where each $H_i\subseteq \Gad$ is a simple algebraic group and $F_i=
F^{h_i}|_{H_i} \colon H_i\rightarrow H_i$ for some $h_i \geq 1$. Let $W_i$ 
be the Weyl group of $H_i$ and $\sigma_i$ be the automorphism of $W_i$ 
induced by $F_i$. Then Corollary~\ref{cor1} yields a bijection
\begin{equation*}
\fU(G^F) \stackrel{\sim}{\longrightarrow} \fX(W,\sigma):=
\fX(W_1,\sigma_1)\times \ldots \times \fX(W_r,\sigma_r).\tag{a}
\end{equation*}
By \cite[Main Theorem~4.23]{L1}, there exist integers $m_{\hat{x},w}\in \Z$ 
(for $\hat{x} \in \fX(W,\sigma)$ and $w \in W$), which only depend on the 
pair $(W,\sigma)$, with the following property. If $\rho \in \fU(G^F)$ 
corresponds to $\hat{x} \in \fX(W,\sigma)$ under the bijection in (a), then 
\begin{equation*}
\langle R_w^1,\rho\rangle=m_{\hat{x},w} \qquad \mbox{for all $w \in W$}.
\tag{b}
\end{equation*}
Furthermore, there are explicit formulae for $m_{\hat{x},w}$ in 
terms of Lusztig's ``non-abelian Fourier matrices'' (which first appeared
in \cite[\S 4]{lue8}), the function $\Delta\colon \fX(W,\sigma)\rightarrow
\{\pm 1\}$ in \cite[4.21]{L1}, and the (``$\sigma$-twisted'') character 
table of $W$. Finally, let $\hat{x}_0\in \fX(W,\sigma)$ correspond to the
trivial character $1_G \in\fU(G^F)$. Then  
\begin{equation*}
\mbox{$\hat{x}_0$ is uniquely determined by the condition that $m_{\hat{x}_0,
w}=1$ for all $w \in W$}.\tag{c}
\end{equation*} 
Indeed, $\langle R_w^1,1_G\rangle=1$ for all $w \in W$; see 
Remark~\ref{uni1}. On the other hand, if we also have $m_{\hat{x},w}=1$ 
for some $\hat{x} \in \fX(W,\sigma)$, then $\langle R_w^1,\rho\rangle=1$ 
for the corresponding $\rho\in \fU(G^F)$. But then $\langle 1_G,\rho
\rangle=\frac{1}{|W|}\sum_{w \in W} \langle  R_w^1,\rho\rangle=1$ and so
$\rho=1_G$, hence $\hat{x}=\hat{x}_0$. 
\end{rema}

\begin{rem} \label{abs46} For $\rho \in \fU(G^F)$, we denote by
$\Q(\rho)=\{\rho(g) \mid g \in G^F\} \subseteq \C$ the character field
of $\rho$. Then $\Q(\rho)$ is explicitly known in all cases; see
\cite{myschur}, \cite{Lurat}. Assume that $G/Z(G)$ is simple.
Let $J\in \cP(S)^F$ and $\phi\in \fU^\circ(L_J^F)$ be cuspidal such
that $\rho \in \fU(G^F|J,\phi)$. Then $\Q(\rho) \subseteq \Q(q^{1/2},
\omega_\phi)$, where $q^{1/2}$ is only needed for certain $\rho$ for 
types $E_7,E_8$; see \cite[5.4, 5.6]{myschur} for further details.
\end{rem}

\begin{rem} \label{mult1} One may wonder what general statements about 
the multiplicities $m_{\hat{x},w}$ could be made. For example, is it true
that, for any $\rho \in \fU(G^F)$, there exists some $w \in W$ such that
$\langle R_w^1,\rho\rangle=\pm 1$~? In \cite[p.~356]{L1}, there is an 
example of a cuspidal unipotent character (for the Ree group of type 
${^2\!F}_4$) which has even multiplicity in all $R_w^1$. There are also
examples (e.g., in type $C_4$) of non-cuspidal unipotent characters which 
have even multiplicity in all $R_w^1$; see Lusztig \cite[2.21]{Lurat}.
\end{rem}
%see \cite{} for all cases where $\Q(\rho)=\Q$ and further references there. 
%Let $G^F=E_7(q)$ and $\rho\in \fU^\circ(G^F)$ corresponding
%to pair $(\pm i,2)\in \fX_{W,\id}^\circ$.
%Ohmori???

%
%\begin{prop} \label{field} Let $\rho \in \fU(G^F)$. Then 
%$\omega_\rho \in \Q(\rho)$.????
%\end{prop}
%
%????\cite[3.9]{L10a}: similar bijection with unipotent character sheaves,
%then in \cite[3.10]{L10a}: canonical bijection between $\fU(G^F)$
%and unipotent character sheaves. Much more conceptual in \cite{L12}.
%Also \cite{L13}.

%%%%%%%%%%%%%%%%%%%%%%%%%%%%%%%%%%%%%%%%%%%%%%%%%%%%%%%%%%%%%%%%%%%%%%%
\section{Jordan decomposition (connected centre)} \label{secj}

The Jordan decomposition reduces the problem of classifying the irreducible 
characters of $G^F$ to the classification of unipotent characters for 
certain ``smaller'' groups associated with $G$. If the centre $Z(G)$ is 
connected, then this is achieved by Lusztig's Main Theorem~4.23 in 
\cite{L1}; in \cite{Lu5}, \cite{Lu08a} this is extended to the general case. 
This section is meant to give a first introduction into the formalism of 
Lusztig's book \cite{L1}. We try to present this here in a way which avoids 
the close discussion of the underlying technical apparatus, which is quite 
elaborate. 

Recall from \ref{dlgraph} the definition of the graph $\cG(G^F)$. We have seen
in Example~\ref{reguni} that every connected component of this graph contains 
at least one vertex which is labeled by a semisimple character of $G^F$. 
The first step is to clarify this situation. 

\begin{defn} \label{defsc} We set $\cS(G^F):=\{\rho_0\in\Irr(G^F)\mid 
\rho_0 \mbox{ semisimple}\}$. For $\rho_0\in \cS(G^F)$, we define 
\[\cE(\rho_0):=\{\rho\in\Irr(G^F)\mid \langle R_w^\theta,\rho\rangle\neq 0
\mbox{ and }\langle R_w^\theta, \rho_0\rangle\neq 0 \mbox{ for some 
$w,\theta$}\}.\]
The following result is already contained in the original article of 
Deligne--Lusztig \cite{DeLu} (see Proposition~\ref{ratser1} below for the 
case where $Z(G)$ is not connected).
\end{defn}

\begin{thm}[Deligne--Lusztig \protect{\cite[\S 10]{DeLu}}] \label{agree1}
Assume that $Z(G)$ is connected. 
\begin{itemize}
\item[(a)] Every connected component of $\cG(G^F)$ contains a unique
semisimple character. If $\rho_0\in\cS(G^F)$, then $\cE(\rho_0)$ is a
connected component of $\cG(G^F)$.
\item[(b)] We have a partition $\Irr(G^F)={\displaystyle \bigsqcup}_{\rho_0 
\in\cS(G^F)} \cE(\rho_0)$.
\item[(c)] The partition of $\Irr(G^F)$ in (b) corresponds precisely to the
partition into ``geometric conjugacy classes'' (as defined in 
\cite[\S 10]{DeLu}, \cite[\S 12.1]{C2}).
\end{itemize}
\end{thm}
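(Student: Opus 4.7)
The plan is to combine three ingredients: the Deligne--Lusztig disjointness theorem (which immediately gives a refinement of the connected-component partition), an explicit construction of a semisimple character in each geometric conjugacy class via the dual group, and the fact that every connected component of $\cG(G^F)$ already contains at least one semisimple character (Example~\ref{reguni}). Connectedness of $Z(G)$ enters only at the second step.

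First, I would pass to the dual setup: let $(G^*,F^*)$ be a Langlands-dual pair with $F^*$-stable maximal torus $T_0^*$, so that $G^F$-conjugacy classes of pairs $(T_w,\theta)$ correspond bijectively to $(G^*)^{F^*}$-conjugacy classes of pairs $(T_w^*,s)$ with $s\in (T_w^*)^{F^*}$ semisimple, and geometric conjugacy of $(T_w,\theta)$ and $(T_{w'},\theta')$ translates into $G^*$-conjugacy of the corresponding $s,s'\in G^*$. I would then invoke the disjointness theorem \cite[Cor.~6.3]{DeLu}: if $(T_w,\theta)$ and $(T_{w'},\theta')$ are not geometrically conjugate, then $R_w^\theta$ and $R_{w'}^{\theta'}$ share no irreducible constituent. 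Combined with Proposition~\ref{dim2} (every $\rho\in\Irr(G^F)$ occurs in some $R_w^\theta$), this already shows that the geometric conjugacy classes form a partition of $\Irr(G^F)$ and that every connected component of $\cG(G^F)$ is contained in a single geometric class. Hence (b) and (c) are reduced to proving (a).

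For (a), I would use the construction in \cite[\S 10]{DeLu}: because $Z(G)$ is connected, the centraliser $C_{G^*}(s)$ is connected for every semisimple $s \in G^*$, so its Weyl group $W^\circ(s):=N_{C_{G^*}(s)}(T_0^*)/T_0^*$ is a genuine subgroup of $W$ with a well-defined $F^*$-twisted action. For each $s\in (G^*)^{F^*}$, one forms the class function
\[ \rho_s:=\frac{\pm 1}{|W^\circ(s)^{F^*}|}\sum_{w\in W^\circ(s)^{F^*}} R_w^{\hat{s}_w},\]
where $\hat s_w\in\Irr(T_0[w])$ corresponds to~$s$ under duality and the sign is the standard Deligne--Lusztig sign $\varepsilon_G\varepsilon_{C_{G^*}(s)}$. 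Using Proposition~\ref{scform} one checks $\langle\rho_s,\rho_s\rangle=1$, so $\rho_s\in\Irr(G^F)$; the Deligne--Lusztig character formula together with \cite[Thm.~9.16]{DeLu} shows that $\rho_s$ takes value $\pm 1$ on every regular unipotent element, so $\rho_s$ is semisimple in the sense of Example~\ref{reguni}. Distinct $G^*$-classes of $s$ give distinct $\rho_s$, since they already lie in distinct geometric classes by the disjointness step, so $s\mapsto\rho_s$ is an injection from $(G^*)^{F^*}$-orbits of semisimple elements into $\cS(G^F)$, and its image meets every geometric class. Combined with Example~\ref{reguni}, which says the image must cover all of $\cS(G^F)$, this forces each geometric class to contain exactly one semisimple character, and $\cE(\rho_s)$ is the entire geometric class of $s$; (a) follows, and then (c) is immediate.

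I expect the main obstacle to be the verification that $\langle\rho_s,\rho_s\rangle=1$ and, relatedly, that the sign in the definition of $\rho_s$ is the correct one. Via Proposition~\ref{scform} this reduces to a counting identity for the number of elements of $W^\circ(s)$ that $F^*$-twist-conjugate a given $w$ into $W^\circ(s)^{F^*}$, and the identity holds only because $C_{G^*}(s)$ is connected: otherwise the ``component group'' $C_{G^*}(s)/C_{G^*}^\circ(s)$ would intervene in the stabiliser computation and one would generally obtain either zero or several semisimple characters per geometric class. This is exactly the obstruction addressed by the regular-embedding techniques of Section~\ref{secj1}.
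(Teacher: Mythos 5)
Your proof plan diverges from the paper's in a significant way, and in the place where it diverges there is a genuine gap.

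The paper does \emph{not} attempt to prove the key input, namely that (for $Z(G)$ connected) every geometric conjugacy class of characters contains \emph{precisely one} semisimple character; it simply quotes this as a ``crucial fact'' from \cite[10.7]{DeLu} and \cite[8.4.6]{C2}, and then derives (a)--(c) from it by a short combinatorial argument using Example~\ref{reguni} and the implications in \ref{dlgraph}. You, on the other hand, try to establish the crucial fact from scratch by constructing the semisimple characters $\rho_s$ via the dual group. Your construction of $\rho_s$ and the check that $\langle\rho_s,\rho_s\rangle=1$ and that $\rho_s$ is semisimple are all fine and in the spirit of \cite[\S 10]{DeLu} (cf.\ Remark~\ref{sschar} and Example~\ref{reguni}). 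That part gives \emph{existence}: every geometric conjugacy class contains at least one semisimple character.

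What is missing is the \emph{uniqueness} direction (``at most one''), and this is where your argument breaks down. You write that ``Example~\ref{reguni}\dots says the image must cover all of $\cS(G^F)$''; but Example~\ref{reguni} says nothing of the sort. It only asserts that every connected component of $\cG(G^F)$ contains at least one semisimple character; it gives no upper bound on $|\cS(G^F)|$ and does not say that every semisimple character is one of your $\rho_s$. With only the ingredients you list (disjointness, the construction of the $\rho_s$, Example~\ref{reguni}) one obtains the chain of inequalities (\# geometric classes) $\leq$ (\# connected components of $\cG(G^F)$) $\leq |\cS(G^F)|$ and also (\# geometric classes) $\leq |\cS(G^F)|$ from the injection $s\mapsto\rho_s$; none of these can be turned into an equality, so nothing forces a geometric class that splits into several connected components to have only one semisimple character. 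In the sources the paper cites, the ``at most one'' direction is proved by a genuinely different mechanism (duality with Gelfand--Graev characters / the characterization of semisimple characters as the $\pm$-constituents of the unique Gelfand--Graev character when $Z(G)$ is connected), and your plan does not invoke anything equivalent. If you want to avoid the Gelfand--Graev route, the honest thing is to do what the paper does and quote \cite[10.7]{DeLu} or \cite[8.4.6]{C2} for the crucial fact; once you have it, the derivation of (a) that you sketch (via Example~\ref{reguni} and disjointness) is correct and matches the paper.

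A smaller point: in your formula for $\rho_s$ you sum over $W^\circ(s)^{F^*}$ and normalize by $|W^\circ(s)^{F^*}|$, but the correct sum (cf.\ Remark~\ref{sschar}) is over the coset $Z_{\lambda,n}=w_1W_{\lambda,n}$, i.e.\ over all $F^*$-twisted classes of maximal tori in $C_{G^*}(s)$, with normalization by $|W_{\lambda,n}|=|W^\circ(s)|$; the set of $\sigma$-fixed points $W^\circ(s)^{F^*}$ is generally smaller and is not what appears in the definition of the semisimple character.
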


\begin{proof} All that we need to know about ``geometric conjugacy classes''
(in addition to the implications in \ref{dlgraph}) is the following crucial
fact: if $Z(G)$ is connected, then every geometric conjugacy class of 
characters contains precisely one semisimple character; see 
\cite[10.7]{DeLu}, \cite[8.4.6]{C2}. Now we can argue as follows. We 
have already remarked in \ref{dlgraph} that every geometric conjugacy 
class of characters is a union of connected components of $\cG(G^F)$. We 
also know that every connected component of $\cG(G^F)$ has at least one 
vertex labeled by a semisimple character. So the above crucial fact shows 
that the partition of $\Irr(G^F)$ defined by the graph $\cG(G^F)$ 
corresponds precisely to the partition into geometric conjugacy classes.

In order to complete the proof, it now remains to show that $\cE(\rho_0)$,
for $\rho_0\in\cS(G^F)$, is a connected component of $\cG(G^F)$. This 
is seen as follows. Clearly, $\cE(\rho_0)$ is contained in a connected
component. Conversely, consider any $\rho\in\Irr(G^F)$ such that $\rho,
\rho_0$ belong to the same connected component. Let $w,\theta$ be such 
that $\langle R_w^\theta,\rho\rangle\neq 0$. By Example~\ref{reguni},
there exists some $\rho_0'\in\cS(G^F)$ such that $\langle R_w^\theta,
\rho_0'\rangle\neq 0$. Then $\rho_0,\rho_0'$ belong to the same
connected component, hence to the same geometric conjugacy class and, 
hence, $\rho_0=\rho_0'$, again by the crucial fact above.
\end{proof}

The next step consists of investigating a piece $\cE(\rho_0)$ in the
partition in Theorem~\ref{agree1}(b). The basic idea is as follows. We 
wish to associate with $\rho_0$ a subgroup $W'\subseteq W$ which should
be itself a Weyl group (i.e., generated by reflections of $W$); furthermore, 
there should be an induced automorphism $\gamma \colon W'\rightarrow W'$ 
such that we can apply the procedure in the previous section to form the 
set $\fX(W',\gamma)$. Then, finally, there should be a bijection $\cE
(\rho_0)\leftrightarrow \fX(W',\gamma)$ satisfying some further 
conditions. Now, in order to associate $W'$ with $\rho_0$, we have to
use in some way the underlying algebraic group~$G$.

\begin{rema} \label{abs50} For the discussion to follow, we need to fix
an embedding $\psi \colon k^\times \hookrightarrow \C^\times$.

This can be obtained as follows. Recall that $k=\overline{\F}_p$. Let $\A$ 
be the ring of algebraic integers in $\C$. Then one can find a surjective 
homomorphism of rings $\kappa \colon \A \rightarrow k$, which we fill fix 
from now on. (A definite choice of $\kappa$ could be made, for example,
using a construction of $k$ via Conway polynomials; see \cite[\S 4.2]{lupa}.
Another way to make this canonical is described by Lusztig 
\cite[\S 16]{twelve}.) Let $\mu_{p'}$ be the group of all roots of unity in 
$\C$ of order prime to~$p$. Then $\kappa$ restricts to an isomorphism 
$\mu_{p'}\stackrel{\sim}{\longrightarrow} k^\times$, and we let $\psi
\colon k^\times \stackrel{\sim}{\longrightarrow}\mu_{p'}\subseteq\C^\times$ 
be the inverse isomorphism. 
\end{rema}

%Furthermore, the exponential map induces a 
%group isomorphism
%\[ \exp\colon (\Q/\Z)_{p'}\stackrel{\sim}{\longrightarrow} \mu_{p'},
%\qquad n/m+\Z\mapsto \exp(2\pi i n/m),\]
%where $(\Q/\Z)_{p'}$ is the group of elements of $\Q/\Z$ of order prime to
%$p$. Hence, there is a unique group isomorphism $\iota\colon k^\times
%\rightarrow (\Q/\Z)_{p'}$ such that $\psi=\exp\circ \iota$.
%Thus, having fixed $\psi$, we can canonically
%identify $\Irr(\Gamma)=\Hom(\Gamma,k^\times)$, where $\Gamma$ is any 
%finite abelian group of order prime to~$p$. 

\begin{rema} \label{abs51} We can now relate $\Irr(T_0[w])$ (for $w\in W$) 
to elements in the character group $X=X(T_0)$ (see \ref{abs31}). The map 
$F'\colon T_0 \rightarrow T_0$, $t \mapsto \dot{w}F(t)\dot{w}^{-1}$, 
can also be regarded as a Frobenius map on $T_0$, and we have $T_0[w]=
T_0^{F'}$; see \cite[10.9]{St68} or \cite[1.4.13]{gema}. There is an 
induced map $X\rightarrow X$, $\lambda \mapsto \lambda \circ F'$, as in 
\ref{abs31}. Then, by \cite[(5.2.2)$^*$]{DeLu} (see also \cite[3.2.3]{C2}
or \cite[13.7]{DiMi2}), we have an exact sequence
\begin{equation*}
\{0\}\longrightarrow X\stackrel{F'{-}\id_X}{\xrightarrow{\hspace*{30pt}}} X 
\longrightarrow \Irr(T_0[w])\longrightarrow\{1\},\tag{a}
\end{equation*}
where the map $X\rightarrow \Irr(T_0[w])$ is given by restriction, followed 
by our embedding $\psi\colon k^\times \hookrightarrow \C^\times$. Now we 
proceed as follows, see Lusztig \cite[6.2]{L1}. Let $\theta\in
\Irr(T_0[w])$ and $n \geq 1$ be the smallest integer such that $\theta(t)^n
=1$ for all $t \in T_0[w]$. Then $p\nmid n$ and the values of $\theta$ lie 
in the image of our embedding $\psi\colon k^\times \hookrightarrow \C^\times$. 
So we can write $\theta=\psi\circ \bar{\theta}$ where $\bar{\theta}\colon 
T_0[w] \rightarrow k^\times$ is a group homomorphism. By (a), there 
exists some $\lambda_1\in X$ such that $\bar{\theta}$ is the restriction
of $\lambda_1$. Now $\theta^n=1$ and so $n\lambda_1$ is in the image of
the map $F'-\id_X\colon X\rightarrow X$. So there exists some $\lambda' 
\in X$ such that $\lambda'\circ F'-\lambda'=n\lambda_1$. Setting
$\lambda:=\underline{w}^{-1}(\lambda') \in X$, we obtain
\begin{equation*}
\lambda_1(t^n)=(n\lambda_1)(t)=\lambda'(F'(t)t^{-1})=\lambda(F(t)
\dot{w}^{-1}t^{-1}\dot{w})\qquad \mbox{for all $t \in T_0$}.\tag{b}
\end{equation*} 
Thus, we have associated with $(w,\theta)$ a pair $(\lambda,n)$ where 
$\lambda\in X$ and $n \geq 1$ is an integer such that $p \nmid n$ and (b) 
holds for some $\lambda_1\in X$ such that $\theta=\psi \circ 
\lambda_1|_{T_0[w]}$. (Note that Lusztig actually works with line
bundles $L$ over $G/B_0$ instead of characters $\lambda\in X$, but one can
pass from one to the other by \cite[1.3.2]{L1}.)
\end{rema}

\begin{rem} \label{xtens} In the setting of \ref{abs51}, the integer $n$
is uniquely determined by $(w,\theta)$, but $\lambda\in X$ depends on the
choice of $\lambda_1\in X$ such that $\bar{\theta}$ is the restriction 
of~$\lambda_1$. Note that, once $\lambda_1$ is chosen, then $\lambda$ is 
uniquely determined since the map $T_0\rightarrow T_0$, $t\mapsto F(t)
\dot{w}^{-1}t^{-1}\dot{w}$, is surjective (Lang's Theorem). Now, let $\mu_1
\in X$ also be such that $\bar{\theta}$ is the restriction of~$\mu_1$. Then
$\mu_1-\lambda_1$ is trivial on $T_0[w]$ and so, by \ref{abs51}(a), we 
have $\mu_1-\lambda_1=\nu\circ F'-\nu$ for some $\nu\in X$. But then 
$\mu:=\lambda+n\underline{w}^{-1}(\nu)$ is the unique element of $X$ such 
that \ref{abs51}(b) holds with $\lambda_1$ replaced by~$\mu_1$. Thus, we 
can associate with $(w,\theta)$ the well-defined element 
\begin{center}
$(\frac{1}{n}+\Z)\otimes \lambda\in (\Q/\Z)_{p'} \otimes X$,
\end{center}
where $(\Q/\Z)_{p'}$ is the group of elements of $\Q/\Z$ of order prime 
to~$p$ (see \cite[\S 4.1]{C2}, \cite[\S 5]{DeLu}, \cite[Chap.~13]{DiMi2},
\cite[8.4]{L1} for a further discussion of this correspondence). 
\end{rem}

%. Also note that the actions of $W$ and of $F$ on $X$ induce actions 
%on $(\Q/\Z)_{p'} \otimes X$. Then \ref{abs51}(b) implies that the $W$-orbit 
%of $\frac{1}{n}\otimes \lambda$ in $(\Q/\Z)_{p'} \otimes X$ is 
%$F$-stable 

\begin{rema} \label{abs52} Conversely, let us begin with a pair $(\lambda,n)$
where $\lambda \in X$ and $n \geq 1$ is an integer such that $p\nmid n$. 
Following Lusztig \cite[2.1]{L1}, we define $Z_{\lambda,n}$ to be the set
of all $w\in W$ for which there exists some $\lambda_w\in X(T_0)$ such 
that \ref{abs51}(b) holds, that is, 
\[ \lambda(F(t))=\lambda(\dot{w}^{-1}t\dot{w})
\lambda_w(t^n)\qquad \mbox{for all $t\in T_0$}.\]
Note that $\lambda_w$, if it exists, is uniquely determined by $w$ (since
$T_0=\{t^n\mid t\in T_0\}$). Assume now that $Z_{\lambda,n}\neq 
\varnothing$. Then, for any $w\in Z_{\lambda,n}$, the restriction of 
$\lambda_w$ to $T_0[w]$ is a group homomorphism
\[\bar{\lambda}_w\colon T_0[w]\rightarrow k^\times\qquad\mbox{such that}
\qquad \bar{\lambda}_w^n=1.\]
Using our embedding $\psi\colon k^\times \hookrightarrow \C^\times$, we 
obtain a character $\theta_w:=\psi\circ \bar{\lambda}_w\in \Irr(T_0[w])$, 
such that $\theta_w^n=1$. So each $w \in Z_{\lambda,n}$ gives rise to a 
virtual character $R_w^{\theta_w}$.  
\end{rema}

\begin{defn} \label{def52} Assume that $Z_{\lambda,n}\neq \varnothing$.
Following Lusztig \cite[2.19, 6.5]{L1}, we set 
\[ \cE_{\lambda,n}:=\{\rho \in \Irr(G^F)\mid \langle R_w^{\theta_w},\rho
\rangle \neq 0 \mbox{ for some $w \in Z_{\lambda,n}$}\}.\]
By \ref{abs51}, any $\rho \in \Irr(G^F)$ belongs to $\cE_{\lambda,n}$ for
some $(\lambda,n)$ as above. (Note that Lusztig assumes that $Z(G)$ is
connected, but the definition can be given in general.) 
\end{defn}

\begin{exmp} \label{expsl2a} (a) Let $n=1$ and $\lambda\colon T_0
\rightarrow k^\times$, $t\mapsto 1$, be the neutral element of~$X$.
Let $w\in W$ and set $\lambda_w:=\lambda$. Then the condition in
\ref{abs52} trivially holds. So, in this case, we have $Z_{\lambda,1}=W$
and $R_w^{\theta_w}=R_w^1$ for all $w\in W$. Consequently, $\cE_{\lambda,1}$
is precisely the set of unipotent characters of $G^F$.

(b) Let $G^F=\SL_2(\F_q)$ where $q$ is odd. As
in Example~\ref{expsl2}, we write $T_0=\{S(\xi)\mid \xi\in k^\times\}$ 
and $W=\{1,s\}$. Let $n=2$ and $\lambda\in X$ be defined by
$\lambda(S(\xi))=\xi$ for all $\xi\in k^\times$. Now note that, if $t \in 
T_0$, then $t^{q-1}=F(t)t^{-1}$ and $t^{q+1}=F(t)\dot{s}^{-1}t^{-1}
\dot{s}$. Thus, if we define $\lambda_1,\lambda_s\in X$ by 
\[ \lambda_1(S(\xi))=\xi^{(q-1)/2} \quad \mbox{and}\quad 
\lambda_s(S(\xi))=\xi^{(q+1)/2} \qquad\mbox{for all $\xi\in k^\times$},\]
then the condition in \ref{abs52} holds. So $Z_{\lambda,2}=W$. The 
corresponding characters of $T_0[1]$ and $T_0[s]$ are the unique
non-trivial characters of order~$2$. So we obtain 
\[ \cE_{\lambda,2}=\{\rho_0',\rho_0'',\pi_0',\pi_0''\}.\]
Thus, while $\{\rho_0',\rho_0''\}$ and $\{\pi_0',\pi_0''\}$ form 
different connected components of the graph $\cG(G^F)$, the above
constructions reveal a hidden relation among these four characters. (This 
hidden relation is precisely the relation of ``geometric conjugacy''; see
the references in Remark~\ref{xtens}.)
\end{exmp}

\begin{rema} \label{abs53} Now the situation simplifies when the centre
$Z(G)$ is connected. Assume that this is the case. Let $\lambda,n$ be as 
in \ref{abs52} such that $Z_{\lambda,n}\neq \varnothing$. Then, by 
\cite[1.8, 2.19]{L1}, there exists a unique element $w_1 \in W$ 
of minimal length in $Z_{\lambda,n}$ and we have 
\[ Z_{\lambda,n}=w_1W_{\lambda,n}\]
where $W_{\lambda,n}$ is a reflection subgroup with a 
canonically defined set of simple reflections $S_{\lambda,n}$.
Furthermore, $F$ induces an automorphism $\gamma \colon W_{\lambda,n}
\rightarrow W_{\lambda,n}$ such that $\gamma(S_{\lambda,n})=S_{\lambda,n}$
and $\gamma(y)=\sigma(w_1yw_1^{-1})$ for $y \in W_{\lambda,n}$; see 
\cite[2.15]{L1}. By \cite[3.4.1]{L1}, $\gamma$ induces an automorphism 
of the underlying root system. So we can apply the procedure in 
Section~\ref{secuni} to $(W_{\lambda,n},\gamma)$, and obtain
a corresponding set $\fX(W_{\lambda,n},\gamma)$. 
\end{rema}

\begin{thm}[Lusztig \protect{\cite[Main Theorem 4.23]{L1}}] \label{lu423} 
Assume that $Z(G)$ is connected. Let $\rho_0\in \cS(G^F)$ and $(\lambda,n)$ 
be a pair as above such that $\rho_0\in \cE_{\lambda,n}$ (cf.\ 
Definition~\ref{def52}). Then we have $\cE(\rho_0)=\cE_{\lambda,n}$ and 
there exists a bijection
\[ \cE(\rho_0)\stackrel{\sim}{\longrightarrow} \fX(W_{\lambda,n},
\gamma), \qquad \rho \mapsto \hat{x}_\rho,\]
such that $\langle R_{w_1y}^{\theta_{w_1y}},\rho\rangle=(-1)^{l(w_1)}
m_{\hat{x}_\rho,y}$ for $\rho\in\cE_{\lambda,n}$ and $y\in W_{\lambda,n}$,
where $Z_{\lambda,n}=w_1W_{\lambda,n}$ (see \ref{abs53}) and $m_{\hat{x},
y}$ are the numbers in \ref{abs45}(b) (with respect to 
$(W_{\lambda,n},\gamma)$).
\end{thm}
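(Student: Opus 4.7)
The plan is to establish the equality $\cE(\rho_0)=\cE_{\lambda,n}$ first, and then to build the bijection with $\fX(W_{\lambda,n},\gamma)$ by reducing, via duality, to the unipotent case already settled by Corollary~\ref{cor1}. For the first step I would combine Theorem~\ref{agree1}(c) with Remark~\ref{xtens}: the latter attaches to any pair $(w,\theta)$ a well-defined $W$-orbit on $(\Q/\Z)_{p'}\otimes X$, and this orbit is precisely the invariant parametrizing geometric conjugacy classes of characters (see \cite[\S 12.1]{C2}, \cite[\S 10]{DeLu}). By its definition, $\cE_{\lambda,n}$ collects exactly those $\rho$ whose associated orbit is $W\cdot((1/n{+}\Z)\otimes\lambda)$, so it coincides with the geometric conjugacy class of $\rho_0$, which by Theorem~\ref{agree1} is $\cE(\rho_0)$.

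For the bijection itself I would invoke duality: let $G^\ast$ be a connected reductive group dual to $G$ and let $F^\ast$ be the dual Frobenius. Connectedness of $Z(G)$ forces the derived subgroup of $G^\ast$ to be simply connected, and hence $H:=C_{G^\ast}(s)$ is connected reductive for every semisimple $s\in (G^\ast)^{F^\ast}$; this is where the hypothesis on $Z(G)$ enters decisively. Modulo the ambiguity identified in Remark~\ref{xtens}, the pair $(\lambda,n)$ specifies a $(G^\ast)^{F^\ast}$-conjugacy class of such semisimple elements, and I would fix a representative $s$. A direct calculation then identifies $W_{\lambda,n}$ with the Weyl group of $H$ with respect to a maximal torus dual to $T_0$, and the automorphism $\gamma$ of~\ref{abs53} with the $F^\ast$-action on it. Applying Corollary~\ref{cor1} to $H$ yields a bijection
\[
\fU(H^{F^\ast}) \stackrel{\sim}{\longrightarrow} \fX(W_{\lambda,n},\gamma),
\]
and the task becomes to produce a compatible bijection $\cE_{\lambda,n} \stackrel{\sim}{\longrightarrow} \fU(H^{F^\ast})$.

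The main obstacle is the construction of this bijection $\rho\mapsto\phi_\rho$ with the required compatibility: the virtual character $R_{w_1 y}^{\theta_{w_1 y}}$ of $G^F$ (for $y\in W_{\lambda,n}$) must match, up to the sign $(-1)^{l(w_1)}$, the unipotent Deligne--Lusztig character of $H^{F^\ast}$ associated to the torus of type $y$ with trivial~$\theta$. The sign accounts for the parity of $l(w_1 y)-l(y)$, i.e.\ for the discrepancy between the two dimension counts. Granted this matching, the asserted map $\rho\mapsto \hat{x}_\rho$ is the composition of the two bijections, and the multiplicity identity is
\[
\langle R_{w_1 y}^{\theta_{w_1 y}},\rho\rangle_{G^F} \;=\; (-1)^{l(w_1)}\,\langle R_y^{1,H},\phi_\rho\rangle_{H^{F^\ast}} \;=\; (-1)^{l(w_1)}\, m_{\hat{x}_\rho,y},
\]
where the last equality is~\ref{abs45}(b) applied to $H$ and $\phi_\rho$.

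The existence of this compatible bijection is the hard part of \cite{L1}: after the first reduction to the case where $G$ is simple of adjoint type (cf.\ \ref{abs41}), it is proved by a delicate case-by-case analysis in each type, orchestrated around the non-abelian Fourier matrices of~\ref{abs45}. Uniqueness of $\hat{x}_\rho$, once existence is known, is forced by the system of multiplicity equations: invertibility of the Fourier matrix lets one solve for $\hat{x}_\rho$ explicitly from the numbers $\langle R_{w_1 y}^{\theta_{w_1 y}},\rho\rangle$ as $y$ varies over~$W_{\lambda,n}$, so the bijection is uniquely pinned down by the stated compatibility.
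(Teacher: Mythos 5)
The paper itself does not prove this theorem: it is Lusztig's Main Theorem~4.23 from \cite{L1}, stated here as a citation, and no survey could reasonably reproduce the argument, which occupies a substantial portion of that book. Your sketch is an accurate high-level map of the architecture of Lusztig's proof: interpret $(\lambda,n)$ as a semisimple $s\in G^\ast$, use connectedness of $Z(G)$ to guarantee $H:=C_{G^\ast}(s)$ is connected, identify $(W_{\lambda,n},\gamma)$ with the Weyl-group datum of $H$, import the unipotent parametrization, and then construct a bijection $\cE_{\lambda,n}\leftrightarrow\fU(H^{F^\ast})$ compatible with the stated multiplicity identity. You also correctly locate the heart of the matter~--~the existence of that compatible bijection~--~as the deep case-by-case analysis in \cite{L1}, and you are right that invertibility of the Fourier matrices forces uniqueness once existence is known. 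Two small corrections to the outline: (i) for a general centralizer $H$ the quotient $H/Z(H)$ need not be simple, so the relevant reference is \ref{abs45}(a)--(b), not Corollary~\ref{cor1}, which only treats the simple case; (ii) your argument for $\cE(\rho_0)=\cE_{\lambda,n}$ quietly assumes that every pair $(w',\theta')$ geometrically conjugate to the data of $(\lambda,n)$ can be normalized (via Proposition~\ref{scform}) to a pair $(w,\theta_w)$ with $w\in Z_{\lambda,n}$; this is true, and is essentially what Remark~\ref{allrt} says, but that remark is stated in the paper as a consequence of Theorem~\ref{lu423}, so one must supply a direct argument from the construction in Remarks~\ref{abs51}--\ref{xtens} to avoid circularity.
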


\begin{rem} \label{remdual} As explained in \cite[8.4]{L1}, a pair 
$(\lambda,n)$ as above can be interpreted as a semisimple element $s \in
G^*$ where $G^*$ is a group ``dual'' to $G$. There is a corresponding
Frobenius map $F^*\colon G^* \rightarrow G^*$ and then the conjugacy
class of~$s$ is $F^*$-stable. This actually gives rise to a bijection
(see also \cite[\S 10]{DeLu}):
\[\cS(G^F) \stackrel{\sim}{\longrightarrow} \{\mbox{$F^*$-stable 
semisimple conjugacy classes of $G^*$}\}.\]
Furthermore, the set $\fX(W_{\lambda,n},\gamma)$ parametrizes the unipotent 
characters of $C_{G^*}(s)^{F^*}$; note that $C_{G^*}(s)$ is connected 
since $Z(G)$ is connected (by a result of Steinberg; see \cite[4.5.9]{C2}).
For further details about ``dual'' groups, see \cite[Chap.~4]{C2}.
\end{rem}

\begin{rem} \label{sschar} Let $\cE(\rho_0)\stackrel{\sim}{\longrightarrow} 
\fX(W_{\lambda,n},\gamma)$ as in Theorem~\ref{lu423}. Then the semisimple
character $\rho_0 \in \cE(\rho_0)$ corresponds to the unique $\hat{x}_0\in
\fX(W_{\lambda,n},\gamma)$ such that $m_{\hat{x}_0,y}=1$ for all $y\in 
W_{\lambda,n}$ (see \ref{abs45}(c)). Indeed, by \cite[8.4.6]{C2}, 
\cite[10.7]{DeLu}, $\rho_0$ is uniform and it has an explicit expression 
as a linear combination of virtual characters $R_{T,\theta}$. As in 
\cite[14.47]{DiMi2}, this can be rewritten in the form 
\[\rho_0=(-1)^{l(w_1)}|W_{\lambda,n}|^{-1}\sum_{w \in Z_{\lambda,
n}} R_{w}^{\theta_{w}}.\]
Now let $\rho\in\cE(\rho_0)$ correspond to $\hat{x}_0$. By
Theorem~\ref{lu423}, we have $\langle R_{w}^{\theta_{w}},\rho
\rangle=(-1)^{l(w_1)}$ for all $w\in Z_{\lambda,n}$. As in \ref{abs45}(c), 
we find that $\langle \rho_0, \rho\rangle=1$ and, hence, $\rho=\rho_0$.
\end{rem}

\begin{rem} \label{allrt} For any $w'\in W$ and any $\theta'\in
\Irr(T_0[w'])$, the virtual character $R_{w'}^{\theta'}$ is equal to
one of the virtual characters in the set 
\[\{ R_w^{\theta_w}\mid w\in Z_{\lambda,n}
\mbox{ for some $(\lambda,n)$ as above}\}.\]
Indeed, let $\rho_0\in \cS(G^F)$ be such that $\langle R_{w'}^{\theta'},
\rho_0\rangle\neq 0$. Then choose $(\lambda,n)$ such that $Z_{\lambda,n}
\neq \varnothing$ and $\rho_0\in \cE_{\lambda,n}$. Using 
Remark~\ref{sschar}, we have $\langle R_{w'}^{\theta'}, R_w^{\theta_w}
\rangle\neq 0$ for some $w\in Z_{\lambda,n}$. So, finally, 
$R_{w'}^{\theta'}=R_w^{\theta_w}$ by Proposition~\ref{scform}.
\end{rem}

\begin{rema} \label{plan1} The above results lead to a general plan for 
classifying the irreducible characters of $G^F$, assuming that $Z(G)$ is 
connected. In a first step, one considers the dual group $G^*$ and 
determines the $F$-stable semisimple conjugacy classes of $G^*$. As in 
Remark~\ref{remdual}, this gives a parametrization of the set $\cS(G^F)$. 
Since each $\rho_0\in\cS(G^F)$ is uniform, one can even compute~---~at 
least in principle~---~the character values of $\rho_0$ by using the 
character formula for $R_w^\theta$ (mentioned in Remark~\ref{green}) and 
known results on Green functions \cite{S1}, \cite{S5a}. For a given 
$\rho_0 \in \cS(G^F)$, one then determines a corresponding pair 
$(W_{\lambda,n},\gamma)$ as above. The characters in $\cE(\rho_0)$ are 
parametrized by $\fX(W_{\lambda,n},\gamma)$ and we know the multiplicities 
of these characters in $R_w^\theta$ for all $w,\theta$; hence, we can 
also work out the corresponding degree polynomials. A large portion of 
this whole procedure (and also the one in \ref{plan2} below) can even be
put on a computer; see \cite{chev} and L\"ubeck \cite{lue} where explicit 
data are made available for various series of groups.
\end{rema}

%\begin{proof}  By \cite[14.47]{DiMi2}, the class function 
%$|W_{\lambda,n}|^{-1}\sum_{w \in W_{\lambda,n}} (-1)^{l(w)}
%R_w^{\hat{\lambda}_w}$ is $\pm $ an irreducible character of $G^F$. 
%Applying the duality operator $D_G$ in \cite[8.8]{DiMi2} and using 
%\cite[12.8]{DiMi2}, we also see that $\rho_{\lambda,n}$ is $\pm$ an 
%irreducible character. Now 
%\[ \rho_{\lambda,n}(1)=(-1)^{l(w_1)}|W_{\lambda,n}|^{-1}\sum_{w \in 
%W_{\lambda,n}} (-1)^{l(w)}q^{-N}[G^F:T_0[w]]\]
%\end{proof}

%%%%%%%%%%%%%%%%%%%%%%%%%%%%%%%%%%%%%%%%%%%%%%%%%%%%%%%%%%%%%%%%%%%%%%%
\section{Regular embeddings} \label{secj1}

Let us drop the assumption that $Z(G)$ is connected. Then we can find an 
embedding $G \subseteq \tG$, where $\tG$ is a connected reductive 
group over~$k$ such that 
\begin{itemize}
\item $G$ is a closed subgroup of $\tG$ and $G,\tG$ have the same derived 
subgroup; 
\item $Z(\tG)$ is connected;
\item there is a Frobenius map $\tF\colon \tG \rightarrow \tG$ whose 
restriction to $G$ equals~$F$.
\end{itemize}
Such an embedding is called a ``regular embedding''. (See 
\cite[5.18]{DeLu}, \cite{Lu5}.) This is the key tool to transfer results 
from the connected centre case to the general case. 

\begin{exmp} \label{regexp} (a) Assume that $Z(G)$ is connected and let 
$\Gder\subseteq G$ be the derived subgroup of $G$. Then, clearly,
$\Gder \subseteq G$ is a regular embedding. The standard example is given 
by $G=\GL_n(k)$ where $\Gder=\SL_n(k)$.

(b) Let $G=\SL_n(k)$ and suppose we did not know yet of the existence of 
$\GL_n(k)$. Then we can \textit{construct} a regular embedding $G\subseteq 
\tG$ as follows. Let
\[\tG:=\{(A,\xi)\in M_n(k) \times k^\times\mid \xi\det(A)=1\};\]
then $Z(\tG)=\{(\xi I_n,\xi^{-n})\mid \xi\in k^\times\}$ is connected and
$\dim Z(\tG)=1$. Identifying $A \in G$ with $(A,1)\in \tG$, we obtain
a regular embedding. A corresponding Frobenius map $\tF \colon \tG
\rightarrow\tG$ is defined by $\tF(A,\xi)=(F(A),\xi^q)$ if $G^F=\SL_n(q)$, 
and by $\tF(A,\xi)=(F(A),\xi^{-q})$ if $G^F=\mbox{SU}_n(q)$. This is just 
an example of the general construction mentioned above. For further
examples see \cite[\S 1.7]{gema}.
\end{exmp}

We now fix a regular embedding $G \subseteq \tG$. 

%By \cite[1.4]{Le78}, we have $\tG^{\tF}=G^F.\tT_0^{\tF}$ and 
%$T_0^F=G^F\cap \tT_0^{\tF}$. Furthermore, 
\begin{rema} \label{rege1} Here are some purely group-theoretical properties;
see Lehrer \cite[\S 1]{Le78}. First of all, one easily sees that $G^F$ is a 
normal subgroup of $\tG^{\tF}$ such that $\tG^{\tF}/G^F$ is abelian. Let 
$Z=Z(G)$ and $\tZ=Z(\tG)$. If $T\subseteq G$ is an $F$-stable maximal torus, 
then $\tT:=\tZ.T\subseteq \tG$ is an $\tF$-stable maximal torus; a similar 
statement holds for Borel subgroups. In particular, we obtain an $F$-stable 
split $BN$-pair $(\tT_0,\tB_0)$ in $\tG$ and the inclusion $N_G(T_0) 
\subseteq N_{\tG}(\tT_0)$ induces a canonical isomorphism between the Weyl 
group $W=N_G(T_0)/T_0$ of $G$ and the Weyl group $N_{\tG}(\tT_0)/\tT_0$ 
of~$\tG$. We have $\tG=G.\tZ$ but $G^F.\tZ^\tF \subsetneqq \tG^{\tF}$, in 
general. The gap is measured by the group $(Z/Z^\circ)_F$ (the largest
quotient of $Z/Z^\circ$ on which $F$ acts trivially). By \cite[1.2]{Le78},
\cite[1.7.6]{gema}, we have an exact sequence
\[ \{1\} \longrightarrow G^F.\tZ^{\tF}\longrightarrow \tG^{\tF}
\longrightarrow (Z/Z^\circ)_F.\]
Note that every irreducible character of $G^F.\tZ^F$ restricts irreducibly 
to $G^F$, so if some irreducible character of $\tG^{\tF}$ becomes
reducible upon restriction to $G^F$, then the splitting must happen
between $\tG^{\tF}$ and $G^F.\tZ^{\tF}$. 
%The crucial fact about regular embeddings is: 
\end{rema}

\begin{thm}[Lusztig] \label{multfree} Let $\trho\in \Irr(\tG^{\tF})$. 
Then the restriction of $\trho$ to $G^F$ is multiplicity-free, that is, we
have $\trho|_{G^F}=\rho_1+\ldots +\rho_r$ where $\rho_1,\ldots,\rho_r$ are
distinct irreducible characters of $G^F$.
\end{thm}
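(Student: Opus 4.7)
The plan is to apply Clifford theory to the normal inclusion $G^F\triangleleft\tG^{\tF}$ (whose quotient is abelian by Remark~\ref{rege1}) and reduce the statement to a Schur-multiplier vanishing. By Clifford's theorem, $\trho|_{G^F}=e(\rho_1+\cdots+\rho_r)$, where $\{\rho_1,\dots,\rho_r\}$ is a single $\tG^{\tF}$-orbit in $\Irr(G^F)$ and $e\geq 1$ is a common multiplicity; the goal is $e=1$.

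First I would extend each $\rho_i$ to the intermediate subgroup $G^F\cdot\tZ^{\tF}$ using the central character. Since $\tZ^{\tF}\subseteq Z(\tG^{\tF})$ acts in the representation affording $\trho$ via a linear character $\omega:=\omega_{\trho}$, and $\omega|_{Z^F}$ agrees with the central character of $\rho_i$ on $Z^F=G^F\cap\tZ^{\tF}$, the formula $\tilde\rho_i(gz):=\rho_i(g)\,\omega(z)$ yields a well-defined irreducible extension $\tilde\rho_i\in\Irr(G^F\cdot\tZ^{\tF})$; the $\tilde\rho_i$ are pairwise distinct (their restrictions to $G^F$ are), and one sees directly that $\trho|_{G^F\cdot\tZ^{\tF}}=e(\tilde\rho_1+\cdots+\tilde\rho_r)$. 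Thus $e=1$ is equivalent to multiplicity-freeness of $\trho|_{G^F\cdot\tZ^{\tF}}$, i.e., to showing that $\tilde\rho_1$ extends from $G^F\cdot\tZ^{\tF}$ to its inertia subgroup $I$ in $\tG^{\tF}$. By Remark~\ref{rege1}, $I/(G^F\cdot\tZ^{\tF})$ embeds into the finite abelian group $(Z/Z^\circ)_F$.

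When that quotient is cyclic --- automatic for every simple type except $D_n$ with $n$ even --- the relevant Schur multiplier vanishes and the extension of $\tilde\rho_1$ exists for free, completing the argument. The main obstacle is the general, non-cyclic case, where the cohomological obstruction in $H^2(I/(G^F\cdot\tZ^{\tF}),\C^\times)$ must be shown to vanish. To handle this I would invoke the classification of $\Irr(\tG^{\tF})$ provided by Theorem~\ref{lu423} (applicable because $Z(\tG)$ is connected). The group $\widehat{\tG^{\tF}/(G^F\cdot\tZ^{\tF})}$ acts on $\Irr(\tG^{\tF})$ by tensoring, and via the duality of Remark~\ref{remdual} this action corresponds to translation of the semisimple Jordan parameters by the kernel of the dual surjection $\tG^*\twoheadrightarrow G^*$. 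Using Theorem~\ref{lu423} to control exactly when two characters of $\tG^{\tF}$ in a fixed Lusztig series are isomorphic, one exhibits a concrete extension of $\tilde\rho_1$ to $I$; Gallagher's theorem then delivers the multiplicity-freeness. This explicit analysis is the technical core of the proof carried out in~\cite{Lu5, Lu08a}.
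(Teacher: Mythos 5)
The paper itself gives no proof of this result; it states the theorem and refers to Lusztig \cite[Prop.~10]{Lu5}, with the ``surprisingly complicated'' details in \cite{Lu08a}, and to Cabanes--Enguehard \cite[Chap.~16]{CE}. Your Clifford-theoretic reduction matches the framework behind those references and is executed correctly as far as it goes: the intermediate extension $\tilde\rho_i(gz)=\rho_i(g)\omega(z)$ is well-defined because the central characters agree on $Z^F=G^F\cap\tZ^{\tF}$ (as follows from $\rho_i$ being a constituent of $\trho|_{G^F}$); the multiplicity $e$ equals $1$ if and only if $\tilde\rho_1$ extends from $G^F\cdot\tZ^{\tF}$ to its inertia group $I$ in $\tG^{\tF}$; the quotient $I/(G^F\cdot\tZ^{\tF})$ embeds in $(Z/Z^\circ)_F$ by the exact sequence of Remark~\ref{rege1}; and the vanishing of $H^2(C,\C^\times)$ for $C$ cyclic disposes of every simple type except $D_n$ with $n$ even.

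The gap is that the non-cyclic case is not actually handled. Asserting that Theorem~\ref{lu423} together with the dual-group action ``exhibits a concrete extension of $\tilde\rho_1$ to $I$'' is a statement of intent, not an argument: the entire content of \cite{Lu08a} (finite spin groups) and of \cite[Chap.~16]{CE} consists in carrying out precisely this analysis, and it is long and delicate. In particular, Theorem~\ref{lu423} parametrizes $\Irr(\tG^{\tF})$ but does not by itself tell you whether a given $\tta\in\Theta(\trho_0)$ fixes a given $\trho$ --- Example~\ref{disexp} in this very paper illustrates how hard that determination can be even in a single instance in type $E_7$, and the $D_n$ analysis must settle it systematically. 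So what you have is a correct and economical reduction of Theorem~\ref{multfree} to the cocycle-vanishing question, plus a pointer to the literature for the hard case; as a \emph{proof} it stops exactly where the real work begins. (To be fair, the paper stops at the same point, giving only the citations.)
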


This statement appeared in \cite[Prop.~10]{Lu5} (see also \cite{LuW}), 
with an outline of the strategy of the proof; the details of this proof, 
which are surprisingly complicated, were provided much later in
\cite{Lu08a}. In the meantime, Cabanes and Enguehard also gave a proof 
in \cite[Chap.~16]{CE}.

Let us now consider the virtual characters $R_w^\theta$ of $G^F$. As 
discussed in \ref{rege1}, we can identify $W$ with the Weyl group 
$N_{\tG}(\tT_0)/\tT_0$ of $\tG$. So it makes sense to define 
\[ \tT_0[w]:=\{t \in \tT_0\mid \tF(t)=\dot{w}^{-1}t\dot{w}\}\subseteq \tT_0
\qquad \mbox{for $w \in W$},\]
where $\dot{w}\in N_G(T_0)$ also is a representative of $w$ in 
the Weyl group of $\tG$. We have $T_0[w]\subseteq \tT_0[w]$ and so we can 
restrict characters from $\tT_0[w]$ to $T_0[w]$.

\begin{lem} \label{rtres} Let $w\in W$ and $\ttheta\in \Irr(\tT_0[w])$.
Then $R_w^{\ttheta}|_{G^F}=R_w^\theta$, where $\theta\in \Irr(T_0[w])$ is 
the restriction of $\ttheta$ to $T_0[w]$.
\end{lem}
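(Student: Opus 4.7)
My approach is to compare the $\ell$-adic cohomology constructions of $R_w^{\ttheta}$ and $R_w^\theta$ using the geometry of the regular embedding $G\subseteq\tG$.

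First, I would record the structural setup. Since $G$ and $\tG$ share the derived subgroup, $U_0\subseteq \Gder\subseteq G\cap\tG$, so $U_0$ is the unipotent radical of both $B_0$ and $\tB_0:=\tZ\cdot B_0$, and $\tT_0:=\tZ\cdot T_0$ is an $\tF$-stable maximal torus of $\tG$. The Weyl groups $W=N_G(T_0)/T_0$ and $N_{\tG}(\tT_0)/\tT_0$ are canonically identified, and any representative $\dot{w}\in N_G(T_0)$ of $w$ serves simultaneously in both. Hence the defining condition of $\tilde{Y}_{\dot{w}}$ uses literally the same coset $\dot{w}U_0$ as that of $Y_{\dot{w}}$, so $Y_{\dot{w}}=\tilde{Y}_{\dot{w}}\cap G$ is a closed $G^F$-stable subvariety and $T_0[w]\subseteq \tT_0[w]$, so that $\theta=\ttheta|_{T_0[w]}$ makes sense.

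The geometric heart of the argument is a decomposition of $\tilde{Y}_{\dot{w}}$ relative to $Y_{\dot{w}}$. Writing any $\tilde{x}\in\tilde{Y}_{\dot{w}}$ as $\tilde{x}=g\tilde{z}$ with $g\in G$, $\tilde{z}\in\tZ$ (using $\tG=G\tZ$), the relation $\tilde{x}^{-1}\tF(\tilde{x})=g^{-1}F(g)\cdot\tilde{z}^{-1}\tF(\tilde{z})\in\dot{w}U_0\subseteq G$, combined with $\tZ\cap U_0=\{1\}$, forces $\tilde{z}^{-1}\tF(\tilde{z})\in Z$. This produces a $\tG^{\tF}$-equivariant map $\tilde{Y}_{\dot{w}}\to Z/L_{\tF}(Z)=H^1(\tF,Z)$; over each class, $\tilde{Y}_{\dot{w}}$ is a translate of a free $Z^F$-quotient of $Y_{\dot{w}}\times\tZ^{\tF}$ (for the trivial class this is literally $(Y_{\dot{w}}\times\tZ^{\tF})/Z^F$ via $(x,z)\mapsto xz$, with $Z^F$ acting diagonally as $z_0\cdot(x,z)=(xz_0,z_0^{-1}z)$; other classes give twisted analogues). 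Passing to $\ell$-adic cohomology with compact support yields a corresponding decomposition of $H_c^*(\tilde{Y}_{\dot{w}})$ into pieces labelled by $H^1(\tF,Z)$, compatible with the commuting actions of $G^F$, $T_0[w]$ and $\tZ^{\tF}\subseteq\tT_0[w]$.

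Finally, one inserts the Lefschetz trace $\Lambda(g,t):=\sum_i(-1)^i\operatorname{Tr}((g,t),H_c^i(\tilde{Y}_{\dot{w}}))$ for $g\in G^F$, $t\in\tT_0[w]$, into the averaging formula for $R_w^{\ttheta}(g)$ and simplifies. Averaging $\ttheta$ over the central subgroup $\tZ^{\tF}\subseteq\tT_0[w]$ picks out a single piece in the $H^1(\tF,Z)$-decomposition (via the translation action of $\tZ^{\tF}$ on itself, whose trace on $\overline{\Q}_\ell[\tZ^{\tF}]$ is $|\tZ^{\tF}|$ times the Kronecker $\delta$ at the identity); on that piece the K\"unneth formula collapses the sum over $\tT_0[w]$ to a sum over $T_0[w]$ weighted by $\theta$, and a count of cardinalities reconciles the normalisations to deliver $R_w^\theta(g)$. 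The main obstacle is precisely this bookkeeping: in general $\tT_0[w]\supsetneq T_0[w]\cdot\tZ^{\tF}$ and $\tilde{Y}_{\dot{w}}$ has several twisted components, so the contributions of the ``exotic'' cosets and twisted pieces must be carefully tracked; this is the same phenomenon that forces the extension of Lusztig's Jordan decomposition from the connected-centre case to the general one (cf.\ Section~\ref{secj1}).
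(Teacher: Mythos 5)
Your proposal does not follow the paper's route at all: the paper simply rewrites $R_w^{\ttheta}$ and $R_w^\theta$ in the $R_{T,\theta}$-model via Remark~\ref{remrt} and cites the known compatibility of Deligne--Lusztig induction with regular embeddings, namely \cite[13.22]{DiMi2} (see also \cite[Lemma~1.4]{bs2}). Proving that compatibility directly from the varieties, as you attempt, is a legitimate alternative and is essentially what underlies the cited result; so the \emph{strategy} is sound. However, as written there is a genuine gap, which you yourself flag in the last sentence and do not close.

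The problematic step is the decomposition of $\tilde{Y}_{\dot{w}}$ by the class of $\tilde{z}^{-1}\tF(\tilde{z})$ in $H^1(\tF,Z)$. This decomposition is \emph{not} stable under right multiplication by $\tT_0[w]$: writing $\tilde{t}=t_0\tilde{z}_0$ with $t_0\in T_0$, $\tilde{z}_0\in\tZ$, right multiplication by $\tilde{t}$ translates the $H^1(\tF,Z)$-label by the class of $\tilde{z}_0^{-1}\tF(\tilde{z}_0)=F(t_0)^{-1}\dot{w}^{-1}t_0\dot{w}\in Z$, which is in general nontrivial. So when you average $\Lambda(g,\tilde{t})\ttheta(\tilde{t})$ over $\tilde{t}\in\tT_0[w]$, the elements outside $T_0[w]\cdot\tZ^{\tF}$ permute your pieces nontrivially, and the claim that ``averaging over $\tZ^{\tF}$ picks out a single piece'' plus ``a count of cardinalities reconciles the normalisations'' does not by itself control these contributions. (Your own $\SL_2\subseteq\GL_2$ example gives $|\tT_0[1]|=(q-1)^2$ while $|T_0[1]\cdot\tZ^{\tF}|=(q-1)^2/2$, so the extra cosets are really there.)

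The fact you are missing, which removes all this bookkeeping, is that the multiplication map
\[
Y_{\dot{w}}\times\tT_0[w]\longrightarrow\tilde{Y}_{\dot{w}},\qquad (y,t)\mapsto yt,
\]
is already \emph{surjective}, with fibres exactly the orbits of the diagonal $T_0[w]$-action $a\cdot(y,t)=(ya,a^{-1}t)$. Surjectivity follows from Lang's theorem applied to the \emph{connected} torus $T_0$: given $\tilde{x}=g\tilde{z}$ with $g\in G$, $\tilde{z}\in\tZ$, one solves $F(t_0)\,\dot{w}^{-1}t_0^{-1}\dot{w}=\tilde{z}^{-1}\tF(\tilde{z})\in Z\subseteq T_0$ for $t_0\in T_0$, sets $y:=gt_0\in Y_{\dot{w}}$ and $t:=t_0^{-1}\tilde{z}\in\tT_0[w]$. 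This gives a $G^F\times\tT_0[w]$-equivariant isomorphism $\tilde{Y}_{\dot{w}}\cong(Y_{\dot{w}}\times\tT_0[w])/T_0[w]$, and then K\"unneth plus the usual averaging formula for $H_c^*$ of a free finite quotient shows that $\Lambda(g,\tilde{t})$ vanishes unless $\tilde{t}\in T_0[w]$, where it equals $\tfrac{|\tT_0[w]|}{|T_0[w]|}\sum_i(-1)^i\operatorname{Tr}((g,\tilde{t}),H_c^i(Y_{\dot{w}}))$; inserting this into the defining formula for $R_w^{\ttheta}(g)$ gives exactly $R_w^\theta(g)$ with no residual terms. If you replace your $H^1(\tF,Z)$-stratification by this single product decomposition, the ``exotic cosets'' disappear and your plan becomes a complete proof (and in fact a proof of the result the paper cites).
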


\begin{proof} As in Remark~\ref{remrt}, we write $R_w^\theta=R_{T_w,
\underline{\theta}}$ and, similarly, $R_w^{\ttheta}=R_{\tT_w,
\underline{\ttheta}}$. Under these identifications, $\underline{\theta}
\in \Irr(T_w^F)$ is the restriction of $\underline{\ttheta}\in
\Irr(\tT_w^\tF)$ to $T_w^F$. Then 
\[ R_w^{\ttheta}|_{G^F}=R_{\tT_w,\underline{\ttheta}}|_{G^F}=R_{T_w,
\underline{\theta}}=R_w^\theta\]
where the second equality holds by \cite[13.22]{DiMi2} (see also 
\cite[Lemma~1.4]{bs2}).
\end{proof}

Before we continue, we settle a point that was left open in \ref{abs32};
this will also be an illustration of how regular embeddings can be used to
transfer results on characters from the connected centre case to the
general case.
 
\begin{rem} \label{abs32a} Let $\trho\in \Irr(\tG^{\tF})$ and write
$\trho|_{G^F}=\rho_1+\ldots +\rho_r$ as in Theorem~\ref{multfree}.
\begin{itemize}
\item[(a)] The degree polynomials (see \ref{abs32}) are related by 
$\D_{\rho_i}=\frac{1}{r}\D_{\trho}$ (as it should~be).
\item[(b)] All $\rho_i$ have the same unipotent support, which is the
unipotent support of~$\trho$. In particular, $\trho\in\cS(\tG^\tF)$ if and 
only if $\rho_i\in\cS(G^F)$ for all~$i$.
\end{itemize}
\end{rem}

\begin{proof} (a) For $w\in W$ let $Q_w$ and $\tilde{Q}_w$ be the
corresponding Green functions (see Remark~\ref{green}) for $G^F$ and 
$\tG^{\tF}$, respectively. Then we can write $\D_{\rho_i}$ as
\[\D_{\rho_i}=\frac{1}{|W|}\sum_{w\in W} (-1)^{l(w)}
|T_0[w]| \langle Q_w,\rho_i\rangle \bq^{-N}\frac{|\G|}{|\T_w|}, \]
and we have a similar formula for $\D_{\trho}$. By Lemma~\ref{rtres}, $Q_w$ 
is the restriction of $\tilde{Q}_w$. Since the $\rho_i$ are conjugate
under $\tG^{\tF}$, this implies that $\langle Q_w,\rho_i \rangle$ does not
depend on~$i$ and so all $\D_{\rho_i}$ are equal. Thus, 
\[r\D_{\rho_i} =\D_{\rho_1}+\ldots +\D_{\rho_r}=\frac{1}{|W|}\sum_{w\in W}
(-1)^{l(w)} |T_0[w]| \langle Q_w,\trho|_{G^F}\rangle\,\bq^{-N}
\frac{|\G|}{|\T_w|},\]
for any fixed $i\in\{1,\ldots,r\}$. Next note that $G_\text{uni}=
\tG_{\text{uni}}$ and so 
\[ \Ind_{G^F}^{\tG^{\tF}}(\trho|_{G^F})(u)=|\tG^{\tF}:G^F|\trho(u)
\qquad \mbox{for all unipotent $u\in \tG^{\tF}$}.\]
Hence, using Frobenius reciprocity, we obtain 
$\langle Q_w,\trho|_{G^F}\rangle=|\tG^{\tF}:G^F|\langle \tilde{Q}_w,
\trho\rangle$. Finally, the formulae in Remark~\ref{abs31a} imply that
$|\tilde{\G}|/|\tilde{\G}_w|=|\G|/|\T_w|$ for any $w\in W$. This 
immediately yields that $r\D_{\rho_i}=\D_{\trho}$ for all~$i$.

(b) Let $\cO$ be an $F$-stable unipotent class of $G$; then $\cO$ 
also is an $\tF$-stable unipotent class in~$\tG$. Since the $\rho_i$ are 
$G^F$-conjugate, we conclude that $\AV(\rho_i,\cO)$ does not depend 
on~$i$. As in the proof of \cite[3.7]{GM1}, we then have 
\[r|A(u)|\AV(\rho_i,\cO)=|\tilde{A}(u)|\AV(\trho,\cO)\qquad\mbox{for 
any fixed~$i$},\]
where $A(u)=C_G(u)/C_G^\circ(u)$ and $\tilde{A}(u)=
C_{\tG}(u)/C_{\tG}^\circ(u)$ for $u\in \cO$. This clearly yields the 
statement about the unipotent supports of the~$\rho_i$. 
\end{proof}

\begin{rema} \label{multfree1} We now discuss some purely Clifford-theoretic 
aspects (cf.\ \cite[\S 2]{Le78}). Let $\Theta$ denote the group of all linear 
characters $\tta\colon \tG^{\tF}\rightarrow\C^\times$ with $G^F\subseteq 
\ker(\tta)$. Then $\Theta$ acts on $\Irr(\tG^{\tF})$ via $\trho\mapsto 
\tta\cdot \trho$ (usual pointwise multiplication of class functions).
If $\trho,\trho'\in\Irr(\tG^\tF)$, then either $\trho|_{G^F}$, 
$\trho'|_{G^F}$ do not have any irreducible constituent in common, or we 
have $\trho|_{G^F}= \trho'|_{G^F}$; the latter case happens precisely when 
$\trho'=\tta\cdot \trho$ for some $\tta\in\Theta$.
Given $\trho\in \Irr(\tG^{\tF})$, we denote the stabilizer of $\trho$ by
\[\Theta(\trho):=\{\tta\in \Theta\mid \tta\cdot \trho= \trho\}.\]
Now write $\trho|_{G^F}=\rho_1+\ldots +\rho_r$, as in 
Theorem~\ref{multfree}. Then 
\[ r=\langle \trho|_{G^F},\trho|_{G^F}\rangle=\big\langle \trho,
\mbox{Ind}_{G^F}^{\tG^{\tF}}(\trho|_{G^F})\bigr\rangle=\sum_{\tta\in
\Theta} \langle \trho,\tta\cdot \trho\rangle=|\Theta(\trho)|.\]
Furthermore, let again $Z=Z(G)$ and $\tZ=Z(\tG)$ as in \ref{rege1}. Then 
\[ \Theta(\trho)\subseteq \{\tta\in\Theta \mid \tZ^F \subseteq 
\ker(\tta)\},\]
since any element of $\tZ^F$ acts as a scalar in a representation 
affording $\trho$. Consequently, $r=|\Theta(\trho)|$ divides 
$|\tG^{\tF}:G^F.\tZ^F|$ and, hence, the order of $(Z/Z^\circ)_F$ (where
we use the exact sequence in \ref{rege1}). 
\end{rema}

\begin{rem} \label{csinv} The set $\cS(\tG^{\tF})$ is invariant under
the action of $\Theta$. This is clear by the definition in 
Example~\ref{reguni}; just note that $\tG_{\text{uni}}=G_{\text{uni}}$ and
$\tta(u)=1$ for all $\tta\in\Theta$ and all unipotent elements $u \in G^F$.
\end{rem}

\begin{lem} \label{disc0} Let $\trho_1,\trho_2\in\Irr(\tG^\tF)$ and $\tta
\in \Theta$. If $\trho_1$ and $\trho_2$ belong to the same connected 
component of $\cG(G^F)$, then so do $\tta\cdot\trho_1$ and $\tta\cdot 
\trho_2$. Thus, the action of $\Theta$ on $\Irr(\tG^\tF)$ induces a 
permutation of the connected components of $\cG(\tG^\tF)$ satisfying
\[\tta\cdot\cE(\trho_0)=\cE(\tta\cdot \trho_0)\qquad \mbox{for $\trho_0\in
\cS(\tG^\tF)$ (cf.\ Definition~\ref{defsc})}.\]
\end{lem}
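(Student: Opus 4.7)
The plan is to reduce the assertion to the single identity
\begin{equation*}
\tta \cdot R_w^{\ttheta} \;=\; R_w^{\ttheta \cdot \tta|_{\tT_0[w]}}
\end{equation*}
for every $w \in W$, $\ttheta \in \Irr(\tT_0[w])$ and $\tta \in \Theta$; here the restriction $\tta|_{\tT_0[w]}$ makes sense because $\tT_0[w]\subseteq \tT_0\subseteq \tG^{\tF}$, and $\ttheta\cdot\tta|_{\tT_0[w]}$ is again in $\Irr(\tT_0[w])$ since $\tT_0[w]$ is abelian. First I would establish this identity by passing to the classical model $R_{\tT_w,\underline{\ttheta}}$ via Remark~\ref{remrt} and invoking the standard tensor-product formula of Deligne--Lusztig theory: for any linear character $\tta$ of $\tG^{\tF}$, one has $\tta\otimes R_{\tT,\ttheta}=R_{\tT,(\tta|_{\tT^{\tF}})\cdot\ttheta}$; see e.g.\ Digne--Michel \cite[Chap.~13]{DiMi2}. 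This is the main (though not deep) technical step.

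Granted the identity, the first assertion follows rapidly. It suffices to treat the case in which $\trho_1,\trho_2$ are joined by an edge in $\cG(\tG^{\tF})$, so pick $w\in W$ and $\ttheta\in \Irr(\tT_0[w])$ with $\langle R_w^{\ttheta},\trho_i\rangle\neq 0$ for $i=1,2$. Setting $\ttheta':=\ttheta\cdot \tta|_{\tT_0[w]}$ and using unitarity of the linear character $\tta$,
\begin{equation*}
\langle R_w^{\ttheta'},\tta\cdot\trho_i\rangle
\;=\;\langle \tta\cdot R_w^{\ttheta},\tta\cdot\trho_i\rangle
\;=\;\langle R_w^{\ttheta},\trho_i\rangle\;\neq\;0 \qquad(i=1,2),
\end{equation*}
so $\tta\cdot\trho_1$ and $\tta\cdot\trho_2$ are joined by an edge in $\cG(\tG^{\tF})$. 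Iterating along paths shows that $\tta$ induces an automorphism of the graph $\cG(\tG^{\tF})$, hence a permutation of its connected components.

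For the identification $\tta\cdot\cE(\trho_0)=\cE(\tta\cdot\trho_0)$, recall that $\tG$ has connected centre by definition of a regular embedding, so Theorem~\ref{agree1}(a) applies to $\tG^{\tF}$ and tells us that the sets $\cE(\trho_0')$ for $\trho_0'\in\cS(\tG^{\tF})$ are exactly the connected components of $\cG(\tG^{\tF})$. Moreover, $\tta\cdot\trho_0\in\cS(\tG^{\tF})$ by Remark~\ref{csinv}, so $\cE(\tta\cdot\trho_0)$ is such a connected component. Since the permutation induced by $\tta$ sends the component containing $\trho_0$ to the component containing $\tta\cdot\trho_0$, the equality $\tta\cdot\cE(\trho_0)=\cE(\tta\cdot\trho_0)$ follows. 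The only real obstacle is the tensor-product identity, which is a routine consequence of Deligne--Lusztig's character formula once one works in the $R_{\tT,\ttheta}$-model.
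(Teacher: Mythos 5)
Your proposal follows exactly the same line as the paper's proof: express $R_w^{\ttheta}$ in the classical model $R_{\tT_w,\underline{\ttheta}}$, apply the tensor-product identity $\tta\cdot R_{\tT,\theta}=R_{\tT,(\tta|_{\tT^{\tF}})\cdot\theta}$, deduce stability of edges of $\cG(\tG^{\tF})$ under $\tta$, and then invoke Theorem~\ref{agree1} and Remark~\ref{csinv}. Two points, however, are imprecise.

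First, the claim ``$\tT_0[w]\subseteq\tT_0\subseteq\tG^{\tF}$'' is false: $\tT_0$ is the full (infinite) torus, not its group of $\tF$-fixed points, and $\tT_0[w]$ satisfies $\tF(t)=\dot{w}^{-1}t\dot{w}$ rather than $\tF(t)=t$, so it is \emph{not} a subgroup of $\tG^{\tF}$ (unless $w=1$). Consequently ``$\tta|_{\tT_0[w]}$'' is not literally defined. The cure is exactly what you anticipate in passing to the $R_{\tT_w,\underline{\ttheta}}$-model: there $\tT_w^{\tF}\subseteq\tG^{\tF}$, the restriction $\tta|_{\tT_w^{\tF}}$ is genuinely defined, and $R_{\tT_w,\tta\cdot\underline{\ttheta}}$ equals $R_w^{\ttheta'}$ for a unique $\ttheta'\in\Irr(\tT_0[w])$ (Remark~\ref{remrt}). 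That is what should replace the notation $\ttheta\cdot\tta|_{\tT_0[w]}$. Second, the tensor formula from \cite[Chap.~13]{DiMi2} is not valid for an arbitrary linear character of $\tG^{\tF}$; the hypothesis in \cite[12.6]{DiMi2} is that the class function being tensored in is $p$-constant. Here this holds precisely because $\tta\in\Theta$ has $G^F\subseteq\ker(\tta)$, and all unipotent elements of $\tG$ lie in $\Gder\subseteq G$, so $\tta$ kills unipotent elements and is therefore $p$-constant. This hypothesis should be cited explicitly, since it is the only reason the lemma works only for $\tta\in\Theta$ and not for arbitrary linear characters of $\tG^{\tF}$.

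With these two corrections your argument is exactly the one given in the paper; the ``iterating along paths'' step is equivalent to, though slightly more roundabout than, the paper's direct identification of the component containing $\trho_1$ with $\cE(\trho_0)$ via Theorem~\ref{agree1}.
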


\begin{proof} By Theorem~\ref{agree1}, we have $\trho_1\in\cE(\trho_0)$ 
for some $\trho_0\in\cS(\tG^\tF)$. By the definition of $\cE(\trho_0)$, we 
have $\langle R_w^\ttheta,\trho_1\rangle\neq 0$ and $\langle R_w^{\ttheta},
\trho_0\rangle\neq 0$ for some $w\in W$ and $\ttheta\in\Irr(\tT_0
[w])$. We write $R_w^\ttheta=R_{\tT_w,\underline{\ttheta}}$ as in 
Remark~\ref{remrt}. Now $\tta$ is a ``$p$-constant function'' on 
$\tG^{\tF}$ since $G^F\subseteq \ker(\tta)$; see \cite[7.2]{DiMi2}. So, 
by \cite[12.6]{DiMi2}, we have
\[\tta\cdot R_{\tT_w,\underline{\ttheta}}=R_{\tT_w,\tta\cdot 
\underline{\ttheta}}\]
where, on the right hand side, $\tta$ also denotes the restriction of 
$\tta$ to $\tT_w^F$. Also note that $R_{\tT_w,\tta\cdot 
\underline{\ttheta}}=R_w^{\ttheta'}$ for a unique $\ttheta'\in 
\Irr(\tT_0[w])$ (again, by Remark~\ref{remrt}). Hence, 
\[\langle R_w^{\ttheta'},\tta\cdot \trho_1\rangle=\langle \tta\cdot 
R_{\tT_w,\underline{\ttheta}}, \tta\cdot \trho_1\rangle=\langle R_{\tT_w,
\underline{\ttheta}}, \trho_1\rangle=\langle R_w^\ttheta,
\trho_1\rangle\neq 0\]
and, similarly, $\langle R_w^{\ttheta'},\tta\cdot \trho_0\rangle=\langle 
R_w^\ttheta, \trho_0\rangle\neq 0$. By Remark~\ref{csinv}, we have $\tta
\cdot \trho_0\in\cS(\tG^{\tF})$ and so $\tta\cdot \trho_1 \in\cE(\tta\cdot 
\trho_0)$, by the definition of $\cE(\tta\cdot \trho_0)$. We conclude that, 
if $\trho_1$ and $\trho_2$ belong to $\cE(\trho_0)$, then $\tta\cdot\trho_1$ 
and $\tta\cdot \trho_2$ belong to $\cE(\tta\cdot \trho_0)$. It remains 
to use the characterisation of connected components in Theorem~\ref{agree1}.
\end{proof}

Let $\trho_0\in \cS(\tG^{\tF})$. Then Lemma~\ref{disc0} shows that, in 
particular, the set $\cE(\trho_0)$ is preserved under multiplication 
with any $\tta\in \Theta(\trho_0)$. 

\begin{prop}[Lusztig \protect{\cite[14.1]{L1}, \cite[\S 11]{Lu5}}] 
\label{dics} Let $\trho_0\in\cS(\tG^{\tF})$ and $\trho\in\cE(\trho_0)$.
Let $O\subseteq \cE(\trho_0)$ be the orbit of $\trho$ under the action 
of $\Theta(\trho_0)$. Write 
\[ \trho|_{G^F}=\rho_1+\ldots +\rho_r\;\mbox{ where } \; \rho_1,\ldots,
\rho_r\in\Irr(G^F) \quad \mbox{(see Theorem~\ref{multfree})}.\]
Then $r=|\Theta(\trho)|$ and $\Theta(\trho)\subseteq \Theta(\trho_0)$.
Let $w\in W$ and $\theta\in\Irr(T_0[w])$. If $\theta$ is the restriction 
of some $\ttheta\in \Irr(\tT_0[w])$ such that $\langle R_w^{\ttheta},
\trho_0\rangle\neq 0$, then
\[ \langle R_w^\theta,\rho_i\rangle=\sum_{\trho'\in O} \langle 
R_w^{\ttheta},\trho'\rangle\qquad\mbox{for $1\leq i\leq r$};\] 
otherwise, we have $\langle R_w^\theta, \rho_i\rangle=0$ for $1\leq i\leq r$. 
\end{prop}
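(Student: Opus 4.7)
The plan is to push $\langle R_w^\theta, \rho_i\rangle_{G^F}$ up to $\tG^{\tF}$ via Lemma~\ref{rtres} and Frobenius reciprocity, decompose $\Ind_{G^F}^{\tG^{\tF}}(\rho_i)$ into $\Theta$-translates of $\trho$, and then detect which translates pair nontrivially with $R_w^{\ttheta}$ using Theorem~\ref{agree1} together with Lemma~\ref{disc0}.

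The preliminary assertions come almost for free. The equality $r=|\Theta(\trho)|$ is the computation already carried out in Remark~\ref{multfree1}. For $\Theta(\trho)\subseteq \Theta(\trho_0)$, let $\tta\in\Theta(\trho)$: since $\trho\in\cE(\trho_0)$, Lemma~\ref{disc0} gives $\trho=\tta\cdot\trho\in\cE(\tta\cdot\trho_0)$, so $\cE(\trho_0)=\cE(\tta\cdot\trho_0)$; uniqueness of the semisimple character in a connected component (Theorem~\ref{agree1}(a)) then forces $\tta\cdot\trho_0=\trho_0$.

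For the main identity, standard Clifford theory---using that $\Theta$ acts transitively on the constituents of $\trho|_{G^F}$, that restrictions are multiplicity-free by Theorem~\ref{multfree}, and that $|\Theta|=[\tG^{\tF}:G^F]$ via the sequence in Remark~\ref{rege1}---produces
\[ \Ind_{G^F}^{\tG^{\tF}}(\rho_i)=\sum_{[\tta]\in\Theta/\Theta(\trho)}\tta\cdot\trho, \]
a sum of pairwise distinct irreducible characters of $\tG^{\tF}$, each appearing with multiplicity one. Fix any $\ttheta\in \Irr(\tT_0[w])$ extending $\theta$. Combining Lemma~\ref{rtres} with Frobenius reciprocity yields
\[ \langle R_w^\theta,\rho_i\rangle_{G^F}=\langle R_w^{\ttheta},\Ind_{G^F}^{\tG^{\tF}}(\rho_i)\rangle_{\tG^{\tF}}=\sum_{[\tta]\in\Theta/\Theta(\trho)}\langle R_w^{\ttheta},\tta\cdot\trho\rangle, \]
which is visibly independent of $i$. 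All constituents of $R_w^{\ttheta}$ lie in one connected component of $\cG(\tG^{\tF})$; under the hypothesis $\langle R_w^{\ttheta},\trho_0\rangle\neq 0$ that component is $\cE(\trho_0)$. Thus a surviving term requires $\tta\cdot\trho\in\cE(\trho_0)\cap\cE(\tta\cdot\trho_0)$, and uniqueness of the semisimple character forces $\tta\in\Theta(\trho_0)$. The remaining sum runs over $\Theta(\trho_0)/\Theta(\trho)$, and $[\tta]\mapsto\tta\cdot\trho$ identifies this coset space with the orbit~$O$ (using $\Theta(\trho)\subseteq\Theta(\trho_0)$), yielding the stated formula.

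For the remaining case, suppose no extension $\ttheta$ of $\theta$ satisfies $\langle R_w^{\ttheta},\trho_0\rangle\neq 0$. Pick any such extension and consider the same sum; if some term $\langle R_w^{\ttheta},\tta\cdot\trho\rangle$ were nonzero, the component of $R_w^{\ttheta}$ would contain $\tta\cdot\trho\in\cE(\tta\cdot\trho_0)$ and hence equal $\cE(\tta\cdot\trho_0)$, so by Example~\ref{reguni} together with uniqueness of the semisimple constituent we would obtain $\langle R_w^{\ttheta},\tta\cdot\trho_0\rangle\neq 0$. The formula $\tta^{-1}\cdot R_w^{\ttheta}=R_w^{\tta^{-1}\cdot\ttheta}$ from the proof of Lemma~\ref{disc0} then gives $\langle R_w^{\tta^{-1}\cdot\ttheta},\trho_0\rangle\neq 0$, and since $\tta|_{T_w^F}=1$ (as $T_w^F\subseteq G^F$) the twist $\tta^{-1}\cdot\ttheta$ restricts to the same $\theta$, a contradiction. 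The main obstacle throughout is bookkeeping: one must keep the $\Theta$-action on $\Irr(\tT_0[w])$ compatible with the identification in Remark~\ref{remrt} and verify that twisting by elements of $\Theta$ preserves the restriction to $T_0[w]$, so that the counting by orbits of $\Theta(\trho_0)$ really matches what the Clifford-theoretic expansion produces.
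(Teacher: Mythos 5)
Your proposal follows the same route as the paper's proof: push the scalar product up to $\tG^{\tF}$ via Lemma~\ref{rtres} and Frobenius reciprocity, decompose $\Ind_{G^F}^{\tG^{\tF}}(\rho_i)$ as the sum over the full $\Theta$-orbit of $\trho$, and then use Lemma~\ref{disc0} together with the uniqueness of the semisimple constituent in each connected component (Theorem~\ref{agree1}) to restrict the surviving terms to the $\Theta(\trho_0)$-orbit $O$; the contrapositive twist by $\tta^{-1}$ (equivalently the complex conjugate $\tta^*$), using $\tta|_{T_w^F}=1$, is exactly the paper's device for the ``otherwise'' case. The only difference is cosmetic bookkeeping (parametrizing the orbit by cosets of $\Theta(\trho)$ rather than summing over $O'$ directly), so the argument is correct and essentially identical.
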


\begin{proof} By \ref{multfree1}, we have $r=|\Theta(\trho)|$. If $\tta\in 
\Theta(\trho)$, then $\trho=\tta\cdot \trho$ and $\tta\cdot \trho_0\in
\cS(\tG^{\tF})$ belong to the same connected component of $\cG(G^F)$ 
by Lemma~\ref{disc0}. Hence, we must have $\trho_0=\tta\cdot \trho_0$ 
by Theorem~\ref{agree1}. This shows that $\Theta(\trho)\subseteq 
\Theta(\trho_0)$. It remains to prove the multiplicity formula.
Let $w\in W$ and $\theta\in\Irr(T_0[w])$ be the restriction of some 
$\ttheta\in \Irr(\tT_0[w])$. By Lemma~\ref{rtres} and Frobenius 
reciprocity, we have 
\begin{equation*}
\langle R_w^\theta,\rho_i\rangle=\langle R_w^{\ttheta}|_{G^F},\rho_i\rangle=
\langle R_w^{\ttheta},\mbox{Ind}_{G^F}^{\tG^{\tF}}(\rho_i)\rangle=
\sum_{\trho'\in O'} \langle R_w^{\ttheta},\trho'\rangle\tag{$*$}
\end{equation*}
where $O'\subseteq \Irr(G^F)$ is the full orbit of $\trho$ under the
action of $\Theta$. Now assume that $\langle R_w^\ttheta,\trho_0\rangle
\neq 0$. Let $\trho'\in O'$ be such that $\langle R_w^{\ttheta},\trho'
\rangle\neq 0$; then $\trho'\in\cE(\trho_0)$. If we write $\trho'=\tta 
\cdot \trho$ where $\tta\in\Theta$, then Lemma~\ref{disc0} shows that 
$\trho'$ and $\tta \cdot \trho_0$ belong to the same connected component 
of $\cG(\tG^{\tF})$ and so $\tta\cdot\trho_0=\trho_0$, that is, $\trho'
\in O$. Thus, the desired formula holds in this case.

Finally, assume that $\langle R_w^\theta,\rho_i\rangle\neq 0$. Then we 
must show that $\ttheta$ can be chosen such that $\langle R_w^\ttheta,
\trho_0\rangle \neq 0$. Now, ($*$) shows that $\langle R_w^{\ttheta},
\trho' \rangle\neq 0$ for some $\trho'\in O'$. Let $\tta\in\Theta$ be such 
that $\trho'=\tta \cdot \trho$, and let $\tta^*$ be the complex conjugate 
of $\tta$.  Now write again $R_w^\theta=R_{T_w,\underline{\theta}}$ and 
$R_w^\ttheta=R_{\tT_w,\underline{\ttheta}}$, as in the proof of 
Lemma~\ref{disc0}. Then we have:
\[\langle R_{\tT_w,\tta^*\cdot \underline{\ttheta}},\trho \rangle=
\langle \tta^*\cdot R_{\tT_w,\underline{\ttheta}},\trho\rangle=
\langle R_{\tT_w,\underline{\ttheta}},\tta\cdot \trho\rangle=
\langle R_w^{\ttheta},\trho'\rangle \neq 0.\] 
Now define $\ttheta'\in\Irr(\tT_0[w])$ by the condition that 
$\underline{\ttheta'}=\tta^*\cdot \underline{\ttheta}$; then 
$\langle R_w^{\ttheta'},\trho \rangle\neq 0$. Since $\trho\in\cE(\trho_0)$,
we also have $\langle R_w^{\ttheta'},\trho_0 \rangle\neq 0$. Finally, since
$\underline{\theta}$ is the restriction of $\underline{\ttheta}$ to $T_w^F$,
we also have that $\underline{\theta}$ is the restriction of 
$\underline{\ttheta}'$ to $T_w^F$. So the restriction of $\ttheta'$ 
to $T_0[w]$ equals $\theta$, as required. 
\end{proof}

The crucial ingredient in the above result is the action of $\Theta$ on 
$\Irr(\tG^{\tF})$, and it would be very useful to describe this action in 
terms of the bijections in Theorem~\ref{lu423}. This is done in the further
parts of \cite{Lu5}, leading to the final statement of a ``Jordan 
decomposition'' in \cite[\S 12]{Lu5}. In this respect, the following 
example plays a special role; see part (a) of the proof of 
\cite[Prop.~8.1]{Lu5}. 

\begin{exmp}[Lusztig \protect{\cite[p.~353]{L1}}] \label{disexp} Assume 
that $p>2$ and $G$ is simple of simply-connected type $E_7$. Then $Z(G)$ 
has order~$2$ and so an irreducible character of $\tG^{\tF}$ either remains
irreducible upon restriction to $G^F$, or the restriction splits up as a
sum of two distinct irreducible characters. Now, there is a semisimple 
character $\trho_0\in \cS(\tG^{\tF})$ such that, in the setting of 
Theorem~\ref{lu423}, the group $W_{\lambda,n}$ is of type $E_6$ and 
$\gamma=\id$. By \ref{abs42}, the set $\fX^\circ (W_{\lambda,n},\gamma)
\hookrightarrow\fX(W_{\lambda,n},\gamma)$ contains two elements of the 
form $(\theta,3)$, $(\theta^2,3)$. Let $\trho_1,\trho_2$ be the 
corresponding irreducible characters in $\cE(\trho_0)$. The question is
whether these two characters are fixed by $\Theta(\trho_0)$ or not, and 
this turns out to be difficult to tell without knowing some extra
information, e.g., sufficiently many character values. In 
\cite[p.~353]{L1}, it is shown by a separate argument that the 
restrictions of $\trho_1,\trho_2$ to $G^F$ are reducible; so the conclusion 
is that these two characters must be fixed by~$\Theta(\trho_0)$.  
\end{exmp}

Here is an example where $O$ in Proposition~\ref{dics} 
has more than one element.

\begin{exmp} \label{disexp1} Let $G^F=\mbox{Sp}_4(\F_q)$, where $q$ is odd. 
Then we have a regular embedding $G\subseteq \tG$ where $\tG^{\tF}=
\mbox{CSp}_4(\F_q)$ is the conformal symplectic group. Here, $\tG^{\tF}/G^F
\cong \F_q^\times$ and $Z(G)$ has order~$2$. The character tables 
of $G^F$ and $\tG^{\tF}$ are known by Srinivasan \cite{Sr68} and Shinoda 
\cite{shin}, respectively. Let $\trho_0\in\cS(\tG^\tF)$ be one of the 
$\frac{1}{2}(q-1)$ semisimple characters of $\tG^\tF$ denoted by $\tau_1
(\lambda)$ in \cite[5.1]{shin}. We have 
\[\trho_0(1)=q^2+1\qquad\mbox{and}\qquad \Theta(\trho_0)=\langle \tta
\rangle\cong \Z/2\Z\]
where $\tta\in\Theta$ is the unique character of order~$2$. (In the setting
of Remark~\ref{remdual}, $\trho_0$ corresponds to a semisimple $s\in\tG^*$
such that the Weyl group of $C_{\tG^*}(s)$ is of type $A_1 \times A_1$.) 
By \cite[5.3]{shin}, the corresponding connected component of 
$\cG(\tG^\tF)$ contains exactly four irreducible characters. Using the 
notation in \cite[5.1]{shin}, we have 
\[ \cE(\trho_0)=\{ \tau_1(\lambda),\,\tau_2(\lambda),\,
\tau_2(\lambda^*),\, \tau_3(\lambda)\}\qquad (\trho_0=\tau_1(\lambda)),\]
where $\tau_2(\lambda)(1)=\tau_2(\lambda^*)(1)=q(q^2+1)$ and $\tau_3
(\lambda)(1)=q^2(q^2+1)$. Now we restrict these characters to $G^F$. 
Using the notation of \cite{Sr68}, we find that 
\[ \tau_1(\lambda)|_{G^F}=\theta_3+\theta_4,\quad
 \tau_3(\lambda)|_{G^F}=\theta_1+\theta_2,\quad 
\tau_2(\lambda)|_{G^F}=\tau_2(\lambda^*)|_{G^F}=\Phi_9,\]
where $\theta_1,\theta_2,\theta_3,\theta_4,\Phi_9\in\Irr(G^F)$. We conclude
that $\tau_2(\lambda^*)=\tta\cdot \tau_2(\lambda)$ and so 
\[O=\{\tau_2(\lambda),\tau_2(\lambda^*)\} \quad\mbox{is the 
$\Theta(\trho_0)$-orbit of $\tau_2(\lambda)$}.\]
Let us evaluate the formula in Proposition~\ref{dics} in some particular 
cases. First, by \cite[5.3]{shin}, there is some $\ttheta\in\Irr(\tT_0[1])$ 
such that 
\[ R_1^{\ttheta}=\tau_1(\lambda)+\tau_2(\lambda)+\tau_2(\lambda^*)+
\tau_3(\lambda).\]
Let $\theta\in \Irr(T_0[1])$ be the restriction of $\ttheta$. Then, by
Proposition~\ref{dics}, we have
\[ \langle R_1^\theta,\Phi_9\rangle=\langle R_1^\ttheta,\tau_2(\lambda)
\rangle+ \langle R_1^\ttheta,\tau_2(\lambda^*)\rangle=1+1=2,\]
(which is consistent with \cite[p.~191]{Sr94}). On the other hand, let 
$w_\alpha\in W$ be the simple reflection corresponding to the short simple 
root~$\alpha$. Then, again by \cite[5.3]{shin}, there is some $\ttheta'\in
\Irr(\tT_0[w_\alpha])$ such that 
\[ R_{w_\alpha}^{\ttheta'}=\tau_1(\lambda)-\tau_2(\lambda)+\tau_2(\lambda^*)-
\tau_3(\lambda).\]
Let $\theta'\in \Irr(T_0[w_\alpha])$ be the restriction of $\ttheta'$. 
Then, by Proposition~\ref{dics}, we have
\[ \langle R_{w_\alpha}^{\theta'},\Phi_9\rangle=\langle
R_{w_\alpha}^{\ttheta'}, \tau_2(\lambda) \rangle+ \langle 
R_{w_\alpha}^{\ttheta'},\tau_2(\lambda^*) \rangle=-1+1=0,\]
(which is consistent with \cite[p.~191]{Sr94}).  Thus, the terms in the
sum $\sum_{\trho'\in O} \langle R_w^\ttheta, \trho'\rangle$ in 
Proposition~\ref{dics} can actually take different values. (Similar 
examples also exist for the regular embedding $\SL_n(k) \subseteq 
\GL_n(k)$, as was pointed out by C.~Bonnaf\'e.)
\end{exmp}

\begin{rema} \label{plan2} The above results lead 
to a general plan for classifying the irreducible characters of $G^F$, 
assuming that the analogous problem for $\tG^{\tF}$ has been solved 
(see~\ref{plan1}). There is one further step to do: determine the action 
of $\Theta$ on $\Irr(\tG^{\tF})$. Recall from Remark~\ref{csinv} that 
$\cS(\tG^{\tF})$ is invariant under this action. Then we have a partition
\begin{center}
$\Irr(G^F)={\displaystyle\bigsqcup}_{\trho_0}\cE(G^F,\trho_0)$
\end{center} 
where $\trho_0$ runs over a set of representatives of the $\Theta$-orbits on
$\cS(\tG^\tF)$ and 
\[ \cE(G^F,\trho_0):=\{\rho\in\Irr(G^F)\mid \langle \trho|_{G^F},
\rho\rangle\neq 0\mbox{ for some $\trho\in\cE(\trho_0)$}\}\]
for $\trho_0\in \cS(\tG^\tF)$. Using Proposition~\ref{dics}, we can 
work out the number of irreducible constituents $\rho_i$ of any $\trho
\in\cE(\trho_0)$ and the multiplicities $\langle R_w^\theta,\rho_i\rangle$;
the degree polynomials of $\rho_i$ are determined by Remark~\ref{abs32a}. 
Note that the above union is indeed disjoint: if $\rho\in\Irr(G^F)$ and 
$\langle \trho|_{G^F},\rho \rangle\neq 0$, $\langle \trho'|_{G^F},\rho 
\rangle\neq 0$, where $\trho\in\cE(\trho_0)$ and $\trho'\in\cE(\trho_0')$,
then there exists some $\tta\in\Theta$ such that $\trho'=\tta\cdot\trho$ 
(see \ref{multfree1}) and so $\trho_0'=\tta\cdot \trho_0$ (see
Lemma~\ref{disc0} and Theorem~\ref{agree1}).
\end{rema}

In a somewhat different setting and formulation, part (b) of the following 
result appears in Bonnaf\'e \cite[Prop.~11.7]{Bo3}.

\begin{prop} \label{ratser1} Let $\trho_0\in\cS(\tG^\tF)$ and $\rho_0\in 
\Irr(G^F)$ be such that $\langle \trho_0|_{G^F},\rho_0\rangle\neq 0$.
\begin{itemize}
\item[(a)] We have $\rho_0\in\cS(G^F)$ (see Remark~\ref{abs32a}(b)).
\item[(b)] The set $\cE(G^F,\trho_0)$ (see \ref{plan2}) equals the set 
$\cE(\rho_0)$ (see Definition~\ref{defsc}).
\item[(c)] The partition of $\Irr(G^F)$ in \ref{plan2} corresponds 
precisely to the partition given by the connected components of $\cG(G^F)$. 
\end{itemize}
\end{prop}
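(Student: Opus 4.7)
The plan is to derive (a) immediately from Remark~\ref{abs32a}(b), to use Proposition~\ref{dics}, especially its ``otherwise'' clause, as the key tool for (b), and to obtain (c) by an induction along paths in $\cG(G^F)$ built on the same clause. For (a), $\rho_0$ is by hypothesis a constituent of $\trho_0|_{G^F}$, and Remark~\ref{abs32a}(b) states that all constituents of $\trho_0|_{G^F}$ lie in $\cS(G^F)$ whenever $\trho_0\in \cS(\tG^\tF)$.

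For (b), both inclusions are established via lifting characters from $T_0[w]$ to $\tT_0[w]$. For $\cE(\rho_0)\subseteq \cE(G^F,\trho_0)$, given $(w,\theta)$ with $\langle R_w^\theta,\rho\rangle$ and $\langle R_w^\theta,\rho_0\rangle$ both nonzero, I would lift $\theta$ to $\ttheta\in\Irr(\tT_0[w])$ (possible since $\tT_0[w]$ is abelian) and use Lemma~\ref{rtres} to get $R_w^\ttheta|_{G^F}=R_w^\theta$. The nonvanishing $\langle R_w^\ttheta|_{G^F},\rho_0\rangle\neq 0$ together with Clifford theory (cf.~\ref{multfree1}) yields some $\tta\in\Theta$ with $\langle R_w^\ttheta,\tta\cdot\trho_0\rangle\neq 0$, equivalently $\langle \tta^*\cdot R_w^\ttheta,\trho_0\rangle\neq 0$. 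As in the proof of Lemma~\ref{disc0}, $\tta^*\cdot R_w^\ttheta=R_w^{\ttheta''}$ for a unique $\ttheta''\in\Irr(\tT_0[w])$, and since $\tta^*$ is trivial on $G^F\supseteq T_0[w]$ one checks $\ttheta''|_{T_0[w]}=\theta$. Then Lemma~\ref{rtres} gives $R_w^{\ttheta''}|_{G^F}=R_w^\theta$, so $\langle R_w^\theta,\rho\rangle\neq 0$ produces some $\trho\in\Irr(\tG^\tF)$ with $\rho\in\trho|_{G^F}$ and $\langle R_w^{\ttheta''},\trho\rangle\neq 0$; together with $\langle R_w^{\ttheta''},\trho_0\rangle\neq 0$, this places $\trho\in\cE(\trho_0)$, hence $\rho\in\cE(G^F,\trho_0)$. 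For the reverse inclusion $\cE(G^F,\trho_0)\subseteq \cE(\rho_0)$, take $\rho$ a constituent of $\trho|_{G^F}$ with $\trho\in\cE(\trho_0)$, and use Proposition~\ref{dim2} to pick $(w,\theta)$ with $\langle R_w^\theta,\rho\rangle\neq 0$. The contrapositive of Proposition~\ref{dics}'s ``otherwise'' clause forces $\theta=\ttheta|_{T_0[w]}$ for some $\ttheta$ with $\langle R_w^\ttheta,\trho_0\rangle\neq 0$. Now Proposition~\ref{dics} applied to $\trho_0$ itself (for which the $\Theta(\trho_0)$-orbit of $\trho_0$ is the singleton $\{\trho_0\}$) yields $\langle R_w^\theta,\rho_0\rangle=\langle R_w^\ttheta,\trho_0\rangle\neq 0$, so $\rho\in\cE(\rho_0)$.

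For (c), combining (a), (b) and the well-definedness of the partition in \ref{plan2} shows that $\Irr(G^F)=\bigsqcup_{\trho_0}\cE(G^F,\trho_0)$ coincides with $\bigsqcup_{\rho_0}\cE(\rho_0)$ for suitable $\rho_0\in\cS(G^F)$; each piece is contained in the connected component of its $\rho_0$ in $\cG(G^F)$ by the definition of the graph. To see that each piece is a full component, I proceed by induction along a path in $\cG(G^F)$. Suppose $\rho\in\cE(G^F,\trho_0)$ and $\rho\sim \rho^*$ in $\cG(G^F)$ via some $R_w^\theta$. Then $\rho$ is a constituent of $\trho|_{G^F}$ for some $\trho\in\cE(\trho_0)$, and the contrapositive of the ``otherwise'' clause (applied to $\trho$) gives $\ttheta$ lifting $\theta$ with $\langle R_w^\ttheta,\trho_0\rangle\neq 0$. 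By Lemma~\ref{rtres}, $R_w^\ttheta|_{G^F}=R_w^\theta$, so $\langle R_w^\theta,\rho^*\rangle\neq 0$ produces $\trho^*\in\Irr(\tG^\tF)$ with $\rho^*\in\trho^*|_{G^F}$ and $\langle R_w^\ttheta,\trho^*\rangle\neq 0$; hence $\trho^*\in\cE(\trho_0)$ and $\rho^*\in\cE(G^F,\trho_0)$. The main obstacle throughout is the Clifford-theoretic bookkeeping in part (b): one must carefully juggle lifts $\ttheta$ of $\theta$, the $\Theta$-action on $\Irr(\tG^\tF)$, and the identity $\tta\cdot R_{\tT_w,\underline{\ttheta}}=R_{\tT_w,\tta\cdot\underline{\ttheta}}$ from Lemma~\ref{disc0}, so that the relevant nonzero multiplicities descend to $\trho_0$ and $\rho_0$ themselves rather than to some $\Theta$-translates thereof; once this is set up correctly, the ``otherwise'' clause of Proposition~\ref{dics} does the rest of the work.
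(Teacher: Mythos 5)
Your proof is correct but takes a genuinely different route from the paper's, chiefly in establishing the inclusion $\cE(G^F,\trho_0)\subseteq\cE(\rho_0)$. The paper invokes the nontrivial fact (from Lusztig, cited as [L0a, 7.7]) that $\sum_{\trho\in\cE(\trho_0)}\trho(1)\trho$ is a uniform function, writes it as $\sum_i c_iR_{w_i}^{\ttheta_i}$, and extracts a pair $(w_i,\ttheta_i)$ with the right properties by pairing the identity with $\rho$ on one side and with an arbitrary $\trho\in\cE(\trho_0)$ on the other, then appeals to Example~\ref{reguni} and Theorem~\ref{agree1}. You sidestep that auxiliary result entirely: Proposition~\ref{dim2} supplies $(w,\theta)$ with $\langle R_w^\theta,\rho\rangle\neq0$, the contrapositive of the ``otherwise'' clause of Proposition~\ref{dics} (applied to some $\trho\in\cE(\trho_0)$ over $\rho$) upgrades $\theta$ to a lift $\ttheta$ with $\langle R_w^\ttheta,\trho_0\rangle\neq0$, and a second application of Proposition~\ref{dics} to $\trho_0$ itself (whose $\Theta(\trho_0)$-orbit is $\{\trho_0\}$) gives $\langle R_w^\theta,\rho_0\rangle=\langle R_w^\ttheta,\trho_0\rangle\neq0$. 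This is more self-contained, deriving everything from the multiplicity formula already proved in Proposition~\ref{dics}; the paper gets the same pair via the uniform-function machinery. Your edge-closure argument in (c) and the lifting-and-twisting argument for $\cE(\rho_0)\subseteq\cE(G^F,\trho_0)$ are essentially the same move as the paper's opening paragraph, and (a) is handled identically. One small imprecision to fix: $T_0[w]\not\subseteq G^F$ for $w\neq1$; what you actually use is that $\tta^*$ is trivial on $T_w^F\subseteq G^F$, which under the identification $T_0[w]\cong T_w^F$ from Remark~\ref{remrt} yields $\ttheta''|_{T_0[w]}=\theta$ as intended.
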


%\begin{prop} \label{ratser1} Let $\trho_0\in\cS(\tG^\tF)$. Then the 
%characters in $\cE(G^F,\trho_0)$ (see \ref{plan2}) form a connected 
%component of $\cG(G^F)$. If $\rho_0\in \Irr(G^F)$ is such that $\langle 
%\trho_0|_{G^F},\rho_0\rangle \neq 0$, then $\rho_0\in\cS(G^F)$ is a 
%semisimple character and $\cE(G^F,\trho_0)=\cE(\rho_0)$. 
%\end{prop}

\begin{proof} First we show that the characters in a connected component of 
$\cG(G^F)$ are all contained in a set $\cE(G^F,\trho_0)$ as above. Indeed, 
let $\rho,\rho'\in\Irr(G^F)$ and assume that $\langle R_w^\theta,\rho\rangle
\neq 0$, $\langle R_w^\theta,\rho' \rangle\neq 0$ for some $w,\theta$. 
Choose any $\ttheta\in\Irr(\tT_0[w])$ such that $\theta$ is the 
restriction of~$\ttheta$. By Lemma~\ref{rtres} and Frobenius reciprocity, 
there exist $\trho,\trho'\in \Irr(\tG^\tF)$ such that the scalar products 
$\langle R_w^\ttheta,\trho\rangle$, $\langle R_w^\ttheta,\trho'\rangle$, 
$\langle \trho|_{G^F},\rho\rangle$, $\langle \trho'|_{G^F},\rho'\rangle$ 
are all non-zero. Then $\trho,\trho'$ belong to the same connected 
component of $\cG(\tG^\tF)$ and so $\trho, \trho'\in\cE(\trho_0)$ for 
some $\trho_0\in\cS(\tG^\tF)$ (see Theorem~\ref{agree1}). But then, by 
definition, $\rho, \rho'\in  \cE(G^F,\trho_0)$, as claimed. It now remains
to show that $\cE(G^F,\trho_0)\subseteq \cE(\rho_0)$. 
So let $\rho\in\cE(G^F,\trho_0)$. We must show that $\langle R_w^\theta,
\rho\rangle \neq 0$ and $\langle R_w^\theta,\rho_0 \rangle \neq 0$ for some 
pair $(w,\theta)$. For this purpose, as in the proof of 
\cite[Prop.~11.7]{Bo3}, we consider the linear combination 
$\sum_{\trho\in\cE(\trho_0)} \trho(1) \trho$. By \cite[7.7]{L0a}, that 
linear combination is a uniform function. So there exist $w_i\in W$ and 
$\ttheta_i\in \Irr(\tT_0[w_i])$ (where $1\leq i\leq n$ for some~$n$), such
that 
\[ \sum_{\trho\in\cE(\trho_0)} \trho(1)\trho=\sum_{1\leq i\leq n}
c_i R_{w_i}^{\ttheta_i} \qquad\mbox{where $c_i\in\C$, $c_i\neq 0$ 
for all $i$}.\]
Here, we can assume that $R_{w_i}^{\ttheta_i}$ and $R_{w_j}^{\ttheta_j}$
are orthogonal if $i\neq j$ (see Proposition~\ref{scform}). Let $\theta_i$ 
denote the restriction of $\ttheta_i$ to $T_0[w_i]$. Now $\rho\in
\cE(G^F,\trho_0)$ has a non-zero scalar product with the restriction of 
the left hand side of the above identity to~$G^F$. Hence, using 
Lemma~\ref{rtres}, there exists some $i$ such that 
$\langle R_{w_i}^{\theta_i},\rho\rangle \neq 0$. On the other hand, 
since $c_i\neq 0$, the above identity shows that $\langle 
R_{w_i}^{\ttheta_i},\trho\rangle \neq 0$ for some $\trho\in\cE(\trho_0)$. 
By Example~\ref{reguni}, there exists some $\trho_0'\in\cS(\tG^\tF)$ 
such that $\langle R_{w_i}^{\ttheta_i},\trho_0'\rangle\neq 0$. Then 
$\trho\in\cE(\trho_0')$ and so Theorem~\ref{agree1} shows that 
$\trho_0'=\trho_0$. Thus, we have $\langle R_{w_i}^{\ttheta_i},
\trho_0\rangle \neq 0$. Now Proposition~\ref{dics} yields that 
$\langle R_{w_i}^{\theta_i}, \rho_0\rangle=\langle R_{w_i}^{\ttheta_i},
\trho_0\rangle\neq 0$; note that the orbit of $\trho_0$ under the action 
of $\Theta(\trho_0)$ is just $\{\trho_0\}$. Thus, $\rho\in\cE(\rho_0)$
as claimed. In particular, $\rho,\rho_0$ belong to the same connected 
component of $\cG(G^F)$.
\end{proof}

\begin{rem} \label{ratser} As promised in \ref{dlgraph}, we can now finally 
clarify the relations between the various partitions of $\Irr(G^F)$ given 
in terms of 
\begin{itemize}
\item the connected components of $\cG(G^F)$, 
\item ``rational series'' and 
\item ``geometric conjugacy classes''. 
\end{itemize}
%(a) First of all, by Theorem~\ref{agree1} and Proposition~\ref{ratser1}, the 
%connected components of $\cG(G^F)$ are given by the sets $\cE(\rho_0)$ in 
%Definition~\ref{defsc}. (If $Z(G)$ is not connected, then it can happen
%that $\cE(\rho_0)=\cE(\rho_0')$ where $\rho_0\neq \rho_0'$ in $\cS(G^F)$;
%this happens precisely when both $\rho_0$ and $\rho_0'$ occur in the
%restriction of some $\trho_0\in \cS(\tG^\tF)$ to $G^F$.)
%
If $Z(G)$ is connected, then Theorem~\ref{agree1}(c) shows that the 
implications in \ref{dlgraph} are, in fact, equivalences; so the three 
partitions agree in this case.

Now assume that $Z(G)$ is not connected. Then Examples~\ref{expsl2} and
\ref{expsl2a} already show that ``geometric conjugacy classes'' may be 
strictly larger than the connected components of $\cG(G^F)$. By
Proposition~\ref{ratser1}, the latter are given by the sets $\cE(G^F,
\trho_0)$. In turn, the sets $\cE(G^F,\trho_0)$ are known to be equal 
to the ``rational series'' of characters of $G^F$. (This follows from 
Theorem~\ref{agree1}(c) and \cite[Prop.~11.7]{Bo3}, \cite[7.5]{L0a}.) 
Thus, in general, ``rational series'' are just given by the connected 
components of $\cG(G^F)$; in other words, the first implication in
\ref{dlgraph} is always an equivalence. (This also follows from 
Digne--Michel \cite[Theorem~14.51]{DiMi2}.) 
\end{rem}

%%%%%%%%%%%%%%%%%%%%%%%%%%%%%%%%%%%%%%%%%%%%%%%%%%%%%%%%%%%%%%%%%%%%%%%
\section{Character sheaves} \label{seccs}

Finally, we turn to the problem of computing the values of the
irreducible characters of $G^F$. As in \cite{L7}, it will be convenient 
to express this in terms of finding the base changes between various vector
space bases of $\mbox{CF}(G^F)$.

\begin{rema} \label{abs61}
The first basis to consider will be denoted by $\bB_0$; it consists of the 
characteristic functions $f_C\colon G^F\rightarrow \C$ of the various 
conjugacy classes $C$ of $G^F$. (Here, $f_C$ takes the value $1$ on $C$
and the value $0$ on the complement of $C$.) This basis is well-understood; 
see, e.g., the chapters on conjugacy classes in Carter's book \cite{C2} 
(and the references there). As a model example, see Mizuno's \cite{Miz}
computation of all the conjugacy classes of $G^F=E_6(\F_q)$. The second 
basis is, of course, $\bA_0=\Irr(G^F)$, the set of irreducible characters 
of $G^F$. Thus, the character table of $G^F$ is the matrix which expresses
the base change between $\bA_0$ and $\bB_0$. Note that the results discussed 
in the previous sections provide a parametrization of $\bA_0=\Irr(G^F)$ in
a way which is almost totally unrelated with the basis $\bB_0$. 
\end{rema}

\begin{rema} \label{abs62}
The third basis to consider will be denoted by $\bA_1$; it consists
of Lusztig's ``almost characters''. These are defined as certain explicit 
linear combinations of the irreducible characters of $G^F$. (The definition
first appeared in \cite[4.25]{L1}, assuming that $Z(G)$ is connected; see 
\cite{L12a} for the general case.) Almost characters are only well-defined 
up to multiplication by a root of unity, but we can choose a set $\bA_1$ 
of almost characters which form an orthonormal basis of $\mbox{CF}(G^F)$. 
The matrix which expresses the base change between $\bA_0=\Irr(G^F)$ 
and $\bA_1$ is explicitly known and given in terms of Lusztig's 
``non-abelian Fourier matrices''.
\end{rema}

\begin{rema} \label{abs63} Already in \cite[13.7]{L1}, Lusztig 
conjectured that there should be a fourth basis, providing a geometric 
interpretation of the almost characters. The theory of character sheaves 
\cite{L2} gives a positive answer to this conjecture. Character sheaves 
are certain simple perverse sheaves in the bounded derived category 
$\mathcal{D}G$ of constructible $\overline{\Q}_\ell$-sheaves
(in the sense of Beilinson, Bernstein, Deligne \cite{bbd}) on the algebraic
group $G$, which are equivariant for the action of $G$ on itself by 
conjugation. If $A$ is such a character sheaf, we consider its inverse
image $F^*A$ under the Frobenius map. If $F^*A \cong A$ in $\mathcal{D}G$, 
we choose an isomorphism $\phi \colon F^*A \stackrel{\sim}{\rightarrow} A$ 
and then define a class function $\chi_A\in \mbox{CF}(G^F)$, called 
``characteristic function'', by $\chi_A(g)=\sum_i (-1)^i \mbox{Trace}
(\phi,\cH_g^i(A))$ for $g \in G^F$, where $\cH_g^i(A)$ are the stalks 
at~$g$ of the cohomology sheaves of~$A$ (see \cite[8.4]{L2}). Such a 
function is only well-defined up to multiplication with a non-zero scalar. 
Let $\hat{G}^F$ be the set of character sheaves (up to isomorphism)
which are isomorphic to their inverse image under~$F$, and set
\[ \bB_1:=\{\chi_A \mid A \in \hat{G}^F\} \subseteq \mbox{CF}(G^F),\]
where, for each $A \in \hat{G}^F$, the characteristic function $\chi_A$
is defined with respect to a fixed choice of $\phi \colon F^*A 
\stackrel{\sim}{\rightarrow} A$. In \cite[\S 17.8]{L2}, Lusztig states
a number of properties of character sheaves. These include a ``multiplicity
formula'' \cite[17.8.3]{L2} (rather analogous to the Main Theorem~4.23 
of \cite{L1}), a ``cleanness condition'' and a ``parity condition'' 
\cite[17.8.4]{L2}, and a characterisation of arbitrary irreducible 
``cuspidal perverse sheaves'' on $G$ \cite[17.8.5]{L2}. In 
\cite[Theorem~23.1]{L2}, these properties were proved under a mild 
condition on~$p$; the conditions on~$p$ were later completely removed by
Lusztig \cite{L10}. (As a side remark we mention that a portion of the 
proof in \cite{L10} relies on computer calculations, as discussed in 
\cite[5.12]{some}.) As pointed out in \cite[3.10]{L10}, this allows us to 
state without any assumption on $G$, $p$ or $q$:
\end{rema}

\begin{thm}[Lusztig \protect{\cite{L2}, \cite{L10}}] \label{thm2} For
$A \in \hat{G}^F$, an isomorphism $\phi \colon F^*A 
\stackrel{\sim}{\rightarrow} A$ can be chosen such that the values of 
$\chi_A$ belong to a cyclotomic field and $\langle \chi_A,\chi_A\rangle
=1$. The corresponding set $\bB_1=\{\chi_A \mid A \in \hat{G}^F\}$ is an
orthonormal basis of $\operatorname{CF}(G^F)$.
\end{thm}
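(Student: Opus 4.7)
The plan is to deduce the theorem from two ingredients in Lusztig's character sheaf programme: an orthogonality statement for characteristic functions, and a counting statement matching $|\hat{G}^F|$ with $\dim_\C \operatorname{CF}(G^F)$. Both are contained in \cite[17.8]{L2}, conditional on the cleanness and parity properties \cite[17.8.4]{L2}, and the latter are exactly what \cite{L10} establishes without restriction on $p$ or $q$. The hard part---the reason the theorem can now be asserted unconditionally---is precisely this cleanness/parity input; once it is granted, the present statement is essentially a packaging exercise, and the role of the proof is to spell out the normalization of $\phi$ and the cardinality count.

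For the normalization, fix for each $A \in \hat{G}^F$ some isomorphism $\phi_0 \colon F^*A \xrightarrow{\sim} A$; since $A$ is a simple perverse sheaf, any other such isomorphism has the form $c\phi_0$ with $c \in \overline{\Q}_\ell^\times$, and the associated characteristic function scales by~$c$. By \cite[17.8.3]{L2}, made unconditional via \cite{L10}, one has $\langle \chi_A, \chi_{A'}\rangle = 0$ for $A \not\cong A'$, while $\langle \chi_A,\chi_A\rangle$ is a positive real algebraic number otherwise. Rescaling $\phi_0$ by the appropriate positive real scalar (via the embedding fixed in~\ref{abs11}) achieves $\langle \chi_A,\chi_A\rangle = 1$. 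For the cyclotomic property of the values, the parity condition together with Deligne's purity forces the eigenvalues of the rescaled $\phi$ on each stalk $\cH_g^i(A)$ to be of the form $\omega \cdot q^{i/2}$ with $\omega$ a root of unity; on a fixed choice of $q^{1/2}$ and of the embedding in~\ref{abs11}, each value $\chi_A(g)$ then lies in a cyclotomic extension of~$\Q$.

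It remains to see that $\bB_1$ is in fact a basis. Orthonormality immediately yields linear independence, so it suffices to show that $|\hat{G}^F|$ equals the number of conjugacy classes of~$G^F$, i.e.\ $\dim_\C \operatorname{CF}(G^F)$. This follows from Lusztig's classification of character sheaves via $F$-stable Levi subgroups and cuspidal pairs together with the induction/restriction formalism of \cite[17.8]{L2}; equivalently, the change-of-basis matrix between $\bB_1$ and the almost-character basis $\bA_1$ of~\ref{abs62} is explicitly known and invertible by \cite[\S 25]{L2} (in its unconditional form granted by \cite{L10}), which upgrades \cite[Theorem~23.1]{L2}. Either route delivers $|\bB_1| = \dim_\C \operatorname{CF}(G^F)$ and completes the argument.
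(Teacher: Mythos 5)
Your proposal is correct and takes the same approach as the paper, which states the theorem as a direct consequence of \cite[\S 17.8]{L2} and \cite[3.10]{L10} without supplying a proof of its own. Two small points to tighten: Lusztig fixes $\phi$ directly by the eigenvalue/purity condition (from which $\langle\chi_A,\chi_A\rangle=1$ then follows automatically), whereas your order---rescale by a positive real for norm one and then appeal to parity---leaves the phase of the scalar undetermined; and the invertible base change to almost characters is Shoji's Theorem~\ref{thm3} (with $Z(G)$ connected), not \cite[\S 25]{L2}, so for the cardinality count it is cleaner to use the classification of $\hat{G}^F$ or the spanning statement in Corollary~\ref{thm4a}.
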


\begin{rema} \label{abs64} Consider the bases $\bA_1$ (almost characters)
and $\bB_1$ (character sheaves) of $\mbox{CF}(G^F)$. Lusztig 
\cite[13.7]{L1} (see also p.~226 in part II and p.~103 in part V of 
\cite{L2}) conjectured that the matrix which expresses the base change 
between $\bA_1$ and $\bB_1$ should be diagonal. For ``cuspidal'' objects in
$\bB_1$ this is proved in \cite[Theorem~0.8]{L7}, assuming that $p,q$ are 
sufficiently large. If $Z(G)$ is connected, then this conjecture was 
proved by Shoji \cite{S2}, \cite{S3}, under some mild conditions on~$p$. 
These conditions can now be removed since the properties in 
\cite[\S 17.8]{L2} mentioned in \ref{abs63} are known to hold in complete 
generality. Thus, we can state:
\end{rema}

\begin{thm}[Shoji \protect{\cite{S2}, \cite{S3}}] \label{thm3} Assume 
that $Z(G)$ is connected. Then the matrix which expresses the base change 
between the bases $\bA_1$ (see \ref{abs62}) and $\bB_1$ (see \ref{abs63}) 
is diagonal. Thus, there is a bijection $\bA_1\leftrightarrow \bB_1$ which 
is also explicitly determined in terms of the parametrizations of $\bA_1$ 
(see \cite[4.23]{L1}) and $\bB_1$ (see \cite[23.1]{L2}).
\end{thm}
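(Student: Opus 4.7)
The plan is to follow Shoji's strategy from \cite{S2}, \cite{S3}, now liberated from hypotheses on $p$ thanks to Lusztig \cite{L10}. The first step is to observe that both bases admit parallel parametrizations. On the side of $\bA_1$, Lusztig's Main Theorem 4.23 of \cite{L1} groups $\Irr(G^F)$ (and hence the almost characters) into ``families'' indexed by pairs $(s,\mathcal{F})$, where $s$ runs over $F^*$-stable semisimple classes in the dual group $G^*$ and $\mathcal{F}$ is a family of unipotent characters of $C_{G^*}(s)^{F^*}$; within each family the almost characters are obtained from the irreducible characters via Lusztig's non-abelian Fourier matrices. On the side of $\bB_1$, the multiplicity formula \cite[17.8.3]{L2} together with the cleanness and parity conditions \cite[17.8.4]{L2} (all now unconditional by \cite{L10}) groups $\hat{G}^F$ into blocks indexed by $G^F$-equivalence classes of triples $(L,\Sigma,\mathcal{E})$ consisting of an $F$-stable Levi $L$, an $F$-stable subset $\Sigma\subseteq L$ and a cuspidal local system $\mathcal{E}$ on $\Sigma$; within each block, the character sheaves are parametrized by irreducible characters of a certain relative Weyl group.

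Next I would match the two parametrizations. The key point is that both ``induction from a cuspidal datum on a Levi'' on the side of almost characters (via Deligne--Lusztig induction) and on the side of character sheaves (via Lusztig's induction functor for perverse sheaves) carry the same combinatorial structure: a ``block'' on the character sheaf side corresponds to a family on the almost character side under the parametrization of \cite[4.23]{L1}. Consequently the base change matrix between $\bA_1$ and $\bB_1$ is block-diagonal with respect to this common decomposition. Within a single block, the character formula \cite[8.5]{L2} expresses each $\chi_A$ in terms of generalized Green functions on the relevant Levi, and the analogous formula for almost characters (derived from the character formula for $R_w^\theta$ in Remark~\ref{green} and Lusztig's definition of almost characters) involves the same generalized Green functions with the same Fourier-matrix coefficients. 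After restriction to the unipotent support the two expressions are identified term by term, reducing the entire statement to the cuspidal case.

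The main obstacle is the cuspidal case: for a cuspidal character sheaf $A$ on $G$ with $F^*A\cong A$, one must show that the isomorphism $\phi\colon F^*A\xrightarrow{\sim}A$ can be normalized so that $\chi_A$ coincides with a cuspidal almost character (up to a root of unity which is then absorbed into the orthonormal choice of $\bA_1$). This is precisely \cite[Theorem 0.8]{L7}, originally established under the assumption that $p$ and $q$ are sufficiently large, and extended by Shoji \cite{S2}, \cite{S3} to arbitrary $p,q$ \emph{conditional} on the properties of character sheaves listed in \cite[\S 17.8]{L2}. The crucial new input from \cite{L10} is that those properties now hold unconditionally, so Shoji's argument goes through without any restriction on $p$, and combined with Theorem~\ref{thm2} this pins down the scalar normalizations so that the base change matrix is not merely block-diagonal but in fact diagonal, yielding the claimed bijection $\bA_1\leftrightarrow\bB_1$.
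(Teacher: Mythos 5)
Your proposal is correct and follows essentially the same route as the paper, which does not give a self-contained proof but rather cites Shoji \cite{S2}, \cite{S3} and observes that the hypotheses on $p$ in Shoji's argument can now be dropped because the properties of \cite[\S 17.8]{L2} (multiplicity formula, cleanness, parity) hold unconditionally by \cite{L10}. Your sketch of Shoji's strategy (block-diagonality from matching the two parametrizations, then reduction to the cuspidal case, with \cite[Theorem 0.8]{L7} as the model) is a reasonable expansion of what the paper leaves implicit, and the decisive new input you identify~---~that \cite{L10} makes Shoji's argument unconditional~---~is exactly the point the paper makes.
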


\begin{rem} \label{cusp} There is a notion of ``cuspidal'' perverse
sheaves in $\mathcal{D}G$; see \cite[3.10, 7.1]{L2}. If $A \in \hat{G}^F$ is
cuspidal, then the ``cleanness condition'' \cite[17.8.4]{L2} implies that 
$\chi_A$ has the following property. The support $\{g \in G^F\mid \chi_A(g)
\neq 0\}$ is contained in $\Sigma^F$ where $\Sigma\subseteq G$ is an 
$F$-stable subset which is the inverse image of a single conjugacy class
in $G/Z(G)^\circ$ under the natural map $G \rightarrow G/Z(G)^\circ$. In 
particular, if $G$ is semisimple, then $\Sigma$ is just a conjugacy class 
in $G$. Explicit information about the sets $\Sigma$ can be extracted, for
example, from the proofs in \cite[\S 19--\S 21]{L2}.

Note that, in general, a class function on a finite group which has non-zero
values on very few conjugacy classes only, is typically a linear combination 
of many irreducible characters of the group. In contrast, it is actually 
quite impressive to see how the characteristic functions of $F$-stable 
cuspidal character sheaves are expressed as linear combinations of very
few irreducible characters of $G^F$. 
\end{rem}

\begin{exmp} \label{expcusp} (a) Let $G^F=\SL_2(\F_q)$ where $q$ is odd, 
as in Example~\ref{expsl2}. By \cite[\S 18]{L2}, there are two cuspidal 
character sheaves in $\hat{G}^F$. Their characteristic functions are equal 
(up to multiplication by a scalar of absolute value~$1$) to the two class 
functions in Remark~\ref{expsl2b}. The corresponding sets $\Sigma$ are 
the $G$-conjugacy classes of the elements $J$ and $-J$, respectively. 

(b) Let $G^F=\mbox{Sp}_4(\F_q)$ where $q$ is odd, as in 
Example~\ref{disexp1}. In Srinivasan's work \cite{Sr68}, apart from 
constructing characters by induction from various subgroups,
an extra function $\Gamma_1$ was constructed in \cite[(7.3)]{Sr68} by 
ad hoc methods in order to complete the character table. This function 
$\Gamma_1$ takes the values $q,-q,-q,q$ on the classes denoted $D_{31}$, 
$D_{32}$, $D_{33}$, $D_{34}$, respectively, and vanishes on all other 
classes of $G^F$. We have 
\begin{center}
$\Gamma_1=\frac{1}{2}(\theta_9+\theta_{10}-\theta_{11}-\theta_{12})\quad$
where $\quad\theta_9,\theta_{10},\theta_{11},\theta_{12}\in\fU(G^F)$.
\end{center}
On the other hand, by (a') in the proof of \cite[19.3]{L2}, there is a
unique cuspidal character sheaf $A_0\in\hat{G}^F$. In \cite[p.~192]{Sr94}, 
it is pointed out that $\Gamma_1$ is a characteristic function associated 
with~$A_0$.

(c) Let $G$ be simple of adjoint type $D_4$ and $F$ be such that 
$G^F={^3\!D}_4(\F_q)$, where $q$ is odd. By part (d) in the proof of 
\cite[19.3]{L2}, there are four cuspidal character sheaves, but only one 
of them is isomorphic to its inverse image under~$F$ (this is seen by an 
argument similar to that in the proof of \cite[20.4]{L2}); 
let us denote the latter one by $A_0$. The supporting set $\Sigma$ is the 
conjugacy class of an element $g=su=us\in G^F$ where $s$ has order $2$ and 
$u$ is regular unipotent in $C_G(s)^\circ$. The set $\Sigma^F$ splits into 
two classes in $G^F$, with representatives $su',su''$ as described in 
\cite[0.8]{Spa}. The characteristic function $\chi_{A_0}$ can be normalized
such that it takes values $q^2$ and $-q^2$ on $su'$ and $su''$, respectively
(and vanishes on all other classes of $G^F$). Using Spaltenstein's table 
\cite{Spa} of the unipotent characters of $G^F$, we see that (cf.\ 
\cite[p.~681]{Spa}):
\begin{center}
$\chi_{A_0}=\frac{1}{2}([\rho_1]-[\rho_2]+{^3\!D}_4[1]-{^3\!D}_4[-1])$.
\end{center}

(d) Further examples for $G$ of exceptional type are given by 
Kawanaka \cite[\S 4.2]{Kaw1}. These examples re-appear in the more general
framework of Lusztig \cite[\S 7]{Lu6}.
\end{exmp}

\begin{rema} \label{indcs} Finally, there is an inductive description of 
$\mbox{CF}(G^F)$ which highlights the relevance of the characteristic
functions of cuspidal characters sheaves. First, some definitions. As in 
\cite[7.2]{L0a}, a closed subgroup $L\subseteq G$ is called a 
\textit{regular subgroup} if $L$ is $F$-stable and there exists a 
parabolic subgroup $P\subseteq G$ (not necessarily $F$-stable) such that 
$L$ is a Levi subgroup of $P$, that is, $P=U_P\rtimes L$ where $U_P$ is 
the unipotent radical of $P$. By generalizing the construction of the
virtual characters $R_{T,\theta}$, Lusztig \cite{L0} (see also 
\cite[1.7]{L9}) defines a ``twisted'' induction 
\[ R_{L\subseteq P}^G\colon \mbox{CF}(L^F)\rightarrow \mbox{CF}(G^F),\]
which sends virtual characters of $L^F$ to virtual characters of $G^F$.
(A model of $R_{L\subseteq P}^G$ analogous to the model $R_w^\theta$ in 
Section~\ref{secdl} is described in \cite[6.21]{L2}.)

On the other hand, let $A_0\in \hat{L}^F$ be cuspidal and $\phi \colon 
F^*A_0 \stackrel{\sim}{\rightarrow} A_0$ be an isomorphism. To $(L,A_0,
\phi)$ one can associate a pair $(K,\tau)$ where $K$ is an object in 
$\mathcal{D}G$ (obtained by a geometric induction process from $A_0$) 
and $\tau\colon F^*K \stackrel{\sim}{\rightarrow} K$ is an isomorphism; 
see \cite[8.1]{L2}, \cite[1.8]{L9}. We have corresponding characteristic
functions $\chi_{A_0}\in\mbox{CF}(L^F)$ and $\chi_{K} \in \mbox{CF}(G^F)$. 
\end{rema}

\begin{thm}[Lusztig \protect{\cite[8.13, 9.2]{L9} $+$ Shoji \cite[\S 4]{S5}}] 
\label{thm4} Assume that $Z(G)$ is connected. Then, with the above notation,
the map $R_{L\subseteq P}^G\colon \operatorname{CF}(L^F)\rightarrow 
\operatorname{CF}(G^F)$ does not depend on $P$ and, hence, can be denoted
by $R_L^G$. If $(K,\tau)$ is associated with $(L,A_0,\phi)$ as above (where 
$A_0$ is cuspidal), then $\chi_K=(-1)^{\dim \Sigma} R_{L}^G(\chi_{A_0})$.
\end{thm}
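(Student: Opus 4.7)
The plan is to prove the formula $\chi_K=(-1)^{\dim\Sigma}R_{L\subseteq P}^G(\chi_{A_0})$ first for a fixed choice of $P$, and then to deduce $P$-independence of twisted induction as a consequence of the fact that the isomorphism class of $K$ in $\mathcal{D}G$ does not depend on~$P$. Concretely, I would compute $\chi_K(g)$ for $g\in G^F$ directly from the geometric construction of $K$ recalled in~\ref{indcs}. Unravelling that construction, $K$ arises as a suitably shifted proper pushforward of $\pi_1^*A_0$ along the chain of projections from the incidence variety $\{(g',xP)\in G\times G/P : x^{-1}g'x\in U_PL\}$, first descending by the free $L$-action and then projecting to~$G$; the isomorphism $\tau\colon F^*K\stackrel{\sim}{\to}K$ is built canonically from~$\phi$. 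Applying the Grothendieck--Lefschetz trace formula at~$g$ and grouping points of the fibre over~$g$ by their $L^F$-orbits yields, up to the overall perverse shift, a sum over $xP\in(G/P)^F$ with $x^{-1}gx\in U_P^FL^F$ of $\chi_{A_0}$ evaluated at the image of $x^{-1}gx$ under the Levi projection $U_PL\to L$, normalised by~$|L^F|$. This is precisely the character-theoretic description of $R_{L\subseteq P}^G(\chi_{A_0})(g)$ (cf.\ Digne--Michel), and the sign $(-1)^{\dim\Sigma}$ absorbs the perverse shift imposed on the cuspidal input $A_0$, a simple perverse sheaf on~$L$ supported on~$\Sigma$.

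Next, I would invoke Lusztig's theorem \cite[8.13]{L9} that the isomorphism class of $K$ in $\mathcal{D}G$ does not depend on the choice of parabolic $P$ having $L$ as a Levi, with $\tau$ transported compatibly. Combined with the previous step this yields $P$-independence of $R_{L\subseteq P}^G(\chi_{A_0})$ for every cuspidal $(A_0,\phi)$. To promote this to all of $\mbox{CF}(L^F)$, Theorem~\ref{thm2} applied to $L^F$ ensures that the characteristic functions of $F$-stable character sheaves on $L$ span $\mbox{CF}(L^F)$. Every such character sheaf occurs, by Lusztig's classification in~\cite{L2}, as a perverse constituent of a complex induced from a cuspidal character sheaf on a proper Levi subgroup of~$L$; combining this with the transitivity of the geometric induction diagrams, which matches transitivity of $R_{L\subseteq P}^G$ under induction-in-stages, reduces the general case to the cuspidal one. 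Carrying out these compatibilities at the level of $F$-stable objects and their characteristic functions is the content of Shoji~\cite[\S 4]{S5}.

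The main obstacle is the $P$-independence of the complex $K$ itself, used decisively in the second step. Its proof rests on the cleanness and parity properties of cuspidal character sheaves \cite[17.8.4]{L2}, available in full generality only after~\cite{L10}, together with the diagonal base-change Theorem~\ref{thm3} between the almost-character basis and the character-sheaf basis, which requires the assumption that $Z(G)$ be connected. By contrast, the Lefschetz computation is essentially bookkeeping once the construction of $K$ has been pinned down, and the spanning argument reducing to the cuspidal case is a routine application of induction-in-stages. Dropping the connected-centre hypothesis would require the regular-embedding formalism of Section~\ref{secj1}, which is beyond the scope of the statement recorded here.
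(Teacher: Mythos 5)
The paper does not actually prove Theorem~\ref{thm4}: it records the statement with a citation to Lusztig \cite[8.13, 9.2]{L9} and Shoji \cite[\S 4]{S5}, and the paragraph following the theorem explains how the restrictions on $p$ and $q$ present in those references are removed (using the unconditional cleanness result \cite{L10} for the condition on $p$, and Shoji's argument for the condition on $q$ under the hypothesis $Z(G)$ connected). So there is no internal proof to compare against, and your attempt to sketch one must be assessed on its own.

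The central gap is in the step you call ``essentially bookkeeping.'' The formula you write for $R_{L\subseteq P}^G(\chi_{A_0})(g)$ --- a sum over $xP\in(G/P)^F$ with $x^{-1}gx\in U_P^F L^F$, evaluating $\chi_{A_0}$ at the Levi component --- is the Harish-Chandra induction formula, which is correct only when $P$ is $F$-stable. The entire point of twisted induction $R_{L\subseteq P}^G$ is that $P$ need not be $F$-stable (the hypothesis is only that $L$ is $F$-stable); in that situation the operation is defined via $\ell$-adic cohomology of a Deligne--Lusztig-type variety, not a point count, and the asserted equality $\chi_K=(-1)^{\dim\Sigma}R_{L\subseteq P}^G(\chi_{A_0})$ is a genuinely deep comparison theorem. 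It is precisely the content of \cite[8.13, 9.2]{L9}, whose proof proceeds by showing that both sides satisfy the character formula \cite[8.5]{L2} in terms of generalized Green functions and then matching coefficients; this is not a Grothendieck--Lefschetz calculation that falls out of the construction of $K$. Related to this, you also misattribute the logical dependencies: the $P$-independence of the complex $K$ itself is a geometric fact from \cite{L2} (requiring cleanness, hence \cite{L10} in full generality, but not the diagonal base-change Theorem~\ref{thm3}); Theorem~\ref{thm3} and the connected-centre hypothesis enter through Shoji's removal of the ``$q$ sufficiently large'' restriction, which is a separate part of the argument. Your overall architecture --- establish the cuspidal case, then span $\operatorname{CF}(L^F)$ by characteristic functions of character sheaves and use transitivity of induction --- is plausible in outline and close in spirit to what is done in the literature, but presenting the hard analytic core as routine is where the proposal fails.
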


In \cite[8.13, 9.2]{L9}, these statements are proved under a mild
condition on $p$ and for $q$ a sufficiently large power of~$p$. Again, 
the condition on $p$ can now be removed since the properties in 
\cite[\S 17.8]{L2} mentioned in \ref{abs63} (especially, the ``cleanness 
condition'') are known to hold in complete generality \cite{L10}; this even 
works when $Z(G)$ is not connected. Shoji \cite[\S 4]{S5} showed that one 
can also remove the assumption on $q$, when $Z(G)$ is connected. Then we
also have:

\begin{cor} \label{thm4a} Assume that $Z(G)$ is connected. Then
\[ \operatorname{CF}(G^F)=\big\langle R_L^G(\chi_{A_0})\mid L \subseteq G
\mbox{ regular and } A_0 \in \hat{L}^F \mbox{ cuspidal }\big\rangle_\C.\]
\end{cor}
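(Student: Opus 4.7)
The plan is to combine Theorem~\ref{thm2} with Theorem~\ref{thm4} and Lusztig's inductive description of character sheaves \cite[17.8.3]{L2}. By Theorem~\ref{thm2}, the set $\bB_1=\{\chi_A\mid A\in\hat G^F\}$ is a $\C$-basis of $\mbox{CF}(G^F)$, so it suffices to show that each $\chi_A$ lies in the $\C$-span of the functions $R_L^G(\chi_{A_0})$. By Theorem~\ref{thm4}, that span coincides with the $\C$-span of the characteristic functions $\chi_K$ of the induced complexes $K=\operatorname{ind}_L^G A_0$ attached to $F$-stable regular cuspidal pairs $(L,A_0)$.

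The second step is to invoke Lusztig's multiplicity formula \cite[17.8.3]{L2}, now valid without any restriction on $p$ thanks to \cite{L10}: the character sheaves on $G$ are exactly the simple perverse constituents of the various complexes $\operatorname{ind}_L^G A_0$, and for a fixed cuspidal pair $(L,A_0)$ the constituents are canonically indexed $E\mapsto A_E$ by the irreducible representations of the relative Weyl group $W(L,A_0):=N_G(L,A_0)/L$. Given $A\in\hat G^F$, I would select a cuspidal pair $(L_0,A_0)$ whose $G$-conjugacy class is $F$-stable and with $A\cong A_E$ for some $\gamma$-stable $E\in\Irr(W(L_0,A_0))$, where $\gamma$ denotes the automorphism of $W(L_0,A_0)$ induced by $F$. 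By Lang's theorem, the $G^F$-conjugacy classes of $F$-stable pairs in the geometric class of $(L_0,A_0)$ are parametrized by the $\gamma$-conjugacy classes $[w]$ in $W(L_0,A_0)$, yielding representatives $(L_w,A_{0,w})$ and induced complexes $K_w$.

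For each $w$, a natural normalization of the isomorphism $F^*K_w\xrightarrow{\sim}K_w$ produces, up to an overall sign, an identity of the form
\begin{equation*}
\chi_{K_w}\;=\;\sum_{E\in\Irr(W(L_0,A_0))^{\gamma}}\tilde\chi_E(w)\,\chi_{A_E},
\end{equation*}
where $\tilde\chi_E(w)$ is the $\gamma$-twisted character value of $E$ at $w$. The matrix $\bigl(\tilde\chi_E(w)\bigr)_{E,\,[w]}$ is the $\gamma$-twisted character table of $W(L_0,A_0)$ and is therefore invertible over~$\C$. Inverting this linear system expresses each $\chi_{A_E}$, and in particular our given $\chi_A$, as a $\C$-linear combination of the $\chi_{K_w}=\pm R_{L_w}^G(\chi_{A_{0,w}})$, which all lie in the span described by the corollary.

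The main obstacle will be to justify the explicit formula for $\chi_{K_w}$ above: this amounts to controlling how $F^*$ acts on the multiplicity spaces $\Hom(A_E,K_w)$ as $w$ varies over $\gamma$-conjugacy classes, and is the technical heart of Shoji's proof of Theorem~\ref{thm3} in \cite{S2}, \cite{S3}. Once that bookkeeping is in place, which is now unconditional thanks to \cite{L10}, the spanning statement follows as outlined; the hypothesis that $Z(G)$ be connected is used only implicitly, via its role in Theorems~\ref{thm3} and~\ref{thm4}.
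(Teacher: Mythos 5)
Your argument is essentially a re-derivation of the cited result, and as such it is on target, but the paper's own proof is much more economical: it simply observes that, because the properties in \cite[\S 17.8]{L2} now hold unconditionally by \cite{L10}, the set $\hat{G}$ coincides with the set of admissible complexes of \cite[7.1.10]{L2}, at which point the spanning statement is exactly the content of \cite[\S 10.4]{L2}, with Theorem~\ref{thm4} providing the translation from $\chi_K$ to $R_L^G(\chi_{A_0})$. Your outline (describe every $A\in\hat{G}^F$ as $A_E$ for a cuspidal datum $(L_0,A_0)$ via \cite[17.8.3]{L2}; parametrize the $F$-stable rational forms $(L_w,A_{0,w})$ by $\gamma$-conjugacy classes in $W(L_0,A_0)$ using Lang's theorem; relate $\chi_{K_w}$ to $\sum_E\tilde\chi_E(w)\chi_{A_E}$; invert the $\gamma$-twisted character table) is precisely the chain of ideas that underlies \cite[\S 10.4]{L2}, so the two routes are fundamentally the same.

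Two attributions should be corrected. The ``bookkeeping'' you flag --- the compatible normalization of the isomorphisms $\tau_w\colon F^*K_w\to K_w$ so that the multiplicities are genuine $\gamma$-twisted character values --- is carried out by Lusztig himself in \cite[\S 10.4--10.6]{L2}; it is \emph{not} the content of Shoji's papers \cite{S2}, \cite{S3}. Those papers, and Theorem~\ref{thm3}, concern the entirely separate problem of comparing the basis $\bB_1$ of characteristic functions with the basis $\bA_1$ of almost characters, and Theorem~\ref{thm3} plays no role in the proof of this corollary. Consequently, when you say at the end that $Z(G)$ connected enters ``via its role in Theorems~\ref{thm3} and~\ref{thm4}'', only the reference to Theorem~\ref{thm4} is relevant: the connectedness of $Z(G)$ is what allows one to invoke Shoji's result \cite[\S 4]{S5} removing the ``$q$ large'' hypothesis in \cite{L9}, so that the map $R_L^G$ is well-defined and $\chi_K=(-1)^{\dim\Sigma}R_L^G(\chi_{A_0})$ holds for all $q$. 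With these attributions repaired, the proposal is a correct and rather more explicit account of the mechanism behind the paper's one-line citation.
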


\begin{proof} Since the properties in \cite[\S 17.8]{L2} hold in
complete generality, the set $\hat{G}$ coincides with the set 
of ``admissible complexes'' on $G$  in \cite[7.1.10]{L2}. Then the
above statement is contained in \cite[\S 10.4]{L2}; see also the
discussion in \cite[4.2]{S5a}. 
\end{proof}

\begin{rem} \label{fincs} In this picture, the general strategy for computing 
the character values of $\rho \in \Irr(G^F)$ is as follows
(see \cite[\S 4]{S5a} for further details).

If $A\in \hat{G}^F$, then $\chi_A$ can be written as an explicit
linear combination of induced class functions $R_L^G(\chi_{A_0})$ as in 
Corollary~\ref{thm4a} (see \cite[\S 10.4]{L2}). Now the values of 
$R_L^G(\chi_{A_0})$ can be determined by the character formula 
\cite[8.5]{L2}, which involves certain ``generalized Green functions''. 
The latter are computable by an algorithm which is described in 
\cite[\S 24]{L2}; see also Shoji \cite{S1}, \cite[\S 4.3]{S5a}. Thus, the 
values of the basis elements in $\bB_1$ can, at least in principle, be 
computed. Then one uses the transition from $\bB_1$ to $\bA_1$ in 
Theorem~\ref{thm3} and, finally, the transition from $\bA_1$ to $\bA_0=
\Irr(G^F)$ in \ref{abs62}. The remaining issue in this program is the
determination of the scalars in the diagonal base change in 
Theorem~\ref{thm3}. This problem appears to be very hard; it is not yet 
completely solved. But for many applications, one can already draw strong 
conclusions about character values without knowing these scalars 
precisely (an example will be given below).

The first successful realization of this whole program was carried out by
Lusztig \cite{L3}, where character values on unipotent elements are 
determined. See also Bonnaf\'e \cite{Bo3}, Shoji \cite{S5}, \cite{S5a}, 
\cite{S7}, Waldspurger \cite{wald} and the further references there.
\end{rem}

To close this section, we briefly mention an application of the above 
results to a concrete problem on the modular representation theory of $G^F$
in the non-defining characteristic case (see \cite{lupa} for general 
background on modular representations).

\begin{rema} \label{ibr} Let $\ell\neq p$ be a prime and $\mbox{CF}_{\ell'}
(G^F)$ be the space of all class functions $G^F_{\ell'} \rightarrow \C$,
where $G^F_{\ell'}$ denotes the set of elements $g \in G^F$ whose order 
is not divisible by~$\ell$. For any $f \in \mbox{CF}(G^F)$, we denote by 
$\breve{f}$ the restriction of $f$ to~$G^F_{\ell'}$. Let $\IBr_\ell(G^F) 
\subseteq \mbox{CF}_{\ell'}(G^F)$ be the set of irreducible Brauer
characters; these form a basis of $\mbox{CF}_{\ell'}(G^F)$. (The set
$\IBr_\ell(G^F)$ is in bijection with the isomorphism classes of 
irreducible representations of $G^F$ over an algebraically closed field
of characteristic~$\ell$.) For $\rho \in \Irr(G^F)$, we have
\[ \breve{\rho}=\sum_{\beta \in \IBr_{\ell}(G^F)} d_{\rho\beta} \, \beta
\qquad \mbox{where}\qquad  d_{\rho\beta}\in \Z_{\geq 0}.\]
The matrix $(d_{\rho\beta})_{\rho,\beta}$ is Brauer's ``$\ell$-modular 
decomposition matrix'' of $G^F$. We say that $\beta \in \IBr_\ell(G^F)$ is 
unipotent  if $d_{\rho\beta}\neq 0$ for some $\rho \in \fU(G^F)$. (See
also the general discussion in \cite[\S 3]{gehi2} for further comments 
and references.)
\end{rema}

\begin{table}[htbp] \caption{Number of unipotent irreducible $\ell$-modular 
Brauer characters} \label{tab1}
\begin{center}
$\begin{array}{|c|cccc|} \hline   
\;\;\mbox{Type}\;\;& \;\;\ell=2\;\; & \;\;\ell=3\;\; & \;\;\ell=5\;\;  
& \mbox{$\ell$ good} \\  \hline
\mbox{$G_2$}             &    9   &   8    &      & 10   \\ \hline 
%\mbox{$^3\!D_4$}         &    7   &        &      &  8   \\ \hline 
\mbox{$F_4$}             &   28   &  35    &      & 37   \\ \hline         
\mbox{$E_6$}, \mbox{$^2\!E_6$}&   27   &  28    &      & 30   \\ \hline
\mbox{$E_7$}             &   64   &  72    &      & 76   \\ \hline             
\mbox{$E_8$}             &   131  &  150   &  162 & 166  \\ \hline
\multicolumn{5}{c}{\text{(No entry means: same number as
for $\ell$ good)}}
\end{array}$
\end{center}
\end{table}

Assume now that $Z(G)$ is connected. In \cite[\S 6]{gehi2}, we found 
the number of unipotent $\beta\in \IBr_\ell(G^F)$ for $G$ of exceptional
type and ``bad'' primes $\ell$, under some mild restrictions on~$p$. 
(The table in \cite{gehi2} contained an error which was corrected in  
\cite[\S 4.1]{dghm}.) For ``good'' $\ell$ , those numbers were already 
known by \cite{bs1} to be equal to $|\fU(G^F)|$. For $G$ of classical 
type, see \cite[6.6]{gehi2} and the references there.

\begin{prop}[Cf. \protect{\cite[\S 6]{gehi2}}] \label{brauer} Assume that
$Z(G)$ is connected and $G/Z(G)$ is simple of type $G_2$, $F_4$, $E_6$, 
$E_7$ or $E_8$. If $\ell\neq p$, then the number of
unipotent $\beta\in \IBr_\ell(G^F)$ is given by Table~\ref{tab1}. 
Furthermore, we have the equality
\[ \langle \beta \mid  \beta \in \IBr_\ell(G^F) \mbox{ unipotent}\,
\rangle_\C=\langle \breve{\rho} \mid \rho \in \fU(G^F)\rangle_\C \quad
\subseteq \operatorname{CF}_{\ell'}(G^F).\]
\end{prop}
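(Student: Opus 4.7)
The plan is to follow the strategy of \cite[\S 6]{gehi2} and observe that the mild hypotheses on~$p$ imposed there were needed solely so that the relevant character sheaf results would be available; thanks to Lusztig \cite{L10} those hypotheses can now be removed. First I would establish the equality of spans, which is essentially a tautology from the definition in Remark~\ref{ibr}. If $\beta\in\IBr_\ell(G^F)$ is unipotent, then $d_{\rho\beta}\neq 0$ for some $\rho\in\fU(G^F)$, so $\beta$ occurs in $\breve{\rho}$; conversely, for any $\rho\in\fU(G^F)$ the decomposition $\breve{\rho}=\sum_\beta d_{\rho\beta}\beta$ only involves Brauer characters $\beta$ that are themselves unipotent, by definition. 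Hence both spans coincide, and in particular the number of unipotent Brauer characters equals $\dim_\C \langle \breve{\rho}\mid \rho\in \fU(G^F)\rangle_\C$.

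The second step is to reinterpret this dimension via character sheaves. The subspace $\langle \rho\mid \rho\in \fU(G^F)\rangle_\C\subseteq \operatorname{CF}(G^F)$ has as a basis the ``unipotent almost characters'' (the subset of $\bA_1$ labelled by $\fX(W,\sigma)$, cf.\ Corollary~\ref{cor1}). By Theorem~\ref{thm3}, the base change from $\bA_1$ to $\bB_1$ is diagonal, so this subspace coincides with $\langle \chi_A \mid A\in \hat{G}^F\text{ unipotent}\rangle_\C$. Restricting to $G^F_{\ell'}$ gives
\[
\dim \langle \breve{\rho}\mid \rho\in \fU(G^F)\rangle_\C = \dim \langle \breve{\chi}_A \mid A\in \hat{G}^F\text{ unipotent}\rangle_\C.
\]
To compute the right-hand side one uses Theorem~\ref{thm4} and Corollary~\ref{thm4a} to expand each $\chi_A$ as a linear combination of twisted inductions $R_L^G(\chi_{A_0})$ of characteristic functions of cuspidal character sheaves on regular Levi subgroups $L\subseteq G$. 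By the cleanness property (Remark~\ref{cusp}), each $\chi_{A_0}$ is supported on $\Sigma_L^F$, where $\Sigma_L\subseteq L$ is the preimage of a single conjugacy class of $L/Z(L)^\circ$. Using the Jordan decomposition of elements, this support information allows one to identify exactly which $\breve{\chi}_A$ vanish and, more generally, the linear dependencies among them on $G^F_{\ell'}$.

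The final step is the case-by-case count for each exceptional type and each bad prime $\ell$. This is carried out in \cite[\S 6]{gehi2}, with the numerical correction of \cite[\S 4.1]{dghm}, and yields Table~\ref{tab1}. The main obstacle, and the reason a separate statement is warranted here, is that the argument of \cite{gehi2} relied on the diagonal base change of Theorem~\ref{thm3}, on the cleanness condition of Remark~\ref{cusp}, and on the generalized Green function algorithm of \cite[\S 24]{L2}, each of which previously required mild restrictions on~$p$. Lusztig's paper \cite{L10} establishes the full list of properties in \cite[\S 17.8]{L2} unconditionally, hence the computations of \cite[\S 6]{gehi2} extend verbatim to all primes $p\neq \ell$, which completes the proof.
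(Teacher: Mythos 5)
Your first step contains a genuine gap: the equality of the two spans is not a tautology. The definition in Remark~\ref{ibr} gives you one inclusion immediately, namely $\langle \breve{\rho}\mid \rho\in\fU(G^F)\rangle_\C\subseteq \langle \beta\mid\beta\text{ unipotent}\rangle_\C$, since $\breve{\rho}=\sum_\beta d_{\rho\beta}\beta$ involves only unipotent $\beta$. But the reverse inclusion does not follow from the fact that every unipotent $\beta$ occurs as a constituent of some $\breve{\rho}$: that only says $d_{\rho\beta}\neq 0$ for some particular pair $(\rho,\beta)$, and it does not place $\beta$ itself in the $\C$-span of the $\breve{\rho}$. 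Indeed, Table~\ref{tab1} shows that for bad $\ell$ the number of unipotent Brauer characters is strictly smaller than $|\fU(G^F)|$, so the functions $\breve{\rho}$ with $\rho\in\fU(G^F)$ are linearly dependent, and showing that their span nevertheless contains every unipotent Brauer character is precisely the substantive content established in \cite[\S 6]{gehi2} by character-sheaf methods. Your steps~2 and 3 compute (or bound) $\dim\langle\breve{\rho}\rangle_\C$, which only gives $\dim\langle\breve{\rho}\rangle_\C\leq\#\{\text{unipotent }\beta\}$ and cannot give the reverse inequality on its own.

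Your final step, however, is in fact the paper's entire argument: the three assumptions~A, B, C formulated in \cite[5.2]{gehi2}, under which the proof of \cite[\S 6]{gehi2} goes through, now hold without any condition on~$p$, thanks to Lusztig's unconditional cleanness result \cite{L10} and the resulting validity of Theorems~\ref{thm2}, \ref{thm3} and \ref{thm4} as stated. The paper does not re-derive the span equality; it cites \cite{gehi2} (with the correction of \cite[\S 4.1]{dghm}) for the whole proof, and the only new observation is the removal of the hypotheses on~$p$. If you simply drop the claim that step one is a tautology and instead absorb the span equality into the material quoted from \cite{gehi2}, your outline matches the paper's.
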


\begin{proof} If we knew the character tables of the groups in question,
then this would be a matter of a purely mechanical computation. Since those
tables are not known, the argument in \cite[\S 6]{gehi2} uses results on 
character sheaves but it requires three assumptions~A, B, C, as formulated
in \cite[5.2]{gehi2}. At the time of writing \cite{gehi2}, these assumptions 
were known to hold under some mild conditions on~$p$. These conditions can 
now be completely removed thanks to the facts that the ``cleanness 
condition'' holds unconditionally \cite{L10} and that Theorem~\ref{thm2}, 
\ref{thm3}, \ref{thm4} are valid as stated above. Otherwise, the argument 
remains the same as in \cite[\S 6]{gehi2}. 
\end{proof}

%%%%%%%%%%%%%%%%%%%%%%%%%%%%%%%%%%%%%%%%%%%%%%%%%%%%%%%%%%%%%%%%%%%%%%%
\section{Appendix: On uniform functions} \label{secapp}

The main purpose of this appendix is to provide a proof of 
Theorem~\ref{luconj} (Lusztig's conjecture on uniform functions). This 
may also serve as another illustration of the methods that are available 
in order to deal with a concrete problem. Recall from Section~\ref{secdl}
the definition of $R_w^\theta$. It will now be convenient to work with the 
model $R_{T, \theta}$ defined in \cite[\S 7.2]{C2}, \cite[2.2]{Lu2} (cf.\
Remark~\ref{remrt}). We will now further write $R_{T,\theta}$ as 
$R_T^G(\theta)$. Thus, for fixed $T\subseteq G$, we have a map $\theta
\mapsto R_T^G(\theta)$. Extending this linearly to all class functions, 
we obtain a linear map 
\[ R_T^G\colon \mbox{CF}(T^F)\rightarrow \mbox{CF}(G^F).\]
Let $G_{\text{uni}}$ be the set of unipotent elements of $G$. 
Then, as in Remark~\ref{green}, we obtain the Green function $Q_T^G\colon 
G_{\text{uni}}^F\rightarrow\Z$, $u\mapsto R_T^G(\theta)(u)$. 

By adjunction, there is a unique linear map $\Rst_T^G\colon
\mbox{CF}(G^F)\rightarrow \mbox{CF}(T^F)$ such that 
\[\langle R_T^G(f'),f\rangle=\langle f',\Rst_T^G(f)\rangle\qquad
\mbox{for all $f'\in \mbox{CF}(T^F)$ and $f\in \mbox{CF}(G^F)$}.\]
Then we have the following elegant characterisation of uniform functions.

%\begin{prop}[Lusztig \protect{\cite[2.15.2]{Lu2}}] \label{luuu}
%A  class function $f\in\operatorname{CF}(G^F)$ is uniform if and 
%only if 
%\[ \langle f,f\rangle=\sum_{(T,\theta)}\frac{\langle f,R_T^G(\theta)\rangle
%\langle R_T^G(\theta),f\rangle}{\langle R_T^G(\theta),R_T^G(\theta)\rangle}
%\]
%where the sum runs over all $G^F$-conjugacy classes of pairs
%$(T,\theta)$. 
%\end{prop}

\begin{prop}[Digne--Michel \protect{\cite[12.12]{DiMi2}}] \label{app1} 
A  class function $f\in\operatorname{CF}(G^F)$ is uniform if and 
only if 
\[ f=\frac{1}{|G^F|} \sum_{T\in \cT(G)}|T^F|(R_T^G\circ \Rst_T^G)(f),\]
where $\cT(G)$ denotes the set of all $F$-stable maximal tori $T\subseteq G$.
\end{prop}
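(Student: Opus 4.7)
My plan is to show that the linear endomorphism $\pi:=\frac{1}{|G^F|}\sum_{T\in\cT(G)}|T^F|\,R_T^G\circ \Rst_T^G$ of $\operatorname{CF}(G^F)$ is precisely the orthogonal projection onto $\operatorname{CF}_{\mathrm{unif}}(G^F)$; granted this, the stated equivalence is immediate, since $f$ being uniform is the same as $f=\pi(f)$. The approach reduces the claim to three facts: (i) $\operatorname{Im}(\pi)\subseteq \operatorname{CF}_{\mathrm{unif}}(G^F)$, which is clear since every $\pi(f)$ is a sum of functions of the form $R_T^G(\cdot)$; (ii) $\pi$ is self-adjoint, because $R_T^G$ and $\Rst_T^G$ are mutual adjoints by construction; and (iii) $\pi$ restricts to the identity on $\operatorname{CF}_{\mathrm{unif}}(G^F)$. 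Together (i)--(iii) force $\pi$ to annihilate the orthogonal complement: for $f\perp \operatorname{CF}_{\mathrm{unif}}(G^F)$ and arbitrary $g$, one has $\langle \pi(f),g\rangle=\langle f,\pi(g)\rangle=0$ since $\pi(g)\in \operatorname{CF}_{\mathrm{unif}}(G^F)$.

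The core of the argument is (iii). By linearity it suffices to prove $\pi(R_{T_0}^G(\theta_0))=R_{T_0}^G(\theta_0)$ for every $F$-stable maximal torus $T_0\subseteq G$ and every $\theta_0\in\Irr(T_0^F)$. The natural first step is to expand $\Rst_T^G(R_{T_0}^G(\theta_0))=\sum_{\theta\in\Irr(T^F)}\langle R_{T_0}^G(\theta_0),R_T^G(\theta)\rangle\,\theta$, using Frobenius reciprocity and the fact that $\Irr(T^F)$ is an orthonormal basis of $\operatorname{CF}(T^F)$, and to substitute this back into the definition of $\pi$. I would then invoke the scalar product formula that underlies Proposition~\ref{scform} (namely \cite[Theorem~7.3.4]{C2}), which reads $\langle R_{T_0}^G(\theta_0),R_T^G(\theta)\rangle=|T_0^F|^{-1}|X(T_0,\theta_0;T,\theta)|$, where $X(T_0,\theta_0;T,\theta)=\{x\in G^F\mid xT_0x^{-1}=T,\ \theta(xt_0x^{-1})=\theta_0(t_0)\ \forall t_0\in T_0^F\}$. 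Whenever this set is nonempty, $|T^F|=|T_0^F|$ and $R_T^G(\theta)=R_{T_0}^G(\theta_0)$, so the two factors $|T^F|$ and $|T_0^F|$ cancel and the expression for $\pi(R_{T_0}^G(\theta_0))$ collapses to $|G^F|^{-1}$ times a sum of copies of $R_{T_0}^G(\theta_0)$ indexed by triples $(T,\theta,x)$ with $x\in X(T_0,\theta_0;T,\theta)$.

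The main thing to verify carefully is the reindexing that finishes the computation: each $x\in G^F$ must contribute exactly once. This is handled by observing that for each $x\in G^F$ there is a unique pair $(T,\theta)$ realizing $x\in X(T_0,\theta_0;T,\theta)$, namely $T=xT_0x^{-1}$ and $\theta$ the character of $T^F$ defined by $\theta(xt_0x^{-1})=\theta_0(t_0)$; conversely, a given pair $(T,\theta)$ receives contributions from precisely those $x$ witnessing the $G^F$-conjugacy $(T_0,\theta_0)\sim(T,\theta)$. Once this bookkeeping is in place, the triple sum reduces to $\sum_{x\in G^F}R_{T_0}^G(\theta_0)=|G^F|\,R_{T_0}^G(\theta_0)$, yielding $\pi(R_{T_0}^G(\theta_0))=R_{T_0}^G(\theta_0)$ and completing the proof. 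The potential obstacle is purely combinatorial, and the scalar product formula from Proposition~\ref{scform} does all the heavy lifting.
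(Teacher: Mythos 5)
Your proof is correct, and it coincides with the standard argument that Digne--Michel give in~\cite[12.12]{DiMi2}; the paper itself does not reproduce a proof but simply cites that reference. The decomposition into the three facts (image lands in $\operatorname{CF}_{\text{unif}}(G^F)$, the operator is self-adjoint because $\Rst_T^G$ is defined as the adjoint of $R_T^G$, and the operator fixes each $R_{T_0}^G(\theta_0)$) is exactly the standard route, and the reindexing step at the heart of (iii) — that each $x\in G^F$ lies in $X(T_0,\theta_0;T,\theta)$ for a unique pair $(T,\theta)$, namely $T=xT_0x^{-1}$ and the transported character, so that $\sum_{T,\theta}|X(T_0,\theta_0;T,\theta)|=|G^F|$ — is correctly executed, using that non-emptiness of $X(T_0,\theta_0;T,\theta)$ forces both $|T^F|=|T_0^F|$ and $R_T^G(\theta)=R_{T_0}^G(\theta_0)$.
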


\begin{rema} \label{step1} Let us fix a semisimple element $s_0\in G^F$ 
and assume that $H:=C_G(s_0)$ is connected. Then $H$ is $F$-stable,
closed, connected and reductive (see \cite[3.5.4]{C2}). Thus, $H$ itself 
is a connected reductive algebraic group and $F\colon H\rightarrow H$ is 
a Frobenius map. Let $T\subseteq G$ be an $F$-stable maximal torus such 
that $s_0\in T$. Then $T\subseteq H=C_G(s_0)$ and so we can form the 
Green function $Q_T^H\colon H_{\text{uni}}^F \rightarrow\Z$, $u\mapsto 
R_T^H(\theta)(u)$ (where $\theta$ is any irreducible character of $T^F$). 
\end{rema}

\begin{lem}[Cf.\ Lusztig \protect{\cite[25.5]{L2}}] \label{app3} 
Let $g\in G^F$ and write $g=su=us$ where $s\in G^F$ is semisimple, 
$u\in G^F$ is unipotent. Then, in the setting of \ref{step1}, we have
\[ \frac{1}{|T^F|}\sum_{\theta\in\Irr(T^F)} \theta(s_0)^{-1}R_T^G
(\theta)(g)=\left\{\begin{array}{cl} Q_T^H(u) &\mbox{ if $s=s_0$},
\\0&\mbox{ if $s$ is not $G^F$-conjugate to $s_0$}.\end{array}\right.\]
\end{lem}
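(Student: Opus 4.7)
The idea is to start from the Deligne--Lusztig character formula (\cite[7.2.8]{C2}) and then collapse the sum over $\theta$ by orthogonality of characters on the finite abelian group $T^F$. For $g=su=us$, the character formula expresses $R_T^G(\theta)(g)$ as
\[
R_T^G(\theta)(g)=\frac{1}{|C_G^\circ(s)^F|}\sum_{\substack{x\in G^F\\ x^{-1}sx\in T}} Q_{T(x)}^{C_G^\circ(s)}(u)\,\theta(x^{-1}sx),
\]
where $T(x)$ is an $F$-stable maximal torus of $C_G^\circ(s)$ attached to~$x$ (its precise form will not matter in what follows). Note that the condition $x^{-1}sx\in T$ together with $x,s\in G^F$ forces $x^{-1}sx\in T^F$, so the character value $\theta(x^{-1}sx)$ is well-defined.

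Multiplying by $\theta(s_0)^{-1}$, averaging over $\theta\in\Irr(T^F)$, and swapping the two summations, the inner sum
\[
\frac{1}{|T^F|}\sum_{\theta\in\Irr(T^F)}\theta(x^{-1}sx)\,\theta(s_0)^{-1}
\]
vanishes unless $x^{-1}sx=s_0$, in which case it equals~$1$, by standard orthogonality on the finite abelian group $T^F$. Consequently, if $s$ is not $G^F$-conjugate to $s_0$ then no $x\in G^F$ satisfies $x^{-1}sx=s_0$ and the whole expression vanishes, yielding the second case of the lemma. In the case $s=s_0$ the condition $x^{-1}s_0x=s_0$ is equivalent to $x\in C_G(s_0)^F=H^F$ (using that $H$ is connected), and moreover $C_G^\circ(s_0)=H$, so the expression reduces to
\[
\frac{1}{|H^F|}\sum_{x\in H^F}Q_{T(x)}^{H}(u).
\]
Since each $T(x)$ is $H^F$-conjugate to $T$ inside $H$, and Green functions depend only on the $H^F$-conjugacy class of the underlying maximal torus, every term is equal to $Q_T^H(u)$, and the averaged sum collapses to $Q_T^H(u)$ as required.

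The main obstacle I anticipate is not the argument itself but rather the bookkeeping: one must line up the indexing set $\{x\in G^F:x^{-1}sx\in T\}$ and the torus $T(x)$ in the exact statement of \cite[7.2.8]{C2}, and verify that the prefactor $|C_G^\circ(s)^F|^{-1}$ combines cleanly with $|H^F|^{-1}$ in the case $s=s_0$. Once these identifications are made, the argument is essentially a single application of orthogonality of characters on $T^F$, together with the invariance of Green functions under $H^F$-conjugation of tori; this is precisely the degeneration alluded to in Remark~\ref{green} in the trivial case $s_0=1$.
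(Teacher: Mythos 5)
Your proposal is correct and follows essentially the same route as the paper: apply the character formula \cite[7.2.8]{C2}, swap the order of summation, and collapse the sum over $\theta$ by orthogonality on the abelian group $T^F$ (the paper phrases this via the regular representation of $T^F$, which is the same computation), then use $H^F$-conjugation invariance of Green functions for the case $s=s_0$. The one point you flagged as potentially needing care, namely identifying $T(x)$ with $xTx^{-1}$ and matching $|C_G^\circ(s)^F|^{-1}$ with $|H^F|^{-1}$, goes through exactly as you sketched.
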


\begin{proof} Consider the character formula for $R_T^G(\theta)$ in 
\cite[7.2.8]{C2}; we have
\[ R_T^G(\theta)(g)=\frac{1}{|H_s^F|} \sum_x \theta(x^{-1}sx)
Q_{xTx^{-1}}^{H_s}(u)\]
where $H_s:=C_G^\circ(s)$ is connected reductive (see again \cite[3.5.6]{C2})
and the sum runs over all $x\in G^F$ such that $x^{-1}sx\in T$ (and hence, 
$xTx^{-1}\subseteq H_s$). This yields that 
\[ \sum_{\theta\in\Irr(T^F)} \theta(s_0)^{-1}R_T^G(\theta)(g)=
\frac{1}{|H_s^F|} \sum_x \Bigl(\sum_{\theta\in\Irr(T^F)}\theta(s_0^{-1}x^{-1}
sx)\Bigr) Q_{xTx^{-1}}^{H_s}(u).\] 
Now the sum $\sum_{\theta \in\Irr(T^F)} \theta$ is the character of the
regular representation of $T^F$. Hence, for $x$ as above, we have
\[\sum_{\theta\in\Irr(T^F)}\theta(s_0^{-1}x^{-1}sx)=\left\{
\begin{array}{cl} |T^F| & \quad \mbox{if $s_0=x^{-1}sx$},\\ 0 & \quad
\mbox{otherwise}.\end{array}\right.\]
So, if $s$ is not $G^F$-conjugate to $s_0$, then 
\[ \sum_{\theta\in\Irr(T^F)} \theta(s_0)^{-1}R_T^G(\theta)(g)=0,\]
as desired. On the other hand, if $s=s_0$, then $H=H_s$ and we obtain 
\[ \sum_{\theta\in\Irr(T^F)} \theta(s_0)^{-1}R_T^G(\theta)(g)=
\frac{1}{|H^F|} \sum_{x\in H^F} |T^F|Q_{xTx^{-1}}^H(u).\] 
Since $Q_{xTx^{-1}}^H=Q_T^H$ for all $x\in H^F$, this yields the desired
formula. 
\end{proof}

\begin{rema} \label{step1a} Let $s_0\in G^F$ and $H=C_G(s_0)$ be as in
\ref{step1}. For any $f\in \mbox{CF}(H^F)$ such that $\{h\in H^F\mid f(h)
\neq 0\} \subseteq H_{\text{uni}}^F$, we can uniquely define a class
function $\hat{f} \in\mbox{CF}(G^F)$ by the requirement that 
\begin{equation*}
\hat{f}(g)=\left\{\begin{array}{cl} f(u) & \qquad \mbox{if $s=s_0$},\\
0 & \qquad \mbox{if $s$ is not $G^F$-conjugate to $s_0$},\end{array}\right.
\tag{a}
\end{equation*}
where $g\in G^F$ and $g=su=us$ with $s\in G^F$ semisimple, $u\in G^F$ 
unipotent (see Lusztig \cite[p.~151]{L2}). Thus, Lemma~\ref{app3} means
that 
\begin{equation*}
\hat{Q}_T^H=\frac{1}{|T^F|}\sum_{\theta\in\Irr(T^F)} 
\theta(s_0)^{-1}R_T^G(\theta),\tag{b}
\end{equation*}
for any $F$-stable maximal torus $T\subseteq H$. Hence, we deduce that
\begin{equation*}
f\mbox{ uniform} \qquad \Rightarrow\qquad \hat{f} \mbox{ uniform}.\tag{c}
\end{equation*}
Indeed, if $f$ is uniform, then $f$ can be written as a linear combination
of $R_T^H(\theta)$ for various $T,\theta$. Since the restriction of 
$R_T^H(\theta)$ to $H_{\text{uni}}^F$ is the Green function $Q_T^H$, we 
can write $f$ as a linear combination of Green functions $Q_T^H$ for 
various~$T$. Clearly, the map $f\mapsto\hat{f}$ is linear. Hence, (b) 
implies that $\hat{f}$ is uniform; so (c) holds.
\end{rema}

\begin{cor} \label{step1b} Assume that the derived subgroup $\Gder$ of 
$G$ is simply-connected. Then Theorem~\ref{luconj} holds.
\end{cor}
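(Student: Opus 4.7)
The plan is to reduce to the unipotent case of Theorem~\ref{luconj} (already known from \cite{S1} and \cite[Prop.~1.3]{aver}) via the hat construction of Remark~\ref{step1a}. The hypothesis that $\Gder$ is simply-connected enters precisely to invoke Steinberg's theorem that the centralizer $C_G(s)$ of every semisimple $s\in G$ is connected, which is exactly the condition under which the framework of \ref{step1}, Lemma~\ref{app3}, and Remark~\ref{step1a} applies.

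Concretely, first I would pick a representative $g_0\in\cC^F$ (which is non-empty by a standard Lang-theorem argument applied to the $F$-action on $\cC$) and write $g_0=s_0u_0=u_0s_0$ for its Jordan decomposition, so that $s_0,u_0\in G^F$. Set $H:=C_G(s_0)$; by Steinberg's theorem and the simply-connectedness hypothesis, $H$ is connected, $F$-stable, and reductive. Let $\cO$ be the $H$-conjugacy class of $u_0$, an $F$-stable unipotent class of $H$, and let $f_\cO^H\in\mbox{CF}(H^F)$ be its characteristic function. The core claim is the identity
\[ f_\cC^G \;=\; \widehat{f_\cO^H} \]
in $\mbox{CF}(G^F)$, with the right-hand side defined by \ref{step1a}(a).

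To verify this identity I would evaluate both sides on an arbitrary $g=su=us\in G^F$. If the semisimple part $s$ is not $G^F$-conjugate to $s_0$, both sides vanish: the right-hand side by definition, and the left-hand side because every element of $\cC^F$ has semisimple part $G$-conjugate to $s_0$, which (using connectedness of $H$, equivalently triviality of $H^1(F, C_G(s_0)/C_G^\circ(s_0))$) forces $G^F$-conjugacy to $s_0$. Otherwise, after conjugating so that $s=s_0$, both sides equal $1$ if $u\in\cO^F$ and $0$ otherwise, by the standard Jordan-decomposition fact that the $G$-class of $s_0u$ is determined by the $H$-class of $u$. Once the identity is in hand, the unipotent case of Theorem~\ref{luconj} applied to $H$ tells us $f_\cO^H$ is uniform, and \ref{step1a}(c) immediately yields that $f_\cC^G=\widehat{f_\cO^H}$ is uniform. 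There is no substantive obstacle here: under the simply-connectedness hypothesis the corollary reduces to a one-line application of the preceding machinery, and the real work lies in the general (not necessarily simply-connected) case which is treated afterwards.
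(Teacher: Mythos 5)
Your proposal is correct and follows the paper's proof essentially verbatim: Steinberg's connectedness result reduces to the framework of \ref{step1}--\ref{step1a}, the class $\cO$ you take (the $H$-class of $u_0$) coincides with the paper's $\cC'=\{u\in H_{\text{uni}}\mid s_0u\in\cC\}$ by uniqueness of Jordan decomposition, and the identity $f_{\cC}^G=\widehat{f_{\cO}^H}$ together with the unipotent case from \cite[Prop.~1.3]{aver} and \ref{step1a}(c) finishes. You merely spell out the "one immediately checks" step a bit more fully than the paper does.
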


\begin{proof} The assumption implies that the centralizer of any
semisimple element is connected (see, e.g., \cite[3.5.6]{C2}). Hence,
we can apply the above discussion. Let $\cC$ be an $F$-stable conjugacy 
class of $G$ and $f_{\cC}^G \in \mbox{CF}(G^F)$ be the characteristic 
function of $\cC^F$. Let $g_0\in \cC^F$ and write $g_0=s_0u_0=u_0s_0$ 
where $s_0\in G^F$ is semisimple and $u_0\in G^F$ is unipotent. Let 
$H:=C_G(s_0)$ and set 
\[\cC':=\{u\in H_{\text{uni}} \mid s_0u\in \cC\}.\]
Then $\cC'$ is an $F$-stable unipotent class of $H$ and we denote by 
$f_{\cC'}^H\in\mbox{CF}(H^F)$ the characteristic function of $\cC'^F$. 
Let $f=f_{\cC'}^H$ and consider $\hat{f}$ as defined in \ref{step1a}(a).
One immediately checks that $\hat{f}=f_{\cC}^G$. Now, by 
\cite[Prop.~1.3]{aver}, $f=f_{\cC'}^H$ is uniform. Hence, \ref{step1a}(c) 
shows that $\hat{f}=f_{\cC}^G$ is uniform.
\end{proof}

The final step in the proof of Theorem~\ref{luconj} is to show that 
we can reduce it to the case where $\Gder$ is simply-connected. So let us 
now drop the assumption that $\Gder$ is simply-connected. Then we can find 
a surjective homomorphism of algebraic groups $\iota\colon G'\rightarrow G$
where
\begin{itemize}
\item $G'$ is connected reductive and $\Gder'$ is simply-connected,
\item the kernel $\ker(\iota)$ is contained in $Z(G')$ and is connected,
\item there is a Frobenius map $F'\colon G'\rightarrow G'$ such that $\iota
\circ F'=F\circ\iota$. 
\end{itemize}
(See \cite[p.~152]{L2} and \cite[1.7.13]{gema}.) Since
$\ker(\iota)$ is connected, a standard application of Lang's Theorem shows 
that $\iota$ restricts to a surjective map $G'^{F'}\rightarrow G^F$ which we 
denote again by~$\iota$. In this setting, we have:

\begin{prop}[Digne--Michel] \label{app2} Let $\iota\colon G'\rightarrow G$ 
be as above. Let $T\in\cT(G)$. Then $T'=\iota^{-1}(T)\in\cT(G')$ and we have:
\begin{itemize}
\item[(a)] $R_T^G(f')\circ \iota=R_{T'}^{G'}(f'\circ \iota)$ for any 
$f'\in \operatorname{CF}(T^F)$.
\item[(b)] $\Rst_T^G(f)\circ \iota=\Rst_{T'}^{G'}(f\circ \iota)$ for any 
$f\in \operatorname{CF}(G^F)$.
\end{itemize}
\end{prop}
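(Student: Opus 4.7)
The plan is to prove (a) geometrically by comparing the Deligne--Lusztig varieties for $G$ and $G'$, and then to deduce (b) by formal adjunction, using throughout the finite central subgroup $\Gamma:=\ker(\iota)^{F'}\subseteq Z(G'^{F'})$ and the fact (from Lang's Theorem on $\ker(\iota)$) that $1\to\Gamma\to G'^{F'}\to G^F\to 1$ is exact, and the analogous statement on $T$.

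For (a), I would fix an $F$-stable Borel $B\supseteq T$ with unipotent radical $U$ and work with the variety $Y:=\{x\in G\mid x^{-1}F(x)\in U\}$ in the style of \ref{abs22}, for which $R_T^G(\theta)$ is given by the standard trace formula. Set $B':=\iota^{-1}(B)$ and let $U'$ be its unipotent radical. The key geometric input is that $\iota|_{U'}\colon U'\to U$ is an isomorphism: $\ker(\iota)$ is a connected central torus, so meets the unipotent group $U'$ trivially, and a dimension count gives $\dim U'=\dim U$. Put $Y':=\{x'\in G'\mid x'^{-1}F'(x')\in U'\}$; then $\iota(Y')\subseteq Y$, and a short Lang's Theorem argument applied to $\ker(\iota)$ upgrades this to a surjection $\iota\colon Y'\twoheadrightarrow Y$. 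Moreover, the fibres of this map are exactly the orbits of $\Gamma$ acting freely on $Y'$ by right multiplication (freeness follows from $U'\cap\ker(\iota)=\{1\}$), so $Y=Y'/\Gamma$ as varieties carrying compatible left $G^F$- and right $T^F$-actions (with $T^F=T'^{F'}/\Gamma$). Passing to $\ell$-adic cohomology yields a $G^F\times T^F$-equivariant isomorphism $H_c^i(Y)\cong H_c^i(Y')^{\Gamma}$.

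Given this isomorphism, I would write out the trace formula for $R_{T'}^{G'}(\theta\circ\iota)(g')$ at $g'\in G'^{F'}$, partition the sum $\sum_{t'\in T'^{F'}}$ into $\Gamma$-cosets above each $t\in T^F$, and use that $\theta\circ\iota$ is constant on such cosets. The inner sum $\frac{1}{|\Gamma|}\sum_{\gamma\in\Gamma}\mbox{Trace}((g',t'\gamma),H_c^i(Y'))$ is then exactly $\mbox{Trace}((g',t')|_{H_c^i(Y')^{\Gamma}})$; via the equivariant identification above this equals $\mbox{Trace}((\iota(g'),\iota(t')),H_c^i(Y))$, and collecting terms recovers $R_T^G(\theta)(\iota(g'))$. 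This proves~(a) for characters of $T^F$ and hence on all of $\mbox{CF}(T^F)$ by linearity.

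For (b), note that $\iota^\ast\colon\mbox{CF}(G^F)\to\mbox{CF}(G'^{F'})$, $f\mapsto f\circ\iota$, is an isometric embedding onto the subspace $\mbox{CF}(G'^{F'})^{\Gamma}$ of $\Gamma$-invariants (and similarly for $\iota_T^\ast$ on~$T$). Applying (a) and adjunction, for every $f'\in\mbox{CF}(T^F)$,
\[
\langle\Rst_{T'}^{G'}(f\circ\iota),f'\circ\iota\rangle_{T'^{F'}}
=\langle f\circ\iota,R_{T'}^{G'}(f'\circ\iota)\rangle_{G'^{F'}}
=\langle f,R_T^G(f')\rangle_{G^F}
=\langle\Rst_T^G(f)\circ\iota,f'\circ\iota\rangle_{T'^{F'}},
\]
so the two class functions in~(b) have the same pairing with every element of the image of $\iota_T^\ast$. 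It remains to show that $\Rst_{T'}^{G'}(f\circ\iota)$ lies in this image, i.e., is $\Gamma$-invariant; this follows from the centrality of $\Gamma$ in $G'^{F'}$, which makes $R_{T'}^{G'}$ and hence $\Rst_{T'}^{G'}$ equivariant for left multiplication by $\Gamma$, together with the $\Gamma$-invariance of $f\circ\iota$. The main obstacle throughout is the geometric step in~(a): one must simultaneously verify the surjectivity $Y'\twoheadrightarrow Y$, the freeness of the $\Gamma$-action, and push the resulting quotient structure through $\ell$-adic cohomology with compact support; once that is done, the remaining trace manipulation and the adjunction argument for~(b) are routine.
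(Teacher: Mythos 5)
Your proof of (a) is the standard geometric argument that underlies the sources the paper cites (Digne--Michel~[13.22] and Bonnaf\'e~[\S 2]); it is correct in substance, with one small slip: the Borel $B\supseteq T$ you fix cannot in general be taken $F$-stable (only quasi-split tori lie in $F$-stable Borels), but the whole quotient argument $Y=Y'/\Gamma$ goes through unchanged for an arbitrary Borel $B\supseteq T$, since $Y$ is still stable under left $G^F$- and right $T^F$-multiplication because $T$ normalizes the unipotent radical $U$. Also, freeness of the right $\Gamma$-action is automatic; the point that needs $U'\cap\ker(\iota)=\{1\}$ is that the fibres of $Y'\to Y$ are exactly $\Gamma$-orbits.

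For (b) you take a genuinely different route from the paper. The paper's proof uses the explicit ``twisted'' character formula $\Rst_T^G(f)(t)=|T^F||H_t^F|^{-1}\sum_{u}Q_T^{H_t}(u)f(tu)$ from Digne--Michel~[12.2], feeds in $Q_T^G\circ\iota=Q_{T'}^{G'}$ (which is (a) with $\theta=1$), and matches terms using the bijection $H_{t',\text{uni}}^{F'}\cong H_{t,\text{uni}}^F$ and the equality of index factors. You instead argue formally: $\iota^\ast$ is an isometric embedding onto the $\Gamma$-invariants, the adjunction chain pins down $\Rst_{T'}^{G'}(f\circ\iota)$ against the image of $\iota_T^\ast$, and what remains is to check $\Gamma$-invariance of $\Rst_{T'}^{G'}(f\circ\iota)$; this indeed follows from the $\Gamma$-equivariance of $R_{T'}^{G'}$, which itself rests on the central-character relation $R_{T'}^{G'}(\theta')(\gamma g')=\theta'(\gamma)R_{T'}^{G'}(\theta')(g')$ for $\gamma\in\Gamma\subseteq Z(G')^{F'}$. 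Both routes are valid. Your approach avoids the explicit $\Rst$-character formula and is arguably more conceptual (and transparently extends to Levi subgroups beyond maximal tori), at the price of invoking the central-character property of Deligne--Lusztig characters; the paper's approach is more computational but entirely self-contained once (a) and the $\Rst$-formula are in place.
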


\begin{proof} (a) This is a special case of a much more general result;
see \cite[13.22]{DiMi2} and also Bonnaf\'e \cite[\S 2]{Bo2}. (I thank
Jean Michel for pointing out the latter reference.)

(b) We have the following character formula (see \cite[12.2]{DiMi2}):
\[ \Rst_T^G(f)(t)=|T^F||H_t^F|^{-1}\sum_{u\in H_{t,\text{uni}}^F} 
Q_T^{H_t}(u)f(tu)\qquad\mbox{for any $f\in\mbox{CF}(G^F)$}\]
where $t\in T^F$ and $H_t:=C_G^\circ(t)$. Now, applying (a) with the 
trivial character of $T^F$, we obtain $Q_T^G\circ \iota=Q_{T'}^{G'}$.
Let $t'\in  T'^{F'}$ and $t=\iota(t')\in T^F$. Let $H_{t'}:=C_{G'}^\circ
(t')$. Then $\iota$ induces a bijection $H_{t',\text{uni}}^{F'}
\stackrel{\sim}{\longrightarrow} H_{t,\text{uni}}^F$. Since also 
$|T^F||H_t^F|^{-1}=|T'^F||H_{t'}^{F'}|^{-1}$, we obtain (b) using the 
character formulae for $\Rst_T^G$ and for $\Rst_{T'}^{G'}$. (See also 
\cite[\S 2]{Bo2}.)
\end{proof}

Now we can complete the proof of Theorem~\ref{luconj} as follows. Let $\cC$ 
be an $F$-stable conjugacy class of~$G$. Then $\Sigma:=\iota^{-1}(\cC)$ 
is a union of $F$-stable conjugacy classes of $G'$. Let $f_\Sigma^{G'}
\in\mbox{CF}(G'^{F'})$ be the characteristic function of $\Sigma^F$. 
Then $f_{\cC}^G\circ \iota=f_{\Sigma}^{G'}$ is uniform by 
Corollary~\ref{step1b}. So, using Propositions~\ref{app1} and~\ref{app2},
we obtain:
\begin{align*}
f_{\cC}^{G}\circ \iota&=\frac{1}{|G'^{F'}|} \sum_{T'\in\cT(G')}|T'^{F'}|
(R_{T'}^{G'}\circ \Rst_{T'}^{G'})(f_{\cC}^G\circ \iota)\\
&=\frac{1}{|G^{F}|} \sum_{T\in \cT(G)}|T^{F}| (R_{T}^{G}\circ \Rst_{T}^{G})
(f_{\cC}^G)\circ \iota,
\end{align*}
where we used that the map $\cT(G)\rightarrow \cT(G')$, $T\mapsto T':=
\iota^{-1}(T)$, is a bijection, such that $|G'^{F'}|/|T'^{F'}|=|G^F|/|T^F|$
for all $T\in\cT(G)$. Hence, we conclude that
\[f_{\cC}^{G}=\frac{1}{|G^{F}|} \sum_{T\in \cT(G)}|T^{F}| (R_{T}^{G}
\circ \Rst_{T}^{G}) (f_{\cC}^G)\]
and so $f_{\cC}^G$ is uniform, by Proposition~\ref{app1}. Thus,
Theorem~\ref{luconj} is proved.\qed

%Since $\ker(\iota)$ is connected, a standard application of Lang's Theorem
%shows that $f_\Sigma^{G'}=f_{\cC}^G\circ \iota$. Hence, we conclude 
%that $f_{\cC}^G\circ \iota$ is uniform.

\bigskip
\noindent
{\bf Acknowledgements.} I thank Pham Huu Tiep for asking me for some
precisions about the proof of Proposition~\ref{brauer} as stated above, 
where no conditions on $p$, $q$ are required. I also thank Toshiaki Shoji 
for clarifying comments about the contents of \cite{S2}, \cite{S3}, 
\cite{S5} and the status of Theorems~\ref{thm3} and \ref{thm4}. Finally,
I thank the referees for a careful reading of the manuscript and many
useful suggestions.

%%%%%%%%%%%%%%%%%%%%%%%%%%%%%%%%%%%%%%%%%%%%%%%%%%%%%%%%%%%%%%%%%%%%%%%

\end{document}